\newcommand{\R}{\mathbb R}
\newcommand{\Z}{\mathbb Z}
\newcommand{\N}{\mathbb N}
\newcommand{\sgn}{\rm sign}
\newtheorem{theorem}{Theorem} [section]
\newtheorem{lemma}{Lemma} [section]
\newtheorem{proposition}{Proposition} [section]
\newtheorem{definition}{Definition} [section]
\newtheorem{remark}{Remark}[section]
\let\ssection=\section\renewcommand{\section}{\setcounter{equation}{0}\ssection}
\newcommand{\tendsto}[1]{\renewcommand{\arraystretch}{0.5}
\begin{array}[t]{c}
\longrightarrow \\
{ \scriptstyle #1 }
\end{array}
\renewcommand{\arraystretch}{1}}
\newcommand{\weaktendsto}[1]{\renewcommand{\arraystretch}{0.5}
\begin{array}[t]{c}
\rightharpoonup \\
{ \scriptstyle #1 }
\end{array}
\renewcommand{\arraystretch}{1}}
\newcommand{\norm}[2]{\big\| #1 \big\| _{#2}}
\newcommand{\cro}[1]{\langle #1 \rangle}
\title{The classical Boussinesq system revisited}
\author[Luc Molinet]{Luc Molinet$^1$}
\thanks{ $^1$Institut Denis Poisson, Universit\'e de Tours, Universit\'e d'Orl\'eans, CNRS, Parc Grandmont, 37200, France ({\tt  luc.molinet@univ-tours.fr}).}
\author[Raafat Talhouk]{Raafat Talhouk$^2$}  
\author[Ibtissam Zaiter]{Ibtissam Zaiter$^2$}
\thanks{ $^2$Laboratoire de math\'ematiques-EDST, Facult\'e des Sciences et EDST, Universit\'e Libanaise, Hadat, Liban 
{\tt (rtalhouk@ul.edu.lb), (zibtissame@hotmail.com)}}
\begin{document}
\subjclass[2010]{35Q35,35L56,35B30} 
\keywords{Boussinesq system, global existence, entropy solution, fractal regularization.}

\maketitle
\begin{abstract}
In this work, we revisit the study by M. E. Schonbek \cite{Sch} concerning the problem of existence of global entropic weak solutions for the classical Boussinesq system, as well as the study of the regularity of these solutions by C. J. Amick \cite {Ami}. We propose to regularize by  a "fractal" operator (i.e. a differential operator defined by a Fourier multiplier of type $\epsilon  | \xi | ^ {\lambda}, \,
 (\epsilon,\lambda) \in\,\R_+\times ] 0,2] $). We first show that the regularized system is globally unconditionally well-posed in Sobolev spaces of type $ H^s (\R), \, s> \frac {1} {2}, $ uniformly in the regularizing parameters $(\epsilon,\lambda) \in\,\R_+\times ] 0,2] $. As a consequence we obtain the   global well-posedness of the classical Boussinesq system at this level of regularity as well as the convergence in the strong topology of the solution of the regularized system towards the solution of the classical Boussinesq equation as the parameter $ \epsilon $ goes to $ 0 $.
 In a second time, we prove  the existence of low regularity entropic solutions  of the Boussinesq equations emanating from $ u_0 \in H^1 $  and $ \zeta_0 $ in an Orlicz class   as weak limits of  regular solutions. 
\end{abstract}
\section{Introduction}
In this paper we are concerned with the classical Boussinesq system, introduced by J. V. Boussinesq in 1871 to describe weak amplitude long wave propagation on the surface of ideal incompressible liquid for irrotational flow submitted to  gravitational force where the surface tension has been neglected. In 2002, Bona, Chen and Saut \cite{BCS02} have derived a class
of models called four parameters Boussinesq systems. The corresponding PDE's system is given by:

\begin{eqnarray}\label{abcd}
\left\{
\begin{array}{lcl}
\zeta_t+u_x+(u\zeta)_x + au_{xxx}-b\zeta_{xxt}&=&0,\\
u_t+\zeta_x+uu_x+c\zeta_{xxx}-du_{xxt}&=&0.
\end{array} \right.
\end{eqnarray}

$\zeta(x,t)+1$ corrrespond to the normalized total height of the liquid and then describe the free surface of the liquid, $x$ is the spatial position which is proportional to distance in the direction of propagation. $u(x,t)$ is the horizontal velocity field of the liquid particle which is at position $x$ at time $t.$ $a,b,c$ and $d$ are  four parameters verifying consistence relation (see \cite{BCS02}). The classical Boussinesq system corresponds to the choice of parameters: $a=b=c=0$ and $d=1$ and the system becomes:
 
\begin{eqnarray}\label{1}
\left\{
\begin{array}{lcl}
\zeta_t+u_x+(u\zeta)_x &=&0,\\
u_t+\zeta_x+uu_x-u_{xxt}&=&0.
\end{array} \right.
\end{eqnarray}

Schonbek (in \cite{Sch}) have shown the existence of global in time weak solution under a natural non-cavitation condition ($1+\zeta_0>0$) with initial data $\zeta_0$ in some Orlicz class and $u_0\in H^1(\R).$ She used a viscosity method by regularizing the first equation with the Laplace operator after what a uniform entropic estimate is established. This entropic estimate allowed to passing to the limit and defining a weak solution for the classical Boussinesq system. Amick (in \cite{Ami}) showed that weak solutions given by Schonbek are in fact infinitely regular, i.e. in $C_0^{\infty}$ if the initial data are $C^{\infty}_c.$ Actually the results of Amick are implicitly containing also that the entropic solution is in $H^k$  if the initial data are in classical regular spaces of type $ \bold {H^k \times H^{k+1}, \forall k \in \N,\, k\geq 2}.$ Bona \& all. (in \cite{BCS04}) studied many cases of giving $a,\,b,\, c,\, d$ parameters
and in particular concerning system (\ref{1}) they give, without proof, existence and uniqueness results of solution  $ (\zeta,u)\in C([0,T]; H^s\times H^{s+1})$ for given initial data in $H^s\times H^{s+1},\; s\geq1$ with $\inf_{x\in\R}(1+\zeta_0(x))>0$ and announcing the continuity of the flow on more restricted class of initial data. All the previous studies are in one dimension, many other studies of the four parameters Boussinesq system in the last ten years concerning the two dimensional case, see for instance \cite{SWX} and references therein.  

In our work we reconsider the method of regularization by using generalized derivative operator, also called "fractal" operator, that is a  differential operator defined by a Fourier multiplier of type $ | \xi | ^ {\lambda}, \, \lambda \in ]0,2]. $ More precisely we consider the following regularized system:

\begin{eqnarray}\label{2}
\left\{
\begin{array}{lcl}
\zeta_t+u_x+(u\zeta)_x+ \epsilon g_\lambda(\zeta) &=&0,\\
u_t+\zeta_x+uu_x-u_{xxt}&=&0.
\end{array} \right.
\end{eqnarray}
where $g$ is the non-local operator defined through the Fourier transform by
\begin{equation}\label{op}
 \mathcal{F}(g[\varphi(t,\cdot)])(\xi)= |\xi|^\lambda \mathcal{F}(\varphi(t,\cdot))(\xi),  \;\; \mbox{ with } \lambda \in ]0,2].
 \end{equation}

We show that this system is locally  in time unconditionally well posed in $H^s \times H^{s+1}$ for $s> \frac{1}{2}$ uniformly with respect to the parameter $ \epsilon\ge 0 $ and $ \lambda>0 $. In particular we get the convergence in 
$ C([0,T]; H^s\times H^{s+1}) $  of the solutions to \eqref{2} towards the solutions to the Boussinesq equation \eqref{1} as the parameter $ \epsilon $ tends to $ 0 $. Then we prove that the analysis of Schonbek to establish the entropic estimate still work for \eqref{2} so that we can extend our solutions  for all positive times. Finally we prove that 
the low regularity entropic solutions  of the Boussinesq equation with $ u_0 \in H^1 $  and $ \xi $ in an Orlicz class can also be obtained as limits of regular solutions by regularizing the initial datas and using our main convergence results. We prove also the continuity of the flow map. All the previous results are obtained only under the non zero-depth condition $1+\zeta_0>0.$
 \subsection{Statement of the main results}
 \begin{definition}\label{def} Let $ s>1/2 $ and $T>0$ . We will say that $(\zeta,u)\in L^\infty(]0,T[;H^s\times H^{s+1}) $ is a solution to \eqref{2} associated with the initial datum $ (\zeta_0,u_0) \in H^s(\R)\times H^{s+1}(\R) $  if
  $ (\zeta,u) $ satisfies \eqref{2}   in the distributional sense, i.e. for any test function $ \psi\in C_c^\infty(]-T,T[\times \R) $,  it holds
  \begin{equation}\label{weak}
  \left\{ 
  \begin{array}{l}
  \int_0^\infty \int_{\R} \Bigl[(\psi_t +\psi_x+\epsilon g_\lambda(\psi))\zeta  +  \psi_x(\zeta u) \Bigr] \, dx \, dt +\int_{\R} \psi(0,\cdot) \zeta_0 \, dx =0\\
   \int_0^\infty \int_{\R} \Bigl[(\psi_t -\psi_{txx}+\psi_x)u   +  \psi_x  u^2/2 \Bigr] \, dx \, dt +\int_{\R} \psi(0,\cdot) u_0 \, dx =0
  \end{array}
  \right.
  \end{equation}
 \end{definition}
 \begin{remark} \label{rem2} Note that $ H^s(\R) $ is an algebra for $ s>1/2 $ and thus  $ \zeta u $ and $ u^2 $ are well-defined and belong to $L^\infty(]0,T[\, ;H^s(\R) $.   Moreover, $g_\lambda(\zeta)\in L^\infty(]0,T[\, ;H^{s-\lambda})$. 
  Therefore \eqref{weak} forces $ (\zeta_t,u_t)  \in L^\infty(]0,T[ \, ; H^{s-2}(\R)\times H^{s+1}) $  and  thus \eqref{2} is satisfied in $L^\infty(]0,T[ \,; H^{s-2}(\R)\times H^{s+1}) $. In particular, $ (\zeta,u)\in C([0,T] \, ; H^{s-2}(\R)\times H^{s+1})$ and \eqref{weak} forces 
 $ (\zeta(0),u(0))=(\zeta_0,u_0)$.
  \end{remark}
 \begin{definition} Let  $ s>1/2$. We will say that the Cauchy problem associated with \eqref{2} is unconditionally globally well-posed in $ H^s(\R )\times H^{s+1}(\R) $ if for any initial data $ (\zeta_0,u_0)\in H^s( {\R})\times H^{s+1}(\R)  $ there exists a solution
 $ (\zeta,u) \in C(\R_+\, ; H^s(\R)\times H^{s+1}(\R) ) $ to \eqref{2} emanating from $ (\zeta_0,u_0) $. Moreover, for  $T>0,$\; $(\zeta,u) $ is the unique solution to  \eqref{2} associated with $ (\zeta_0,u_0) $ that belongs to  $ L^\infty(]0,T[\, ;H^s(\R)\times H^{s+1}(\R) )$. Finally, for any $ T>0 $, the solution-map
  $(\zeta_0, u_0) \mapsto (\zeta,u) $ is continuous from  $ H^s(\R)\times H^{s+1}(\R) $    into $C([0,T]\,; H^s(\R)\times H^{s+1}(\R)) $.
 \end{definition}

 \begin{theorem}\label{maintheorem}
 For any $ \epsilon\ge 0 $, $ \lambda\in ]0,2]$ and any $ s>1/2 $, the Cauchy problem (\ref{2})  is unconditionally globally well-posed in $H^s(\R)\times H^{s+1}(\R)$.
 
 Moreover, denoting by $ (\zeta^{\epsilon,\lambda},u^{\epsilon,\lambda})$ the solution  to \eqref{2} emanating from 
 $(\zeta_0,u_0)\in H^s(\R) \times H^{s+1}(\R) $, for any $ T>0$ it holds 
 \begin{equation}\label{conv}
 (\zeta^{\epsilon,\lambda},u^{\epsilon,\lambda})\tendsto{\epsilon\to 0 }  (\zeta,
 u)
 \text{ in } C([0,T],H^{s}(\R)\times H^{s+1}(\R)) \; .
 \end{equation}
 where $ (\zeta,u) $ denotes  the solution to \eqref{1} emanating from  $(\zeta_0,u_0)$.
 \end{theorem}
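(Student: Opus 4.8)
The plan is to prove local well-posedness with a time of existence depending only on the norms of the data (uniformly in $\epsilon \ge 0$ and $\lambda \in ]0,2]$), then globalize using the entropic estimate, and finally pass to the limit $\epsilon \to 0$.

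First, I would reformulate the system in a form amenable to a fixed point argument. The second equation can be rewritten by applying $(1-\partial_x^2)^{-1}$, so that
\begin{equation}\label{plan-u}
u_t + (1-\partial_x^2)^{-1}\partial_x\bigl(\zeta + u^2/2\bigr) = 0,
\end{equation}
which is an equation with a smoothing nonlinearity (the operator $(1-\partial_x^2)^{-1}\partial_x$ is bounded on $H^{s+1}$ since $H^s$ is an algebra). The first equation
\begin{equation}\label{plan-zeta}
\zeta_t + \epsilon g_\lambda(\zeta) = -u_x - (u\zeta)_x
\end{equation}
is a linear transport-type equation in $\zeta$ with the dissipative term $\epsilon g_\lambda$; since $\lambda \le 2$ this term contributes a good sign but I would not rely on it for existence — instead treat $\epsilon g_\lambda(\zeta)$ as generating a contraction semigroup $S_\lambda^\epsilon(t) = e^{-\epsilon t |\xi|^\lambda}$ that is bounded by $1$ on every $H^\sigma$, so the Duhamel formula $\zeta(t) = S_\lambda^\epsilon(t)\zeta_0 - \int_0^t S_\lambda^\epsilon(t-\tau)\partial_x(u + u\zeta)(\tau)\,d\tau$ loses one derivative on $\zeta$ from the $\partial_x$ but this is compensated because $u$ lives in $H^{s+1}$. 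I would set up the Picard iteration in the space $C([0,T]; H^s \times H^{s+1})$; the estimates are uniform in $(\epsilon,\lambda)$ precisely because $\|S_\lambda^\epsilon(t)\|_{H^\sigma \to H^\sigma} \le 1$. This gives existence on a time interval $[0,T_0]$ with $T_0 = T_0(\|\zeta_0\|_{H^s}, \|u_0\|_{H^{s+1}})$, together with uniqueness in $L^\infty(]0,T[; H^s\times H^{s+1})$ and Lipschitz continuous dependence (hence continuity of the flow map), by the usual difference estimates.

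Next, I would establish the a priori bound needed for globalization. Following Schonbek, I expect the key quantity to be an energy/entropy functional — something like $\int_\R \bigl( (1+\zeta)\,[(1+\zeta)\log(1+\zeta) - (1+\zeta) + 1] + \tfrac12(1+\zeta)u^2 + \tfrac12 u_x^2 \bigr)\,dx$ or a close variant — which one checks is (nonincreasing, using that $\epsilon g_\lambda$ has the right sign, or at worst) controlled on any finite time interval, and which, combined with the non-cavitation condition $1+\zeta_0 > 0$ propagated by the transport structure of the first equation, bounds $\|\zeta(t)\|_{L^2}$ and $\|u(t)\|_{H^1}$. One then upgrades to an $H^s \times H^{s+1}$ bound by a Gronwall argument on the $H^s$-energy: differentiating the equations, applying $\Lambda^s$, and using the commutator (Kato–Ponce) estimates together with the algebra property, the highest-order terms close because the worst term $\Lambda^s\partial_x(u\zeta)$ pairs against $\Lambda^s\zeta$ and, after the commutator is extracted, is controlled by $\|u_x\|_{L^\infty}\|\zeta\|_{H^s}^2$ plus lower-order terms. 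Since the resulting differential inequality has coefficients depending only on quantities already globally bounded, no finite-time blow-up occurs, and the local solution extends to $C(\R_+; H^s\times H^{s+1})$. I would emphasize that every constant in this step is independent of $\epsilon$ and $\lambda$.

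Finally, for the convergence \eqref{conv}: fix $(\zeta_0,u_0) \in H^s \times H^{s+1}$ and $T>0$. By the uniform bounds, $(\zeta^{\epsilon,\lambda}, u^{\epsilon,\lambda})$ is bounded in $C([0,T]; H^s \times H^{s+1})$ independently of $\epsilon$. Write the difference $(\delta\zeta, \delta u) = (\zeta^{\epsilon,\lambda} - \zeta, u^{\epsilon,\lambda} - u)$ with $(\zeta,u)$ the solution of \eqref{1} (which is the case $\epsilon = 0$ of the already-proven well-posedness). Subtracting the two systems, the difference solves a linear system with the same structure plus a forcing term $-\epsilon g_\lambda(\zeta^{\epsilon,\lambda})$ in the first equation. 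Estimating in $H^{s-\lambda} \times H^{s+1}$ — or, to avoid the $\lambda$-dependence of the space, in $H^{s-2}\times H^{s+1}$ using Remark~\ref{rem2} — one gets by Gronwall a bound $\|(\delta\zeta,\delta u)(t)\|_{H^{s-2}\times H^{s+1}} \lesssim \epsilon T \sup_{[0,T]} \|g_\lambda(\zeta^{\epsilon,\lambda})\|_{H^{s-2}} e^{CT} \lesssim \epsilon\, T\, e^{CT} \sup_{[0,T]}\|\zeta^{\epsilon,\lambda}\|_{H^s} \to 0$, since $\|g_\lambda \varphi\|_{H^{s-2}} \le \|\varphi\|_{H^s}$ for $\lambda \le 2$. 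This gives convergence in a weaker topology; to upgrade to $C([0,T]; H^s\times H^{s+1})$ I would combine this with the uniform $H^s\times H^{s+1}$ bound by interpolation, or, more cleanly, run the $H^s$-energy estimate directly on the difference and use that the bad term $\epsilon g_\lambda(\zeta^{\epsilon,\lambda})$, paired against $\Lambda^s \delta\zeta$, can be handled after one integration by parts as $\epsilon\int \Lambda^{s}|\partial_x|^{\lambda/2}\zeta^{\epsilon,\lambda}\,\Lambda^s|\partial_x|^{\lambda/2}\delta\zeta \le \epsilon(\text{bounded}) + \text{(something absorbable)}$ together with the good dissipative term on $\delta\zeta$ — concluding again by Gronwall. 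The main obstacle I anticipate is this last upgrade: reconciling the loss of derivatives in the first equation (the $\partial_x$ on the nonlinearity, not compensated within $H^s$ itself) with the need for convergence in the full-strength norm, which is exactly where one must exploit either the smoothing in the $u$-equation feeding back through $\zeta u$, or the dissipative sign of $\epsilon g_\lambda$ on the difference, carefully enough that no constant degenerates as $\lambda \downarrow 0$.
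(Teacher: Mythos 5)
There are three genuine gaps, the first two of which are fatal to the proposal as written. First, the Picard iteration in $C([0,T];H^s\times H^{s+1})$ does not close: in the first equation the nonlinearity $\partial_x(u\zeta)$ contains the transport term $u\zeta_x$, and with $u\in H^{s+1}$, $\zeta\in H^s$ one only gets $u\zeta\in H^s$, hence $\partial_x(u\zeta)\in H^{s-1}$ --- a net loss of one derivative on $\zeta$ that the semigroup $e^{-\epsilon t|\xi|^\lambda}$ cannot recover uniformly in $\epsilon$ (at $\epsilon=0$ there is no smoothing at all). The $\zeta$-equation is genuinely quasilinear, and in particular the flow map cannot be Lipschitz on $H^s\times H^{s+1}$; your claimed ``Lipschitz continuous dependence'' is a symptom of this problem. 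This is exactly why the paper does not use Duhamel at all: it adds a Bona--Smith regularization $-\mu\zeta_{txx}$, solves the regularized system by Cauchy--Lipschitz, derives $\mu$-uniform energy estimates in which the derivative falling on $\zeta$ at top order is absorbed by integration by parts plus the commutator estimate \eqref{cm1}, closes the \emph{difference} estimate only in the weaker norm $H^{s-1}\times H^s$ (which is where unconditional uniqueness lives), and then upgrades to continuity in $H^s\times H^{s+1}$ by the Bona--Smith frequency-truncation argument.

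Second, your globalization step asserts that the $H^s$-energy Gronwall inequality has ``coefficients depending only on quantities already globally bounded,'' but the entropy functional only controls $\|u\|_{H^1}$ and $\int_\R\sigma_0(1+\zeta)\,dx$; since $\sigma_0(1+\zeta)\sim\zeta\ln\zeta$ for large $\zeta$, this does not even bound $\|\zeta\|_{L^2}$, let alone the quantities $\|u_x\|_{L^\infty}$ and $\|\zeta\|_{L^\infty}$ that actually appear as coefficients in the $H^s$ blow-up criterion \eqref{exp}. The missing ingredient is Amick's bootstrap (Proposition \ref{prop1} of the paper): one multiplies the first equation by $\zeta^N$ for odd $N$, uses the representation $u_{tx}=\zeta+(\text{terms controlled by the entropy})$ to bound $\int_0^t\int\zeta^N u_x$ and $\int_0^t\int\zeta^{N+1}u_x$, obtains $\sup_t\|\zeta(t)\|_{L^{N+1}}\le C_T$ with $C_T$ independent of $N$, and lets $N\to\infty$ to get $\|\zeta\|_{L^\infty}$, from which $\|u_x\|_{L^\infty}$ follows. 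Third, and less severely, your passage to the limit $\epsilon\to 0$ in the \emph{full} $H^s\times H^{s+1}$ norm is not achieved by interpolating the $H^{s-2}$ convergence with the uniform $H^s$ bound (that only yields $H^{s'}$, $s'<s$), and the direct $H^s$ difference estimate reintroduces the term $(v\zeta_2)_x$ requiring $\zeta_2\in H^{s+1}$; the paper resolves this by the same Bona--Smith triangle-inequality device, comparing both solutions to their frequency-truncated approximants uniformly in $\epsilon$. You correctly identify this last point as the obstacle, but the fixes you sketch do not overcome it.
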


 \section{Notations and preliminary}
 \subsection{Notations and function spaces.}
 In the following, $C$ denotes any nonnegative constant whose exact expression is of no importance. The notation $a\lesssim b$ means that $a\leq C_0 b$.\\
 We denote by $C(\lambda_1, \lambda_2,\dots)$ a nonnegative constant depending on the parameters
 $\lambda_1$, $\lambda_2$,\dots and whose dependence on the $\lambda_j$ is always assumed to be nondecreasing.\\
 Let $p$ be any constant with $1\leq p< \infty$ and denote $L^p=L^p(\R)$ the space of all Lebesgue-measurable functions
 $f$ with the standard norm $$\vert f \vert_{L^p}=\big(\int_{\R}\vert f(x)\vert^p dx\big)^{1/p}<\infty.$$ When $p=2$,
 we denote the norm $\vert\cdot\vert_{L^2}$ simply by $\vert\cdot\vert_2$. The real inner product of any functions $f_1$
 and $f_2$ in the Hilbert space $L^2(\R)$ is denoted by
\[(f_1,f_2)=\int_{\R}f_1(x)f_2(x) dx.
 \]
 The space $L^\infty=L^\infty(\R)$ consists of all essentially bounded, Lebesgue-measurable functions
 $f$ with the norm
\[
 \vert f\vert_{\infty}= \hbox{ess}\sup \vert f(x)\vert<\infty.
\]
 We denote by $W^{1,\infty}=W^{1,\infty}(\R)=\{f, \partial_x f\in L^{\infty}\}$ endowed with its canonical norm. For convenience, we denote the norm of $L^\infty(\R_+^*\times \R)$ by $\|\cdot\|_{L^\infty_{t,x}}.$\\

For any real constant $s\geq0$, $H^s=H^s(\R)$ denotes the Sobolev space of all tempered
 distributions $f$ with the norm $\vert f\vert_{H^s}=\vert \Lambda^s f\vert_2 < \infty$, where $\Lambda$
 is the pseudo-differential operator $\Lambda=(1-\partial_x^2)^{1/2}$.\\
 For any functions $u=u(t,x)$ and $v(t,x)$ 
 defined on $ [0,T)\times \R$ with $T>0$, we denote the inner product, the $L^p$-norm and especially
 the $L^2$-norm, as well as the Sobolev norm,
 with respect to the spatial variable $x$, by $(u,v)=(u(t,\cdot),v(t,\cdot))$, $\vert u \vert_{L^p}=\vert u(t,\cdot)\vert_{L^p}$,
 $\vert u \vert_{L^2}=\vert u(t,\cdot)\vert_{L^2}$ , and $ \vert u \vert_{H^s}=\vert u(t,\cdot)\vert_{H^s}$, respectively.\\
For $(X,\|\cdot\|_X)$ a Banach space, we denote as usually $L^p(]0,T[;X)$, $1\leq p\leq +\infty $,  the space of mesurable functions equipped by the norm:
 \[\big\Vert u\big\Vert_{L^p_TX} \ =\left( \displaystyle \int_0^T\big\Vert u(t,\cdot)\big\Vert_{X}^p \right) ^{1/p}  \hbox{ for }1\leq p<+\infty,\]
and
 \[\big\Vert u\big\Vert_{L^\infty_T X} \ = \ \hbox{ess}\sup_{t\in]0,T[}\Vert u(t,\cdot)\Vert_{X}\;\; \hbox{ for } p=+\infty  \ .\]
Finally, $C^k([0,T]; X)$  is the space of  $k$-times continuously differentiable functions from $[0,T]$ with value in $X,$ equipped with its standard norm \[\big\Vert u\big\Vert_{C^k([0,T];X)} \ = \ \max_{0\leq l\leq k} \sup_{t\in[0,T]}\vert u^{(l)}(t,\cdot)\vert_{X} \ .\] 
 Let $C^k(\R)$ denote the space of
 $k$-times continuously differentiable functions.  \\
 For any closed operator $T$ defined on a Banach space $X$ of functions, the commutator $[T,f]$ is defined
  by $[T,f]g=T(fg)-fT(g)$ with $f$, $g$ and $fg$ belonging to the domain of $T$. 
Throughout the paper, we fix a smooth cutoff function $\eta$ such that
\begin{displaymath}
\eta \in C_0^{\infty}(\mathbb R), \quad 0 \le \eta \le 1, \quad
\eta_{|_{[-1,1]}}=1 \quad \mbox{and} \quad  \mbox{supp}(\eta)
\subset [-2,2].
\end{displaymath}

We set  $ \phi(\xi):=\eta(\xi)-\eta(2\xi) $. For $l \in \mathbb \N\setminus\{0\}$, we define
\begin{equation}\label{defphi}
\phi_{2^l}(\xi):=\phi(2^{-l}\xi) .
\end{equation}
Any summations over  $N $ or $ K$  are presumed to be dyadic i.e. $ N$ and $ K$   range over numbers of the form $\{2^k : k\in\mathbb \Z\}$.  Then, we have that
{
\begin{displaymath}
\sum_{N>0 }\phi_N(\xi)=1\quad \forall \xi\in \R^* 
\; .
\end{displaymath}
Let us define the Littlewood-Paley multipliers by
\begin{displaymath}
P_Nu=\mathcal{F}^{-1}_x\big(\phi_N\mathcal{F}_xu\big), \quad
\tilde{P}_N u=(P_{2^{-1}N}+P_N+P_{2N})u \; 
\end{displaymath}
\begin{displaymath}
P_{\gtrsim N} :=\sum_{K\gtrsim N} P_K 
\; \text{and} \; 
P_{\ll N}:=\sum_{K \ll N} P_{K}
\end{displaymath}

\subsection{Some preliminary estimates.} The following product and commutator estimates will be used intensively throughout the paper. 
\begin{proposition}\label{propcom} Let  $ N>0 $ then 
\begin{equation}\label{cm1} |[P_N,P_{\ll N} f]g_x|_{L^2}  \lesssim |f_x|_{L^\infty}  |\tilde{P}_N g|_{L^2},
\end{equation}
\end{proposition}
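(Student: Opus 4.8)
The plan is to exploit the frequency localization built into the commutator. Write $P_N$ acting on the product $(P_{\ll N}f)\,g_x$. Since $P_{\ll N}f$ has frequencies of size $\ll N$ and $P_N$ restricts the output to frequencies of size $\sim N$, only those dyadic pieces of $g_x$ whose frequency is comparable to $N$ can contribute; this is the standard "high-high$\to$high is impossible here, only high$\cdot$low$\to$high survives" heuristic. Concretely I would first reduce $g_x$ to $\tilde P_N g_x$ up to an acceptable error: one checks that $P_N\bigl((P_{\ll N}f)\,P_K g_x\bigr)=0$ unless $K\sim N$, using that the Fourier support of $P_{\ll N}f$ lies in $\{|\xi|\le 2^{-c}N\}$ for a fixed $c$ (coming from the definition of $\phi$ and the cutoff $\eta$), and adjusting the constant in $P_{\ll N}$ if needed so that the low frequencies are genuinely small compared to $N$. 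Hence $[P_N,P_{\ll N}f]g_x = [P_N,P_{\ll N}f]\,\tilde P_N g_x$, and it suffices to bound this.

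Next I would pass to physical space via the kernel of $P_N$. Writing $P_N h(x)=\int 2^{?}\check\phi\bigl(2^{?}(x-y)\bigr)h(y)\,dy$ — more precisely, with $K_N(x)=N\,\check\phi(Nx)$ so that $\|K_N\|_{L^1}=\|\check\phi\|_{L^1}$ is independent of $N$ and $\int y K_N(y)\,dy$ scales like $N^{-1}$ — one represents
\[
[P_N,P_{\ll N}f]\bigl(\tilde P_N g_x\bigr)(x)=\int K_N(x-y)\bigl[(P_{\ll N}f)(y)-(P_{\ll N}f)(x)\bigr]\,(\tilde P_N g_x)(y)\,dy .
\]
Then I would Taylor expand: $(P_{\ll N}f)(y)-(P_{\ll N}f)(x)=(y-x)\int_0^1 (P_{\ll N}f)_x\bigl(x+\theta(y-x)\bigr)\,d\theta$, so the bracket is bounded by $|x-y|\,\|(P_{\ll N}f)_x\|_{L^\infty}\le |x-y|\,\|f_x\|_{L^\infty}$ (since $P_{\ll N}$ is bounded on $L^\infty$ uniformly, or one keeps $\|(P_{\ll N}f)_x\|_{L^\infty}$ and notes it is $\lesssim |f_x|_{L^\infty}$). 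This converts the commutator into a convolution with the kernel $|x-y|\,|K_N(x-y)|$ times $\tilde P_N g_x$.

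To close, I would absorb the factor $|x-y|$ into the kernel: $\bigl\| |\cdot|\,K_N \bigr\|_{L^1}= N^{-1}\bigl\| |\cdot|\,\check\phi \bigr\|_{L^1}\lesssim N^{-1}$, using that $\check\phi$ is Schwartz. Young's inequality then gives
\[
\bigl|[P_N,P_{\ll N}f]g_x\bigr|_{L^2}\lesssim N^{-1}\,|f_x|_{L^\infty}\,\bigl|\tilde P_N g_x\bigr|_{L^2}.
\]
Finally, since $\tilde P_N g_x$ has frequency $\sim N$, the Bernstein-type bound $|\tilde P_N g_x|_{L^2}\lesssim N\,|\tilde P_N g|_{L^2}$ holds (possibly after enlarging $\tilde P_N$ by one more dyadic block, which is harmless), and the factor $N$ cancels the $N^{-1}$, yielding $|[P_N,P_{\ll N}f]g_x|_{L^2}\lesssim |f_x|_{L^\infty}\,|\tilde P_N g|_{L^2}$ as claimed. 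The only mildly delicate point is bookkeeping the dyadic constants so that "$\ll N$" is small enough to force the frequency-comparability $K\sim N$ and so that $\tilde P_N$ (as defined in the paper, $P_{2^{-1}N}+P_N+P_{2N}$) really does capture all surviving pieces of $g_x$; everything else is Young's inequality plus the Schwartz decay of $\check\phi$.
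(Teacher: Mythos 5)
Your proposal is correct and follows essentially the same route as the paper's own proof: reduce $g_x$ to $\tilde P_N g_x$ by frequency support, represent the commutator in physical space with the kernel of $P_N$, apply the mean-value theorem to extract $|x-y|\,\|f_x\|_{L^\infty}$, and conclude with Young's convolution inequality (gaining $N^{-1}$) plus Bernstein (losing $N$). The only cosmetic difference is that the paper writes the weighted kernel as $|{\mathcal F}^{-1}_x(\varphi')(N\cdot)|$ rather than invoking the Schwartz decay of $\check\phi$ directly; the substance is identical.
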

We give a short proof of \eqref{cm1} in the appendix for sake of completness.

We will also need the two following product estimates in Sobolev spaces :
\begin{enumerate}
\item 
  For every $  p,r,t $ such that $ r+p-t>1/2 $ and $r,p\ge t  $,
\begin{equation}
\|fg\|_{H^t(\R)} \lesssim \|f \|_{H^p(\R)} \|g\|_{H^r(\R)}
\; . \label{prod2}
\end{equation}
\item For any $ s\ge 0 $ 
\begin{equation}
\|fg\|_{H^s(\R)} \lesssim \|f \|_{L^\infty} \|g\|_{H^s(\R)}+  \|f\|_{H^s(\R)}\|g \|_{L^\infty}
\; . \label{prod3}
\end{equation}
\end{enumerate}
Inequality (\ref{prod2}) is a standart Sobolev product estimate, the second one (\ref{prod3}) is the well known Moser product estimate (see for instance \cite{Taylor91} or  \cite{Lanlivre13}, and references therein.)
With \eqref{prod2}-\eqref{prod3}  in hand, it is straightforward (see Appendix) to prove the two following  frequency localized product estimates  given in proposition (\ref{propproduit}) that we will extensively use in the next section.
\begin{proposition}\label{propproduit}
For any $ N>0 $ and $ s>0 $  it holds 
\begin{equation}
N^s |P_N ( P_{\gtrsim N} f \, g_x)|_{L^2} \lesssim \delta_N \min \Bigl( |f|_{H^{s+1}} |g|_{L^\infty}, |f|_{H^s} |g_x|_{L^\infty}\Bigr) \label{proN1}
\end{equation}
 whereas for $ s>1/2 $ it holds 
\begin{equation}
N^{s-1} |P_N ( P_{\gtrsim N} f g_x)|_{L^2} \lesssim \delta_N |f|_{H^{s+1}} |g|_{H^{s-1}}\label{proN2}
\end{equation}
with $ |(\delta_{2^j})_{j\ge 0}|_{l^2}\le 1 $. 
\end{proposition}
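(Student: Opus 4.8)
The plan is to reduce Proposition~\ref{propproduit} to the product estimates \eqref{prod2}--\eqref{prod3} through a Littlewood--Paley decomposition of the factor $f$. First I would observe that $P_N(P_{\gtrsim N} f \, g_x)$ only sees the frequencies of $P_{\gtrsim N} f$ that are $\gtrsim N$, so writing $P_{\gtrsim N} f = \sum_{M\gtrsim N} P_M f$ one has, by Bernstein and the almost-orthogonality of the Littlewood--Paley pieces, that $|P_N(P_{\gtrsim N} f \, g_x)|_{L^2}$ is controlled by a suitable $\ell^2$-type sum of $|P_M f|_{L^2} \, |g_x|_{L^\infty}$ (or, distributing one derivative differently, of $M |P_M f|_{L^2}\,|g|_{L^\infty}$) over $M\gtrsim N$. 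Setting $\delta_N := \bigl( \sum_{M\gtrsim N} (M/N)^{-2\sigma} (M^{s+1}|P_M f|_{L^2}/|f|_{H^{s+1}})^2\bigr)^{1/2}$ for a small $\sigma>0$, a Schur/Young convolution argument on dyadic scales gives $N^s|P_N(P_{\gtrsim N}f\,g_x)|_{L^2}\lesssim \delta_N |f|_{H^{s+1}}|g|_{L^\infty}$, and the $\ell^2$ summability $|(\delta_{2^j})_j|_{\ell^2}\le 1$ follows from Fubini together with $\sum_j (2^j/M)^{-2\sigma}\lesssim 1$ and Plancherel $\sum_M M^{2(s+1)}|P_M f|_{L^2}^2\sim |f|_{H^{s+1}}^2$. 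The variant with $|g_x|_{L^\infty}$ is identical, keeping one derivative on $g_x$ rather than moving it onto $f$, which is what forces the two entries in the $\min$.

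For \eqref{proN2} the idea is the same but now $g_x$ is measured in a Sobolev space rather than $L^\infty$, so instead of Bernstein on $g$ I would invoke the Sobolev product estimate \eqref{prod2}. Concretely, fixing the output frequency $\sim N$, each dyadic piece $P_M f$ with $M\gtrsim N$ can be paired with $g_x$ via $\|P_M f \, g_x\|_{H^{s-1}} \lesssim \|P_M f\|_{H^{s+1}}\|g_x\|_{H^{s-1}}$ — here one checks the admissibility conditions of \eqref{prod2} with $p=s+1$, $r=s-1$, $t=s-1$: indeed $r+p-t = s+1 > 1/2$ and $p\ge t$, $r\ge t$ hold — and since $\|g_x\|_{H^{s-1}} = |g|_{H^s}$... wait, more precisely $|g_x|_{H^{s-1}}\lesssim |g|_{H^s}$, we get a bound of the right shape. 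Projecting onto frequency $N$ costs $N^{s-1}$ at most on the left, and summing the geometric gain $(N/M)$ from the mismatch of frequencies over $M\gtrsim N$ produces the coefficient $\delta_N$ with $|(\delta_{2^j})_j|_{\ell^2}\le 1$ by the same Fubini argument. Alternatively, and perhaps more cleanly, one splits $P_N(P_{\gtrsim N} f\, g_x) = P_N(P_{\gtrsim N} f\, P_{\gtrsim N/4} g_x) + P_N(P_{\gtrsim N} f\, P_{\ll N/4} g_x)$ where in the second term the two frequencies must be comparable and $\gtrsim N$, so $f$ again carries high frequency.

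The main obstacle I expect is purely bookkeeping: organizing the dyadic sums so that the decaying factors $(M/N)^{-\sigma}$ (which come from the excess regularity $s+1$ on $f$ against what is actually needed at the output scale) combine with Plancherel to yield a genuine $\ell^2$ sequence $(\delta_{2^j})_j$ of norm $\le 1$, rather than just an $\ell^2$-summability with an uncontrolled constant. The condition $s>1/2$ enters exactly in \eqref{proN2} to make the product estimate \eqref{prod2} applicable with $t=s-1\ge 0$ being genuinely allowed and $r+p-t>1/2$; for \eqref{proN1} only $s>0$ is needed since there we use the Moser-type bound \eqref{prod3} (equivalently Bernstein on $g$) and never require positivity of a Sobolev index on $g$. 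Since the excerpt explicitly says this is "straightforward (see Appendix)", I would keep the write-up short, presenting the decomposition and the Schur-test step and relegating the elementary convolution inequality on $\ell^2(\mathbb{Z})$ to a one-line citation.
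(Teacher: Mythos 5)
Your treatment of \eqref{proN1} is essentially the paper's argument: H\"older with $g_x\in L^\infty$ (or the derivative moved onto $f$ via $P_N(fg_x)=\partial_xP_N(fg)-P_N(f_xg)$ plus Bernstein), combined with the $\ell^2$ square-summability of the dyadic tail $|P_{\gtrsim N}f|_{L^2}\lesssim N^{-s}\delta_N|f|_{H^s}$; your Schur-test bookkeeping is if anything more explicit than what the paper writes. No objection there.

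For \eqref{proN2} there is a genuine gap. Your primary route applies \eqref{prod2} with $r=s-1$ to the factor $g_x$, which produces $\|g_x\|_{H^{s-1}}\sim|g|_{H^{s}}$ on the right-hand side --- one full derivative more on $g$ than the claimed $|g|_{H^{s-1}}$. You notice this mid-sentence and dismiss it as ``a bound of the right shape'', but it is not: the whole point of \eqref{proN2} is that $g$ is only measured in $H^{s-1}$, so $g_x$ lives in $H^{s-2}$ with $s-2\le -1/2$, and \eqref{prod2} cannot be invoked with $r=s-2$, $t=s-1$ since $r\ge t$ fails. Your alternative decomposition (low-high plus high-high, with the roles of the two terms swapped relative to how you describe them: it is the piece with $g$ at low frequency that forces $f\sim N$, and the other piece where both frequencies are comparable and high) is the correct starting point and is exactly the paper's, but you never estimate the high-high piece, which is where the real work and the hypothesis $s>1/2$ sit. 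The paper handles it by Bernstein $|P_Nh|_{L^2}\lesssim N^{1/2}|h|_{L^1}$ at the output frequency, Cauchy--Schwarz on each product $P_Kf\,\tilde P_Kg_x$, the conversions $|P_Kf|_{L^2}\lesssim K^{-(s+1)}|P_Kf|_{H^{s+1}}$ and $|\tilde P_Kg_x|_{L^2}\lesssim K^{2-s}|\tilde P_Kg|_{H^{s-1}}$ (so the derivative on $g$ is paid for by the surplus regularity of $f$ scale by scale), and finally $\sum_{K\gtrsim N}K^{1-2s}\lesssim N^{1-2s}$, which converges precisely because $s>1/2$. Your stated explanation of where $s>1/2$ enters (admissibility of \eqref{prod2} with $t=s-1$) is therefore also misplaced; as written, your argument proves a weaker estimate than \eqref{proN2}, and the weaker estimate would not suffice for the difference estimates in Section 3 where $g$ plays the role of the lower-regularity difference $\eta\in H^{s-1}$.
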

We  also  need the following property of the regularizing operator defined in \eqref{op} (see Appendix).
\begin{proposition}\label{propgl} Let $f\in H^{\lambda/2+s}$, for $s\in \R_+$ . We have
 \begin{eqnarray}\label{gl1}
(g_\lambda[\Lambda^s f], \Lambda^s f)_{L^2}\geq |f_x|_{H^{\lambda/2-1+s}}.
\end{eqnarray}
\end{proposition}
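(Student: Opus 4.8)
The plan is to pass to the Fourier side and reduce the claim to an elementary pointwise inequality between symbols. Since $\Lambda^s$ and $g_\lambda$ are Fourier multipliers with the real, even symbols $\langle\xi\rangle^s$ and $|\xi|^\lambda$ (where $\langle\xi\rangle:=(1+\xi^2)^{1/2}$), they are self-adjoint on $L^2(\R)$ and commute, so Plancherel's identity gives
\[
(g_\lambda[\Lambda^s f],\Lambda^s f)_{L^2}=\int_{\R}|\xi|^\lambda\langle\xi\rangle^{2s}\,|\widehat f(\xi)|^2\,d\xi ,
\qquad
|f_x|_{H^{\lambda/2-1+s}}^2=\int_{\R}\xi^2\langle\xi\rangle^{\lambda-2+2s}\,|\widehat f(\xi)|^2\,d\xi .
\]
The hypothesis $f\in H^{\lambda/2+s}$ is precisely what makes the first integral finite, since $|\xi|^\lambda\langle\xi\rangle^{2s}\le\langle\xi\rangle^{\lambda+2s}$; the finiteness of the second integral will then follow a posteriori from the inequality below.

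Next I would establish the pointwise bound $|\xi|^\lambda\langle\xi\rangle^{2s}\ge\xi^2\langle\xi\rangle^{\lambda-2+2s}$ for every $\xi\in\R$, which, after cancelling the common factor $\langle\xi\rangle^{2s}$, is equivalent to
\[
|\xi|^\lambda\,\langle\xi\rangle^{2-\lambda}\ \ge\ \xi^2 .
\]
This is exactly where the restriction $\lambda\in\,]0,2]$ enters: the exponent $2-\lambda$ is nonnegative and $\langle\xi\rangle\ge|\xi|$, hence $\langle\xi\rangle^{2-\lambda}\ge|\xi|^{2-\lambda}$ and therefore $|\xi|^\lambda\langle\xi\rangle^{2-\lambda}\ge|\xi|^\lambda|\xi|^{2-\lambda}=\xi^2$. (Equivalently one may split cases: for $|\xi|\le1$ use $\langle\xi\rangle\ge1$, and for $|\xi|\ge1$ use $\langle\xi\rangle\ge|\xi|$.) Multiplying this inequality by $|\widehat f(\xi)|^2\ge0$ and integrating over $\R$ yields $(g_\lambda[\Lambda^s f],\Lambda^s f)_{L^2}\ge|f_x|_{H^{\lambda/2-1+s}}^2$, which is the assertion of the proposition.

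There is no genuine obstacle here; the only mild subtlety is the low-frequency regime, where the symbol $|\xi|^\lambda$ of $g_\lambda$ degenerates and so cannot by itself control a derivative, but is rescued by the factor $\langle\xi\rangle\ge1$, at the price of lowering the target Sobolev exponent to $\lambda/2-1+s$ (which is negative once $s<1-\lambda/2$). It is worth recording along the way that the same computation gives $(g_\lambda[\Lambda^s f],\Lambda^s f)_{L^2}\ge0$, and, from the complementary symbol bound $|\xi|^\lambda\langle\xi\rangle^{2s}\lesssim\xi^2\langle\xi\rangle^{\lambda-2+2s}+\langle\xi\rangle^{2s}$, the two-sided control $(g_\lambda[\Lambda^s f],\Lambda^s f)_{L^2}\lesssim|f_x|_{H^{\lambda/2-1+s}}^2+|f|_{H^s}^2$, which is convenient when deriving the uniform-in-$(\epsilon,\lambda)$ a priori estimates used elsewhere in the paper.
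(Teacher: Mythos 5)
Your proof is correct and follows essentially the same route as the paper's: pass to the Fourier side via Plancherel and reduce to the pointwise symbol inequality $|\xi|^{\lambda-2}\ge\langle\xi\rangle^{\lambda-2}$ (equivalently $|\xi|^\lambda\langle\xi\rangle^{2-\lambda}\ge\xi^2$), which is exactly where $\lambda\le 2$ is used in both arguments. Note only that, like the paper's own computation, you end up with $|f_x|^2_{H^{\lambda/2-1+s}}$ on the right-hand side, so the missing square in the statement of the proposition appears to be a typo.
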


\section{Local existence for the regularized system and energy estimates}

\subsection{Local well-posedness and estimates for a Bona-Smith's approximation}\label{31}
We fix $ \epsilon>0 $ in \eqref{2}. For $ \mu>0  $ we 
 consider the  Bona-Smith's type regularization  problem associated to (\ref{2})
\begin{eqnarray}\label{3l}
\left\{
\begin{array}{lcl}
\zeta_t- \mu \zeta_{txx} +u_x+(u\zeta)_x+ \epsilon g_\lambda(\zeta) &=&0,\\
u_t-u_{xxt}+\zeta_x+u u_x&=&0,\\
(\zeta,u)(0)&=(\zeta_0,u_0).& 
\end{array} \right.
\end{eqnarray}
Setting $ V=(\zeta,u)  $, \eqref{3l} can be rewritten as 
\begin{equation}
\frac{d}{dt} V = \Omega_\mu (V) 
\end{equation}
where 
$$
 \Omega_\mu (V)=\Bigl( (1-\mu \partial_x^2)^{-1}[-u_x -(u \zeta)_x - \epsilon g_\lambda(\zeta)],\;
(1-\partial_x^2)^{-1}[ -\zeta_x - u u_x ]\Bigr) 
$$
Since $ H^{s}(\R) $ is an algebra for $ s>1/2 $ and $ \lambda \le 2$, it is straightforward to check that $ \Omega_\mu $ is a  locally Lipschitz  mapping from $ (H^{s+1}(\R))^2 $ into itself for $ s>1/2$. Therefore by the Cauchy-Lipschitz theorem for ODE in Banach spaces we infer that 
\eqref{4} is locally well-posed in $(H^{s+1}(\R))^2 $, i.e. for any $ (\zeta_0,u_0)\in  (H^{s+1}(\R))^2 $ there exists $ 
 T_s=T_s(|\zeta_0|_{H^{s+1}}+|u_0|_{H^{s+1}})$ and a unique solution $(\zeta,u) \in C^1([0,T_s]; (H^{s+1})^2)$. Moreover, for any $R>0 $,  the mapping that to $ (\zeta_0,u_0) $ associates $(\zeta,u) $ is continuous from $ ( B(0,R)_{H^{s+1}})^2\subset (H^{s+1})^2 $ into 
 $ C([0,T_s(R)]; (H^{s+1})^2)$.

We start  by stating some energy estimate fundamental to prove our result.
For $ s\ge 0 $ and $ \mu\ge 0  $ we define $ E^s_\mu \; :\; (H^{s+1}(\R))^2 \to \R $ by 
\begin{equation}\label{ener}
E^s_\mu(\zeta,u) =|\zeta|_{H^s}^2+\mu |\zeta_x|_{H^s}^2+|u|_{H^{s+1}}^2
\end{equation}
In the sequel we denotes by $ (\delta_N)_{N\in 2^{\Z}} $  any  sequence of positive real numbers such that 
$$ \sum_{j\in \Z} \delta_{2^j}^2 \le 1\ .$$ 

\subsubsection{$H^s$ estimate.} 
Applying the operator $P_N$  to the equations in (\ref{3l}), multiplying respectively by $
\cro{N}^{2s}P_N \zeta$ and $\cro{N}^{2s} P_N   u$ the first and the second equation,  integrating with respect to $x$ and adding the resulting equations, we get
\begin{align}
\displaystyle \frac{\cro{N}^{2s} }{2}  \frac{d}{dt} E^0_\mu(P_N \zeta, P_N u) &+\epsilon \cro{N}^{2s} (g_\lambda[P_N \zeta],P_N\zeta)_{L^2}\nonumber \\
&=-\cro{N}^{2s} (P_N(\zeta u)_x,P_N \zeta)_{L^2}-2 \cro{N}^{2s}(P_N ( u u_x),P_N  u)_{L^2}.\label{E1s}
\end{align}  
We note that  Proposition \ref{propgl} yields 
$$ \cro{N}^{2s}(g_\lambda[P_N \zeta],P_N \zeta)_{L^2}\geq | P_N \zeta_x|^2_{H^{\lambda/2-1}} \ge 0 \; .$$
Integrating by parts and using  \eqref{cm1}  and \eqref{proN1}  we get
\begin{align*}
\cro{N}^{2s} |(P_N (u u_x) ,P_N u)_{L^2}|& =\cro{N}^{2s} |(P_N ((P_{\ll N}+P_{\gtrsim N})u u_x) ,P_N u)_{L^2}| \\
&=\cro{N}^{2s} \Bigl|-\frac{1}{2} (P_{\ll N} u_x P_N u, P_N u)_{L^2}  + \\
& \qquad ([P_N, P_{\ll N} u] u_x ,P_N u)_{L^2}+(P_N(P_{\gtrsim N} u u_x), P_N u) 
\Bigr| \\
&\lesssim \cro{N}^{2s} |u_x|_{L^\infty} |\tilde{P}_N u |^2_{L^2}+\delta_N N^s |P_N u|_{L^2}|u_x|_{L^\infty} |u|_{H^s}\; .
\end{align*} 
In the same way, integrating by parts and using \eqref{cm1} and \eqref{proN1} we obtain
\begin{align*}
\cro{N}^{2s}|(P_N(u \zeta_x),P_N  \zeta)_{L^2}|&=\cro{N}^{2s} \Bigl|-\frac{1}{2} (P_{\ll N} u_x P_N \zeta, P_N  \zeta)_{L^2}  +
([P_N, P_{\ll N} u]  \zeta_x ,P_N  \zeta)_{L^2} \\
& \qquad + (P_N(P_{\gtrsim N} u \,  \zeta_x), P_N  \zeta)_{L^2} \Bigr|\\
&\lesssim  \cro{N}^{2s}|u_x|_{L^\infty}  |\tilde{P}_N \zeta |^2_{L^2}+ \cro{N}^{s} \delta_N |\zeta|_{L^\infty}|u|_{H^{s{\color{red}+1}}} |P_N \zeta|_{L^2}\; .
\end{align*}
 While \eqref{prod3} leads to  
\begin{align*}
\cro{N}^{2s} |(P_N(u_x \zeta),P_N  \zeta)_{L^2}|& \lesssim \cro{N}^{s} \delta_N |u_x \zeta|_{H^s}  |P_N \zeta|_{L^2} \\
 & \lesssim \cro{N}^{s} \Bigl( \delta_N |u|_{H^{s+1}} |\zeta|_{L^\infty} +|u_x|_{L^\infty} |\zeta|_{H^s} \Bigr) |P_N \zeta|_{L^2} \
\end{align*}
Plugging  the three last inequalities in (\ref{E1s}), integrating on $ ]0,T[ $ and applying H\"older inequality in time one gets 
\begin{align*} 
 \displaystyle 
 |P_N \zeta|_{{L^\infty_T} H^s}^2 &+\mu  |P_N \zeta|_{{L^\infty_T} H^{s+1}}^2+ |P_N u|_{{L^\infty_T} H^{s+1}}^2+ \epsilon |P_N \zeta|_{L^2_T H^{s+\lambda/2-1}}^2
\lesssim  \cro{N}^{2s} E^0_\mu (P_N \zeta_0,P_N u_0)\nonumber \\
& + T^{1/2}\delta_N  (|u_x|_{L^\infty_{Tx}}+|\zeta|_{L^\infty_{Tx}})  (|u|_{L^\infty_T H^{s+1}}+|\zeta|_{L^\infty_T H^s})
  (|P_N \zeta|_{{L^2_T} H^s} + |P_N u|_{{L^2_T} H^{s+1}})
\end{align*}
Summing in $ N>0 $ and  applying Cauchy-Schwarz inequality  in $N$ on the last term to the above right-hand side member we eventually get 
\begin{align} 
 | \zeta|_{{L^\infty_T} H^s}^2 +&\mu  | \zeta|_{{L^\infty_T} H^{s+1}}^2+ | u|_{{L^\infty_T} H^{s+1}}^2+ \epsilon |\zeta|_{L^2_T H^{s+\lambda/2-1}}^2
\lesssim   E^s_\mu (\zeta_0,u_0)\nonumber \\
& + T^{1/2} (|u_x|_{L^\infty_{Tx}}+|\zeta|_{L^\infty_{Tx}})  (|u|_{L^\infty_T H^{s+1}}+|\zeta|_{L^\infty_T H^s})
  (| \zeta|_{{L^2_T} H^s} + |u|_{{L^2_T} H^{s+1}})\nonumber \\
  &\lesssim  E^s_\mu (\zeta_0,u_0,)
 + T  (|u_x|_{L^\infty_{Tx}}+|\zeta|_{L^\infty_{Tx}})  (|u|_{L^\infty_T H^{s+1}}^2+|\zeta|_{L^\infty_T H^s}^2)
\label{gf}
\end{align}
According to classical Sobolev inequalities, 
denoting by $ T^{\infty}_s $  the maximal time of existence in $ (H^{s+1}(\R))^2 $, The local well-posedness of \eqref{3l} in $ (H^{s+1}(\R))^2 $ together with \eqref{gf} ensure that 
for any $ s >1/2 $, $T^\infty_s= T^\infty_{\frac{1}{2}+} $. On the other hand, \eqref{gf} 
with $ s=\frac{1}{2}+ $ together with a  classical continuity argument ensure that $  T^\infty_{\frac{1}{2}+}\gtrsim 
[E^{\frac{1}{2}+}_\mu(\zeta_0,u_0)]^{-1/2} $ and that for any $s>1/2 $, 
 \begin{equation}
 \sup_{t\in [0,T_{\frac{1}{2}+,\mu}]} E^{s}_\mu(\zeta,u)(t) +\epsilon |\zeta_x|_{L^2_{T_{\frac{1}{2}+,\mu}} H^{s+\frac{\lambda}{2}-1}}^2\le 2 E^{s}_\mu(\zeta_0,u_0)
\label{defT0}
 \end{equation}
 with $ T_{\frac{1}{2}+,\mu}=T_{\frac{1}{2}+}(E^{\frac{1}{2}+}_\mu(\zeta_0,u_0))\sim [ 4 E^{\frac{1}{2}+}_\mu(\zeta_0,u_0)]^{-1/2} $.
\subsubsection{$H^{s-1}$ estimate for the difference of two solutions.}\label{313}
Let $(\zeta_i,u_i )$ be two solutions to \eqref{3l} with respectively $ \mu_1 $ and $ \mu_2 $, then setting $ \eta=\zeta_1-\zeta_2 $ and $ v=u_1-u_2 $ it holds 
\begin{eqnarray}\label{3ldif}
\left\{
\begin{array}{lcl}
\eta_t-\mu_1 \eta_{txx}+v_x+(u_1\eta)_x+ \epsilon g_\lambda(\eta) &=&(v\zeta_2)_x+(\mu_1-\mu_2) {\zeta_2}_{txx},\\
v_t+\eta_x+u_1 v_x-v_{xxt}&=& v {u_2}_x,\\
\end{array} \right.
\end{eqnarray}
 Applying the operator $P_N $  to the equations in (\ref{3}), multiplying respectively by $\cro{N}^{2(s-1)}P_N  \zeta$ and $\cro{N}^{2(s-1)} P_N  v$ the first and the second equation,  integrating with respect to $x$, adding the resulting equations and proceeding as above but with  \eqref{prod2} and \eqref{proN2} instead of \eqref{prod3} and  \eqref{proN1} we  get
 \begin{align}
\cro{N}^{2(s-1)}\frac{d}{dt}  E^{0}_{\mu_1}(P_N \eta,P_N v) &+2\epsilon \cro{N}^{2(s-1)} |P_N \eta_x|^2_{H^{\lambda/2-1}}  \lesssim
\delta_N \cro{N}^{s-1}  |{u_1}|_{H^{s+1}} \nonumber\\
&\times( |\eta|_{H^{s-1}}+ |v|_{H^{s-1}})   (|P_N \eta|_{L^2}+ |P_N v |_{L^2} ) \nonumber\\
&+\cro{N}^{2s-2}  |P_N v |_{L^2}|P_N(v {u_2}_x)|_{L^2}\nonumber \\
& +\cro{N}^{2s-2}  |P_N \eta|_{L^2} \Bigl(  |P_N (v\zeta_2)_x|_{L^2} 
 + |\mu_1-\mu_2| |P_N {\zeta_2}_{xxt}|_{L^2} \Bigr) . \label{difdif}
 \end{align}
Noticing that, since $ s>1/2 $ it holds 
$$
\cro{N}^{s-1} |P_N (v\zeta_2)_x|_{L^2}\lesssim |P_N(v \zeta_2))|_{H^s} \lesssim \delta_N |v \zeta_2|_{H^s} \lesssim \delta_N  |v|_{H^s}  |\zeta_2|_{H^s} 
$$
and that \eqref{prod2} leads to  
$$
\cro{N}^{s-1}  |P_N(v u_{2x}) |_{L^2}\le  \delta_N |v u_{2x} |_{H^{s-1}}\lesssim \delta_N |v|_{H^{s-1}} |u_2|_{H^{s+1}}\;.
 $$
Therefore integrating \eqref{difdif}  on $]0,T[$, we eventually get 
\begin{align*} 
 \displaystyle 
 |P_N \eta|_{{L^\infty_T} H^{s-1}}^2 +&\mu  |P_N \eta|_{{L^\infty_T} H^{s}}^2+ |P_N v|_{{L^\infty_T} H^{s}}^2+ \epsilon |P_N \zeta|_{L^2_T H^{s+\lambda/2-2}}^2\nonumber \\
& \lesssim  \cro{N}^{2{s-1}} E^0_\mu (P_N v(0), P_N \eta(0))+  |\mu_1-\mu_2|^2 |{\zeta_2}_t|_{L^2_T H^{s+1}}^2\nonumber \\
& \quad+ T^{1/2}\delta_N  (1+|{u_1}|_{L^\infty_T H^{s+1}}+|{u_2}|_{L^\infty_T H^{s+1}}+|\zeta_2|_{L^\infty_T H^s})\\
& \qquad \times (|v|_{L^\infty_T H^{s}}+|\eta|_{L^\infty_T H^{s-1}})
  (|P_N \eta|_{{L^2_T} H^{s-1}} + |P_N u|_{{L^2_T} H^{s}})
\end{align*}
Summing in $ N>0 $ and  applying Cauchy-Schwarz inequality  in $N$ on the last term to the above right-hand side member we obtain 
\begin{align} 
 | \eta|_{{L^\infty_T} H^{s-1}}^2 +&\mu  | \eta|_{{L^\infty_T} H^{s}}^2+ | v|_{{L^\infty_T} H^{s}}^2+ \epsilon |\eta|_{L^2_T H^{s+\lambda/2-2}}^2
\nonumber \\
& \lesssim   E^{s-1}_\mu (v(0), \eta(0))+ T |\mu_1-\mu_2|^2 |{\zeta_2}_t|_{L^\infty_T H^{s+1}}^2\nonumber \\
 & + T   (1+|{u_1}|_{L^\infty_T H^{s+1}}+|{u_2}|_{L^\infty_T H^{s+1}}+|\zeta_2|_{L^\infty_T H^s}) (|v|_{L^\infty_T H^{s}}^2+|\eta|_{L^\infty_T H^{s-1}}^2)
\label{dif}
\end{align}
\subsection{Local well-posedness of \eqref{2} uniformly in $\epsilon\in [0,1]  $ and $ \lambda\in ]0,2]$}
We will prove the local well-posedness of the regularized problem \eqref{2} using a standard compactness method. 
\begin{proposition}[Uniform in $ \epsilon $ and $ \lambda $ LWP]
 \label{essentiel}
Let $ s>1/2 $ and $(\zeta_0,u_0)\in H^s(\R) \times H^{s+1}(\R) $, then there exists $ T_0=T_0(|\zeta_0|_{H^{\frac{1}{2}+}}+|u_0|_{H^{\frac{3}{2}+}})$ such that for any $\epsilon\ge 0 $ and $ \lambda\in ]0,2]$ there exists a solution $(\zeta^{\epsilon,\lambda},u^{\epsilon,\lambda})$ of the Cauchy problem  (\ref{2}) in $ C([0,T_0]; H^s(\R)\times H^{s+1})$.   This is the unique solution to the IVP \eqref{2} that belongs to 
 $ L^\infty(]0,T_0[; H^s(\R)\times H^{s+1}) $.

Moreover,
$$
\sup_{\epsilon,\lambda} |(\zeta^{\epsilon,\lambda},u^{\epsilon,\lambda})
|_{L_{T_0}^{\infty}H^s\times H^{s+1}} \lesssim 
|(\zeta_0,u_0)|_{H^s\times H^{s+1}}$$
and  for any $\alpha>0 $, the solution map $S_{\epsilon,\lambda} :(\zeta_0,u_0) \longrightarrow (\zeta^{\epsilon,\lambda},u^{\epsilon,\lambda})$ is  continuous  
 from $B(0,\alpha)_{H^s\times H^{s+1} }  $ into $ C([0,T(\alpha)]; H^s(\R)\times H^{s+1}(\R)) $ uniformly in $\epsilon $ and $ \lambda$.
 
 Finally, let $ T^* $ be the maximal time of existence in $ H^s(\R) \times H^{s+1}(\R) $ of the solution  $(\zeta^{\epsilon,\lambda},u^{\epsilon,\lambda})$  emanating from $(\zeta_0,u_0)\in H^s(\R) \times H^{s+1}(\R) $. Then for any $ 0<T'<T^* $ it holds 
 \begin{equation}\label{exp}
  | \zeta|_{L^\infty_{T'} H^s}^2 + | u|_{L^\infty_{T'} H^{s+1}}^2  \lesssim \exp( C\; T' (|u_x|_{L^\infty_{T'x}}+|\zeta|_{L^\infty_{T'x}})) E^s_0 (\zeta_0,u_0)
 \end{equation}
 for some universal constant $ C>0 $.
 \end{proposition}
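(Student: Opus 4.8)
\emph{Strategy.} The plan is to realise $(\zeta^{\epsilon,\lambda},u^{\epsilon,\lambda})$ as the limit, when the Bona--Smith parameter $\mu\to 0$, of the solutions of \eqref{3l}, using the $\mu$-uniform bounds \eqref{gf}--\eqref{defT0} and the difference estimate \eqref{dif}, and to check at every step that all constants are independent of $\epsilon\ge 0$ and $\lambda\in]0,2]$ (the term $\epsilon g_\lambda(\zeta)$ only improves the estimates, through Proposition~\ref{propgl}). Concretely: Step~1, a common time of existence and $\mu$-uniform a priori bounds for \eqref{3l} started from mollified data; Step~2, existence of a limit solving \eqref{2}, by compactness; Step~3, uniqueness via \eqref{dif} with $\mu=0$, and the exponential estimate \eqref{exp}; Step~4, the upgrade from $L^\infty_{T_0}$ to $C([0,T_0])$ in time and the continuity of the flow. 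Iterating Step~1 yields a maximal time $T^*\in(0,+\infty]$.

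\emph{Steps 1 and 2: a priori bounds and existence.} Fix a Bona--Smith mollification $(\zeta_0^\mu)_{\mu>0}$ of $\zeta_0$ (e.g. $\zeta_0^\mu=\mathcal F^{-1}(\eta(\sqrt\mu\,\xi)\hat\zeta_0)$), keeping $u_0$ unchanged, so that $\zeta_0^\mu\in H^\infty$, $\zeta_0^\mu\to\zeta_0$ in $H^s$, $|\zeta_0^\mu|_{H^{s+\beta}}\lesssim\mu^{-\beta/2}|\zeta_0|_{H^s}$ and $|\zeta_0^\mu-\zeta_0|_{H^{s-\beta}}\to 0$ for every $\beta\ge 0$. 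Solving \eqref{3l} with datum $(\zeta_0^\mu,u_0)\in(H^{s+1})^2$ and parameter $\mu$ gives $(\zeta^\mu,u^\mu)$; since $E^{1/2+}_\mu(\zeta_0^\mu,u_0)\lesssim|\zeta_0|_{H^{1/2+}}^2+|u_0|_{H^{3/2+}}^2$ uniformly in $\mu$, \eqref{defT0} produces a time $T_0=T_0(|\zeta_0|_{H^{1/2+}}+|u_0|_{H^{3/2+}})>0$, independent of $\mu,\epsilon,\lambda$, together with $\sup_{[0,T_0]}E^s_\mu(\zeta^\mu,u^\mu)+\epsilon|\zeta^\mu_x|^2_{L^2_{T_0}H^{s+\lambda/2-1}}\le 2E^s_\mu(\zeta_0^\mu,u_0)\lesssim|\zeta_0|_{H^s}^2+|u_0|_{H^{s+1}}^2$. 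From the equations, $u^\mu_t=(1-\partial_x^2)^{-1}(-\zeta^\mu_x-u^\mu u^\mu_x)$ is bounded in $L^\infty_{T_0}H^{s+1}$ and $\zeta^\mu_t=(1-\mu\partial_x^2)^{-1}(-u^\mu_x-(u^\mu\zeta^\mu)_x-\epsilon g_\lambda(\zeta^\mu))$ in $L^\infty_{T_0}H^{s-2}$, uniformly in $\mu$. By the Aubin--Lions lemma, along a sequence $\mu_n\to 0$ one gets $u^{\mu_n}\to u$ in $C([0,T_0];H^{s+1-\delta}_{\mathrm{loc}})$, $\zeta^{\mu_n}\to\zeta$ in $C([0,T_0];H^{s-\delta}_{\mathrm{loc}})$ for every $\delta>0$, plus weak-$*$ convergence in $L^\infty_{T_0}H^{s+1}$ resp. $L^\infty_{T_0}H^s$. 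Passing to the limit in the weak formulation \eqref{weak} of \eqref{3l} is then immediate — the $\mu$-terms vanish, the linear and dissipative terms pass by weak-$*$ convergence, the quadratic term $\psi_x\zeta^{\mu_n}u^{\mu_n}$ by the strong local convergence — so $(\zeta,u)\in L^\infty_{T_0}(H^s\times H^{s+1})$ solves \eqref{2} in the sense of Definition~\ref{def}, and by lower semicontinuity $|(\zeta,u)|_{L^\infty_{T_0}H^s\times H^{s+1}}\lesssim|(\zeta_0,u_0)|_{H^s\times H^{s+1}}$, uniformly in $\epsilon,\lambda$.

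\emph{Step 3: uniqueness and the exponential bound.} If $(\zeta_1,u_1),(\zeta_2,u_2)$ are two solutions of \eqref{2} in $L^\infty_{T_0}(H^s\times H^{s+1})$ with the same datum, the computation leading to \eqref{dif} with $\mu_1=\mu_2=0$ (legitimate at this regularity after a spatial regularization and passage to the limit, the time derivatives being understood in $H^{s-2}\times H^{s+1}$) gives, on short intervals, $|\eta|^2_{L^\infty_\tau H^{s-1}}+|v|^2_{L^\infty_\tau H^s}\lesssim\tau\,(1+\cdots)\,(|\eta|^2_{L^\infty_\tau H^{s-1}}+|v|^2_{L^\infty_\tau H^s})$, whence $\eta\equiv v\equiv 0$ and, by iteration, uniqueness on $[0,T_0]$; consequently the whole family $(\zeta^\mu,u^\mu)$ converges. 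For \eqref{exp}, given $0<T'<T^*$ (so $|u_x|_{L^\infty_{T'x}}+|\zeta|_{L^\infty_{T'x}}<\infty$, since $H^s\hookrightarrow L^\infty$ and $H^{s+1}\hookrightarrow W^{1,\infty}$), one splits $[0,T']$ into $\sim CT'(|u_x|_{L^\infty_{T'x}}+|\zeta|_{L^\infty_{T'x}})$ subintervals of length $\sim[C(|u_x|_{L^\infty_{T'x}}+|\zeta|_{L^\infty_{T'x}})]^{-1}$ on each of which \eqref{gf} with $\mu=0$, applied directly to \eqref{2}, yields $\sup E^s_0(\zeta,u)\le 2E^s_0(\zeta,u)$ at the left endpoint; iterating gives the factor $2^{\#}\le\exp(CT'(|u_x|_{L^\infty_{T'x}}+|\zeta|_{L^\infty_{T'x}}))$, i.e. \eqref{exp}, uniformly in $\epsilon,\lambda$.

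\emph{Step 4: time-continuity, continuity of the flow, and the main obstacle.} Since $u_t\in L^\infty_{T_0}H^{s+1}$ one has $u\in C([0,T_0];H^{s+1})$ already, while $\zeta\in C_w([0,T_0];H^s)\cap C([0,T_0];H^{s-2})$. Establishing the energy identity for the rough solution $(\zeta,u)$ — by regularising \eqref{2} in space, controlling the commutator errors by Friedrichs-type arguments (using \eqref{cm1}--\eqref{proN2} and \eqref{gl1}), and letting the regularisation go to $0$ — shows that $t\mapsto E^s_0(\zeta,u)(t)+2\epsilon\int_0^t(\cdots)$ is absolutely continuous, hence $t\mapsto|\zeta(t)|_{H^s}^2$ is continuous, which upgrades $\zeta$ to $C([0,T_0];H^s)$ by the Radon--Riesz property; this completes the existence statement. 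For continuous dependence, let $(\zeta_{0,n},u_{0,n})\to(\zeta_0,u_0)$ in $B(0,\alpha)$; \eqref{dif} with $\mu=0$ and Gronwall first give convergence of the corresponding solutions in $C([0,T(\alpha)];H^{s-1}\times H^s)$, with a rate depending only on $\alpha$, and then the energy identities for $(\zeta_n,u_n)$ and $(\zeta,u)$, together with $|\zeta_{0,n}|_{H^s}\to|\zeta_0|_{H^s}$ and the continuity of the quadratic energy flux under the $(H^{s-1}\text{-strong},\,H^s\text{-bounded})$ convergence (a dominated–convergence/tail argument, plus weak lower semicontinuity of the dissipation, forcing $\sup_{t}\big||\zeta_n(t)|_{H^s}^2-|\zeta(t)|_{H^s}^2\big|\to0$), yield $\zeta_n\to\zeta$ in $C([0,T(\alpha)];H^s)$, and likewise $u_n\to u$ in $C([0,T(\alpha)];H^{s+1})$. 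All the constants involved are $\epsilon,\lambda$-free because \eqref{gf}, \eqref{defT0}, \eqref{dif} and \eqref{gl1} are. The principal difficulty is exactly this last step: the difference estimate \eqref{dif} loses one derivative, so the flow is a priori Lipschitz only from $H^s\times H^{s+1}$ into $C([0,T];H^{s-1}\times H^s)$, and recovering continuity in the sharp space $H^s\times H^{s+1}$ forces one through the energy identity and the (compensated–compactness type) continuity of the energy flux — and requires checking that none of the resulting moduli of continuity degenerate as $\epsilon\to0$ or $\lambda\to0$. Closely related, and technically the most delicate ingredient, is the justification of the energy identity for merely $H^s\times H^{s+1}$ solutions of \eqref{2}.
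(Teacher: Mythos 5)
Your overall skeleton (Bona--Smith regularization of \eqref{2} via \eqref{3l}, $\mu$-, $\epsilon$-, $\lambda$-uniform energy bounds from \eqref{defT0}, uniqueness from \eqref{dif} with $\mu_1=\mu_2=0$, and the interval-splitting argument for \eqref{exp}) matches the paper, and your compactness route to existence (Aubin--Lions instead of the paper's Cauchy-sequence argument) is a legitimate variant. The problem is Step 4, which you yourself identify as ``the principal difficulty'' and ``the most delicate ingredient'' but do not actually carry out. Both the upgrade of $\zeta$ from $L^\infty_{T_0}H^s\cap C_w([0,T_0];H^s)$ to $C([0,T_0];H^s)$ and, above all, the continuity of the flow map in $H^s\times H^{s+1}$ \emph{uniformly in $\epsilon$ and $\lambda$} are left resting on two unproved claims: (i) an exact energy \emph{identity} (not just the one-sided inequality that the commutator estimates give) for solutions that are merely $L^\infty_{T_0}(H^s\times H^{s+1})$, and (ii) the assertion that $\sup_t\bigl||\zeta_n(t)|_{H^s}^2-|\zeta(t)|_{H^s}^2\bigr|\to0$ follows from strong $H^{s-1}$ convergence plus $H^s$ boundedness ``by a dominated-convergence/tail argument''. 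Claim (ii) is precisely where the derivative loss bites: the $H^s$-level flux terms (e.g.\ $\int\Lambda^s(u\zeta_x)\,\Lambda^s\zeta$) do not converge uniformly in time under that hypothesis unless one also has a uniform high-frequency tail estimate $\sup_n\sup_t|P_{>N}\zeta_n(t)|_{H^s}\to0$, and producing that tail estimate is essentially the whole content of the step, not a routine dominated-convergence argument.

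The paper closes exactly this gap quantitatively rather than by an energy-identity/Radon--Riesz argument. Because the approximating data are sharp frequency truncations $S_n(\zeta_0,u_0)$ with $\mu_n=n^{-5}$, one has \emph{simultaneously} a rate in the weak norm, $|\eta|_{L^\infty_{T_s}H^{s-1}}+|v|_{L^\infty_{T_s}H^{s}}\lesssim n_2^{-1}\gamma(n_2)$ from \eqref{li}, and controlled growth in the strong norm, $E^{s+r}_{\mu_{n_i}}\lesssim n_i^{2r}E^s_0$ from \eqref{321}; the product of these two controls the supercritical terms (such as $|\zeta_{n_2}|_{H^{s+1}}|v|_{H^s}\lesssim\gamma(n_2)$) appearing in the $H^s\times H^{s+1}$ difference estimate \eqref{dif3}, which makes the sequence Cauchy in $C([0,T_s];H^s\times H^{s+1})$ and yields \eqref{difh2s} with constants independent of $\epsilon$ and $\lambda$. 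Strong-norm continuity of the solution, and then flow-map continuity by a triangle inequality through the approximations (estimates \eqref{kak-1}--\eqref{difs33}), all drop out of \eqref{difh2s}. Without some version of this interpolation between the $H^{s-1}$ rate and the $H^{s+1}$ growth --- or a genuinely completed proof of your claims (i) and (ii) with $\epsilon,\lambda$-free moduli --- the central assertions of the proposition (continuity of $t\mapsto\zeta(t)$ in $H^s$ and uniform continuity of $S_{\epsilon,\lambda}$ on balls) are not established.
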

\begin{proof}
$\bullet $ {\it Unconditional uniqueness.} Let $ (\zeta_i,u_i)$, $i=1,2$ be two solution  of the IVP \eqref{2}  that belong to  $L^\infty(]0,T[; H^s(\R)\times H^{s+1})$  for some $ T>0 $.
Setting 
  $ \eta=\zeta_1-\zeta_2 $ and $ v=u_1-u_2 $, exactly the same calculations as in \ref{313} on the difference of two solutions  to \eqref{3l}  but with $ \mu_1=\mu_2=0 $   (note that all the calculus are justified since for any $ N $,  $ P_N u_i $ and $ P_N \zeta_i $ 
  belong to $ C^1([0,T]; H^\infty)$) lead  for $0<T'<T $ to 
\begin{align} 
|v|_{L^\infty_{T'} H^{s}}^2+ & |\eta|_{L^\infty_{T'} H^{s-1}}^2  \lesssim   E^{s-1}_0 (v(0), \eta(0))\nonumber \\
 & + T'   (1+|{u_1}|_{L^\infty_{T} H^{s+1}}+|{u_2}|_{L^\infty_{T} H^{s+1}}+|\zeta_2|_{L^\infty_{T} H^s}) (|v|_{L^\infty_{T'} H^{s}}^2+|\eta|_{L^\infty_{T'} H^{s-1}}^2)
\label{gfu}
\end{align}
that proves the uniqueness in this class by taking 
$$ 0<T'< (1+|{u_1}|_{L^\infty_{T} H^{s+1}}+|{u_2}|_{L^\infty_{T} H^{s+1}}+|\zeta_2|_{L^\infty_{T} H^s})^{-1}
$$
 and  repeating the argument a finite number of times.

\noindent
$\bullet $ {\it Existence.}
Let $(\zeta_0,u_0)\in H^s(\R) \times H^{s+1}(\R) $. We regularized the initial data by setting 
$\zeta_{0,n}=S_n \zeta $ and $u_{0,n}=S_n u_0 $ where $S_n $ is the Fourier multiplier by  $\chi_{[-n,n]} $. It is straightforward to check that 
 for $ n\ge 1$, $(\zeta_{0,n},u_{0,n})\in (H^\infty(\R))^2 $ with 
 \begin{equation}\label{gd}
    |u_{0,n}|_{H^{s+r}} \le n^r  |u_0|_{H^{s}}\quad\text{and}\quad |\zeta_{0,n}|_{H^{s+r
 }}\le n^r |\zeta_0|_{H^s}\quad \text{for any} \, r\ge 0   \; .
  \end{equation}
 Setting $ \mu= \mu_n=n^{-5} $, we thus obtain that  for any $ s>0 $ and any $ r\ge 0 $
  $$
E^{s+r}_{\mu_n}(\zeta_{0,n},u_{0,n})=   |\zeta_{0,n}|_{H^{s+r}}^2+n^{-5}  |\partial_x \zeta_{0,n}|_{H^{s+r}}^2+|u_{0,n} |_{H^{s+r+1}}^2
   \lesssim n^{2r}  E^s_0(\zeta_{0},u_{0})
   $$
   
  In particular setting, for $ s>1/2$,
 \begin{equation} \label{defTs}
  T_s \sim  (1+|{u_0}|_{ H^{s+1}}+|\zeta_0|_{ H^s})^{-1} \; , 
  \end{equation}
 we deduce from subsection \ref{31},  that  we can construct a sequence $ (\zeta_n,u_n)_{n\ge 1} \subset 
  C^1([0,T_{\frac{1}{2}+}]; (H^\infty(\R))^2) $
   such that for any $ n\ge 1 $, $ (\zeta_n,u_n)$ satisfies \eqref{3l} with $\mu= \mu_n=n^{-5} $.  
  Moreover, from \eqref{defT0}  and \eqref{gd} we infer  that for $ s>1/2 $ and $ r\ge 0 $
  \begin{align}
   \sup_{t\in [0,T_{\frac{1}{2}+}]} E^{s+r}_{\mu_n}(\zeta_n,u_n)(t) &  \le 2  E^{s+r}_{\mu_n}(\zeta_{0,n},u_{0,n})
   \nonumber \\
   & \lesssim n^{2r} E^s_{0}(\zeta_0,u_0) \; .  \label{313}
 \end{align}
 On the other hand from the first equation in \eqref{3l} we obtain that on $ [0,T_{\frac{1}{2}+}] $, 
\begin{align}
|\partial_t \zeta_n|_{H^{s+1}}&\le  |(1-\mu_n \partial_x^2)^{-1}\Bigl(u_{n,x}+(u_n\zeta_n)_x+ \epsilon g_\lambda(\zeta_n)  \Bigr)|_{H^{s+1}}\nonumber\\
& \le |u_{n,x}+(u_n \zeta_n)_x+ \epsilon g_\lambda(\zeta_n) |_{H^{s+1}}\nonumber\\
& \lesssim |u_n|_{H^{s+2}} (1+|\zeta_n|_{L^\infty} )+ |u_n|_{H^{s+1}} |\zeta_{n,x}|_{L^\infty}+ |\zeta_n|_{H^{s+1+\lambda}}
\nonumber \\
& \lesssim \sqrt{1+E_0^s(u_n,\zeta_n)}\sqrt{ E_0^{s+3} (u_n,\zeta_n)}\lesssim  n^{3}   (1+E_0^s(u_0,\zeta_0)) \label{ou}
\end{align}
For $ n_1\ge n_2 $ applying \eqref{dif} with $ (\zeta_i, u_i)=(\zeta_{n_i},u_{n_i}) $, $i=1,2$, using \eqref{313}-\eqref{ou} and that 
$  |\frac{1}{n_1^{5}}-\frac{1}{n_2^{5}}| \le \frac{1}{n_2^{5}}$  we thus obtain 
\begin{align}
  |\zeta_{n_1}-\zeta_{n_2}|_{L^\infty_{T_s} H^{s-1}}^2+& |u_{n_1}-u_{n_2}|_{L^\infty_{T_s} H^{s}}^2 
    \lesssim  E^{s-1}_{0}(\zeta_{0,n_1}-\zeta_{0,n_2}, u_{0,n_1}-u_{0,n_2})
+ \frac{1}{n^{4}_2} \label{315}
\end{align}
 that forces $ ((\zeta_n,u_n))_{n\ge 1}  $ to be a Cauchy sequence in $C([0,T_s]; H^{s-1}\times H^{s}) $.
  Since according to \eqref{defT0},  $((\zeta_n,u_n))_{n\ge 1}$ is bounded in  $C([0,T_s]; H^s\times H^{s+1}) $ with 
 $ (\zeta_{n})_{n\ge 1} $ bounded in $ L^2(]0,T_s[; H^{s+\frac{\lambda}{2}-1}) $ it follows that there exists $ (\zeta, u) \in 
 L^\infty ([0,T_s]; H^s\times H^{s+1}) $ with $\zeta\in   L^2(]0,T_s[; H^{s+\frac{\lambda}{2}-1}) $ such that 
 \begin{eqnarray}
 (\zeta_n, u_n) & \tendsto{n\to  +\infty} & (\zeta,u) \quad \text{in} \; C([0,T_s];H^{s'}\times H^{s'+1}) , \quad \forall 0<s'<s \\
 \zeta_n & \weaktendsto{n\to  +\infty} &\zeta \quad \text{in} \;  L^2(]0,T_s[; H^{s+\frac{\lambda}{2}-1})
 \end{eqnarray}
 In particular, $(\zeta,u) $ is a solution of the IVP \eqref{2}. \\
 \noindent
 $\bullet $ {\it Continuity in the strong norm} To prove the continuity of   $(\zeta,u) $ in $ H^s\times H^{s+1} $ we use Bona-Smith arguments to check that the sequence $ ((\zeta_n,u_n))_{n\ge 1}  $ is actually a Cauchy sequence in $C([0,T_s]; H^s\times H^{s+1}) $. 
  Let $ n_1\ge n_2 $ and set $ (\eta,v)=\zeta_{n_1}-\zeta_{n_2}, u_{n_1}-u_{n_2}) $, $ \mu_i=
  \mu_{n_i}=n_i^{-5} $. By the definition of $(\zeta_n,u_n) $  for any $ 0<r<s $
  \begin{equation}
  E^{s-r}_{\mu_{n_2}}(\eta(0),v(0))\le n_2^{-2r} E^{s}_{0}(\eta(0),v(0)) 
  \end{equation}
  Therefore, \eqref{315} together with \eqref{313}  and \eqref{defTs} ensure that 
  \begin{equation}\label{li} 
  |\eta|_{L^\infty_{T_s} H^{s-1}}^2+|v|_{L^\infty_{T_s} H^{s}}^2 \lesssim \frac{1}{n_2^2} E^{s} _{0}(\eta(0),v(0))+\frac{1}{n_2^4}
\le  \Bigl(\frac{1}{n_2}  \gamma(n_2)\Bigr)^2 \; .
  \end{equation}
  with $ \gamma(n) \to 0 $ as $ n\to +\infty $.  On the other hand, \eqref{313} ensures that for any $ r>0 $, 
   \begin{equation}
\sup_{t\in [0,T_{\frac{1}{2}+}]}  E^{s+r}_{\mu_{n_i}}(\zeta_{n_i}(t),u_{n_i}(t))\lesssim  n_i^{2r} E^{s} _{0}(\zeta_0,u_0)\; .\label{321}
  \end{equation}
  
  Now observing that $ (\eta,v)$ satisfies \eqref{3ldif} with $ (\zeta_i,u_i)=(\zeta_{n_i} ,u_{n_i}) $ and proceeding as in  \eqref{difdif} we eventually get 
  \begin{align}
N^{2s} \frac{d}{dt}E^0_{\mu_{n_1}}(P_N \eta,P_N v)&+2\epsilon  |P_N \eta_x|^2_{H^{s+\lambda/2-1}}
 \lesssim  |u_{n_1}|_{H^{\frac{3}{2}+}} ( |\eta|_{H^{s}}+ |v|_{H^{s}})  \nonumber\\
 & \times N^{s} (|P_N \eta|_{L^2}^2 + |P_N v|_{L^2}^2) +\delta_N N^s |P_N v|_{L^2} |u_{n_2}|_{H^{s+1}}|v|_{H^s} \nonumber \\
 & +\delta_N N^s   |P_N \eta|_{L^2} \Bigl(|v|_{H^{s}}
|\zeta_{n_2}|_{H^{s+1}}+ |v|_{H^{s+1}} |\zeta_{n_2}|_{H^s}  +n_2^{-5}|\partial_t \zeta_{n_2}|_{H^s}\Bigr)  \label{dif3}
\end{align}
But in view of \eqref{313} and  \eqref{li}
$$
 |\zeta_{n_2}|_{L^\infty_{T_s}H^{s+1}} |v|_{L^\infty_{T_s} H^s} \lesssim n_2 \frac{1}{n_2} \gamma(n_2)\tendsto{n_2\to +\infty} 0 
 $$
 and \eqref{ou} yields 
 $$
 \frac{1}{n_2^{4}} |\partial_t \zeta_{n_2}|_{L^\infty_{T_s}H^s} \lesssim \frac{1}{n_2} (1+E_0^s(u_0,\zeta_0)) \; .
 $$
Integrating in time and summing in $ N $, it thus follows that 
  \begin{align}
   | \eta|_{{L^\infty_{T_s}} H^{s}}^2 +\mu_{n_1}  | \eta|_{{L^\infty_{T_s}} H^{s+1}}^2+& | v|_{{L^\infty_{T_s}} H^{s+1}}^2+ 
2\epsilon  |\eta_x|^2_{L^2_{T_s} H^{s+\lambda/2-1}}\nonumber \\
&  \le E^s_{\mu_{n_1}}(\eta(0),v(0))+T_s \tilde{\gamma}(n_2)\nonumber \\
 & \quad+T_s ( 1+ |u_{n_1}|_{L^\infty_{T_s} H^{s+1}} +|u_{n_2}|_{L^\infty_{T_s} H^{s+1}}
 +|\zeta_{n_2}|_{L^\infty_{T_s} H^{s}} )  \nonumber \\
 & \qquad \times ( | \eta|_{{L^\infty_{T_s}} H^{s}}^2 +| v|_{{L^\infty_{T_s}} H^{s+1}}^2)
  \label{difs}
\end{align}
   $ (\zeta,u) \in C([0,T_s]; H^{s}\times H^{s+1}) $. 
Observe that 
$$
E^s_{\mu_{n_1}}(\zeta_{0,n_1}-\zeta_{0,n_2}, u_{0,n_1}-u_{0,n_2})\tendsto{n_1\to +\infty} E^s_0(\zeta_0-\zeta_{0,n_2},u_0-u_{0,n_2})$$
$$\hspace*{7.5cm}=E^s_{0}((1-S_{n_2})\zeta_0,(1-S_{n_2})u_0), 
$$
and thus letting $ n_1 \to +\infty $ in \eqref{difs}  we get
 \begin{align}
   \sup_{t\in [0,T_s]} E^{s}_{0}(\zeta-\zeta_n,u-u_n)(t) &  \lesssim   E^s_{0}((1-S_n)\zeta_0,(1-S_n)u_0) + \tilde{\gamma}(n)  \; ,\label{difh2s}
 \end{align}
with an implicit constant that is independent of $ \epsilon\ge 0 $ and $ \lambda\in ]0,2] $.\\
  $\bullet ${\it Continuity of the flow-map.} 
Let now  $ ((\zeta_{0}^k,u_{0}^k))_{k\ge 1} \subset
H^{s}(\R)\times H^{s+1}(\R) $ be such that
$
(\zeta_{0}^k,u_{0}^k) \rightarrow (\zeta_0,u_0)
$
in $ H^{s}(\R)\times H^{s+1}(\R) $.
We want to prove that the emanating solution $ (\zeta^k,u^k)$  to \eqref{2} tends to $ (\zeta,u)
$ in $ C([0,T_0];H^{s}\times H^{s+1}) $ uniformly in $ \epsilon $ and $ \lambda$. We set $ \zeta_{0,n}^k=S_n \zeta_0^k$ and $ u_{0,n}^k=S_n u_0^k$ and we call $(\zeta_n^k,u_n^k)\in C([0,T_s];H^{s}\times H^{s+1})$ the associated solution to \eqref{3l} with 
 $ \mu=\mu_n=n^{-5}$. 
By the triangle inequality, for $ k $ large enough, it holds  
$$
|u-u^k|_{L^\infty(]0,T_s[; H^{s})} \le
|u-u_n|_{L^\infty(]0,T_s[; H^{s})}+
|u_n-u^k_n|_{L^\infty(]0,T_s[; H^{s})}+
|u_n^k-u^k|_{L^\infty(]0,T_s[; H^{s})} \quad  .
$$
Using the estimate \eqref{difh2s} on the solution to \eqref{3l} we infer that
\begin{align}
  \sup_{t\in [0,T_s]} \Bigl( E^{s}_{0}(\zeta-\zeta_n,u-u_n)(t) &+ E^{s}_{0}(\zeta^k-\zeta^k_n,u^k-u^k_n)(t)\Bigr)\nonumber \\
 &  \lesssim E^s_{0}((1-S_{n})\zeta_0,(1-S_{n})u_0) \nonumber \\
 &\qquad +E^s_{0}((1-S_{n})\zeta_0^k,(1-S_{n})u_0^k)+
 \gamma(n) \label{kak-1}
\end{align}
and thus
\begin{equation}\label{kak1}
\lim_{n\to \infty} \sup_{k\in\N} \Bigl( |u-u_n|_{L^\infty_{T_s}
 H^{s}}+|u^k -u_n^k |_{L^\infty_{T_s} H^s}\Bigr) =0 \, .
\end{equation}
Therefore, it remains to prove that for any fixed $ n \in \N $, 
\begin{equation}
\lim_{k\to +\infty} |u^k -u_n^k |_{L^\infty_{T_s} H^s}=0\label{rem}
\end{equation}
For this we first   notice that \eqref{dif}  with $ \mu_1=\mu_2 $ ensures that
\begin{align}\label{326}
\|u_n-u_n^k\|_{L^\infty(]0,T_s[; H^{s})}^2 &\lesssim   E^{s-1}_{\mu_n} (\zeta_{0,n}-\zeta_{0,n}^k,u_{0,n}-u_{0,n}^k) \nonumber \\
& \lesssim E^{s-1}_{0} (\zeta_{0}-\zeta_{0}^k,u_{0}-u_{0}^k)\; .
\end{align}
and that \eqref{321} leads for $ r\ge 0  $ to 
\begin{equation}\label{327}
\sup_{t\in [0,T_{\frac{1}{2}+}]}  E^{s+r}_{\mu_{n}}(\zeta_{n}^k(t),u_{n}^k(t))\lesssim  n^{2r} E^{s} _{0}(\zeta_{0,n}^k,u_{0,n}^k)
\lesssim  n^{2r} (E^{s} _{0}(\zeta_0,u_0)+1)\; .
\end{equation}
Now, setting $ (\eta,v)=(\zeta_n-\zeta_n^k,u_n-u_n^k) $,  observing that $ (\eta,v)$ satisfies \eqref{3ldif} with $ (\zeta_1,u_1)=(\zeta_{n} ,u_{n}) $,
  $ (\zeta_2,u_2)=(\zeta_{n}^k,u_{n}^k) $ and $ 
\mu_1=\mu_2=n^{-5} $  and proceeding as in \eqref{dif3}  we 
 get 
    \begin{align}
\cro{N}^{2s} \frac{d}{dt}E^0_{\mu_{n}}(P_N \eta,P_N v)&+2\epsilon  |P_N \eta_x|^2_{H^{s+\lambda/2-1}}
 \lesssim  |u_{n}|_{H^{\frac{3}{2}+}} ( |\eta|_{H^{s}}+ |v|_{H^{s}}) \nonumber \\
 & \times \cro{N}^{s} 
 (|P_N \eta|_{L^2}^2 + |P_N v|_{L^2}^2) \nonumber \\
 & +\delta_N \cro{N}^s \Bigl(  |P_N v|_{L^2} |u_{n}^k|_{H^{s+1}}|v|_{H^s} 
 + |P_N \eta|_{L^2}|v|_{H^{s+1}} |\zeta_{n}^k|_{H^s}  \Bigr) \nonumber \\
 & +\delta_N \cro{N}^s  |P_N \eta|_{L^2} |v|_{H^{s}}
|\zeta_{n}^k|_{H^{s+1}}\; .  \label{dif33}
\end{align}
But \eqref{326}-\eqref{327} ensure that 
$$
 |v|_{H^{s}} |\zeta_{n}^k|_{H^{s+1}}\lesssim 
 n \, \Bigl[ (E^{s} _{0}(\zeta_0,u_0)+1)E^{s-1}_{0} (\zeta_{0}-\zeta_{0}^k,u_{0}-u_{0}^k)\Bigr] ^{1/2}\; .
$$ 
Therefore integrating in time and summing in $ N>0 $, it  follows that 
  \begin{align}
   | \eta|_{{L^\infty_{T_s}} H^{s}}^2 +& | v|_{{L^\infty_{T_s}} H^{s+1}}^2  \lesssim E^s_{0}(\eta(0),v(0))+T_s   n^2 (E^{s} _{0}(\zeta_0,u_0)+1) E^{s-1}_{0} (\zeta_{0}-\zeta_{0}^k,u_{0}-u_{0}^k)\nonumber \\
 & +T_s ( 1+ |u_{n}|_{L^\infty_{T_s} H^{s+1}} +|u_{n}^k|_{L^\infty_{T_s} H^{s+1}}
 +|\zeta_{n}^k|_{L^\infty_{T_s} H^{s}} ) 
  ( | \eta|_{{L^\infty_T} H^{s}}^2 +| v|_{{L^\infty_T} H^{s+1}}^2)
  \label{difs33}
\end{align}
which  ensures that 
$$
 | \eta|_{{L^\infty_{T_s}} H^{s}}^2 + | v|_{{L^\infty_{T_s}} H^{s+1}}^2 \lesssim 
  E^s_{0}(\eta(0),v(0))+T_s   n^2 (E^{s} _{0}(\zeta_0,u_0)+1) E^{s-1}_{0} (\zeta_{0}-\zeta_{0}^k,u_{0}-u_{0}^k)
  $$
  and proves \eqref{rem}. Note that this last estimate and \eqref{kak-1} are uniform in $ \epsilon $ and $ \lambda $.
 Combining \eqref{kak1} and \eqref{rem}, we thus obtain
 the continuity of the flow map in  $ C([0,T_s]; H^s\times H^{s+1})$ uniformly in $ \epsilon\ge 0 $ and $ \lambda\in ]0,2]$. Hence the IVP \eqref{2} is locally well-posed with a minimal time of existence $ T_s$ that satisfies \eqref{defTs}.

 Let now  $(\zeta_0,u_0) \in H^s\times H^{s+1} $ and $ T^*_s $ be the maximal time of existence in $ H^s\times H^{s+1} $ of  the emanating solution $(\zeta,u) $. Then proceeding exactly as to obtain \eqref{gf} in the preceding subsection we get for any $0<t_0<t_0+\Delta t<T' <T^*_s$, 
 \begin{align} 
 | \zeta|_{L^\infty(]t_0, t_0+\Delta t[; H^s)}^2 &+ | u|_{L^\infty(]t_0, t_0+\Delta t[; H^{s+1})}^2  \lesssim  E^s (\zeta(t_0),u(t_0))\nonumber \\
 &+ \Delta t  (|u_x|_{L^\infty_{T'x}}+|\zeta|_{L^\infty_{T'x}})  ( | \zeta|_{L^\infty(]t_0, t_0+\Delta t[; H^s)}^2 \nonumber \\
 &+ | u|_{L^\infty(]t_0, t_0+\Delta t[; H^{s+1})}^2)
\label{gfgf}
\end{align}
Therefore, for $ \Delta t \sim (|u_x|_{L^\infty_{T'x}}+|\zeta|_{L^\infty_{T'x}})^{-1} $, it holds 
$$
 | \zeta|_{L^\infty(]t_0, t_0+\Delta t[; H^s)}^2 + | u|_{L^\infty(]t_0, t_0+\Delta t[; H^{s+1})}^2  \lesssim E^s (\zeta(t_0),u(t_0))
 $$
 This proves \eqref{exp} by dividing $[0,T'] $ in small intervals of length $ \Delta t \sim (|u_x|_{L^\infty_{T'x}}+|\zeta|_{L^\infty_{T'x}})^{-1} $.
 
 Finally, \eqref{exp} and Sobolev embeddings  ensure that $ T_s^* =T_{\frac{1}{2}+}^* $ and thus the minimal time of existence in $ H^s\times H^{s+1} $ is bounded from below by $ T_{\frac{1}{2}+} $ that satisfies \eqref{defTs} with $ s=  \frac{1}{2}+$. 
  This completes the proof of  Proposition \ref{essentiel} with $T_0=T_{\frac{1}{2}+}$. 
 \end{proof}
 \subsection{Continuity of the flow-map with respect to the parameter $ \epsilon $.}
 It remains to prove the continuity of the flow-map with respect to the parameter $ \epsilon $  but this a direct consequence of the uniform in $ \epsilon $ LWP. Indeed, 
  let $ \lambda\in ]0,2] $ be fixed and let $(\zeta_0,u_0)\in H^s(\R) \times H^{s+1}(\R) $.  As in the preceding subsection, we set 
  $(\zeta_{0,n},u_{0,n})=(S_n \zeta_0,S_n u_0) $  and we denote by $(\zeta^{\epsilon}_n,u^{\epsilon}_n)\in C([0,T_{\frac{1}{2}+}]; H^s(\R)) $ the associated solution to \eqref{2}. For  $\epsilon\in \R_+ $ we have
\begin{align}
\norm{(\zeta^{\epsilon}-\zeta^{0},u^{\epsilon}-u^{0}}{L^\infty _{T_{\frac{1}{2}+}H^s}}
 & \le \norm{(\zeta^{\epsilon}-\zeta^{\epsilon}_n,u^{\epsilon}-u^{\epsilon}_n}{L^\infty _{T_{\frac{1}{2}+}}H^s} \nonumber \\
 & +  \norm{(\zeta^{\epsilon}_n-\zeta^{0}_n,u^{\epsilon}_n-u^{0}_n)}{L^\infty _{T_{\frac{1}{2}+}}H^s}\nonumber \\
 & +  \norm{(\zeta^{0}-\zeta^{0}_n,u^{0}-u^{0}_n)}{L^\infty _{T_{\frac{1}{2}+}}H^s}
.\label{triangle}
\end{align}
By the continuity of the flow-map uniformly in $ \epsilon\in \R_+$, the first and the third terms in the right-hand side can be made arbitrarily small by taking $ n $ large.
To estimate the second term, we set $(\eta,v)=(\zeta^{\epsilon}_n-\zeta^{0}_n,u^{\epsilon}_n-u^{0}_n)$ and we observe  that  $(\eta,v)$ satisfies 
\begin{eqnarray*}
\left\{
\begin{array}{lcl}
\eta_{t}+v_{x}+(u_n^{\epsilon} \eta)_x+ \epsilon g_\lambda(\eta_n) &=&
(v \zeta^{0}_n)_x-\epsilon g_\lambda(\zeta^{0}_n)
\\
v_t+\eta_x+u_n^\epsilon v_x-v_{xxt}&=& v \partial_x {u_n^0},\\
\end{array} \right.
\end{eqnarray*}
Proceeding as in the obtention of \eqref{difs33} (in particular, making use of \eqref{313}), we obtain for 
 $ 0<T\le T_{\frac{1}{2}+}$,
 \begin{align}
   | \eta|_{{L^\infty_{T}} H^{s}}^2 +& | v|_{{L^\infty_{T}} H^{s+1}}^2  \lesssim E^s_{0}(\eta(0),v(0))+ \epsilon T
    n^{4\lambda} E^{s} _{0}(\zeta_0,u_0)\nonumber \\
 & +T ( 1+n^2) \Bigl( 1+  E^s_{0}(\eta(0),v(0))\Bigr)
  ( | \eta|_{{L^\infty_T} H^{s}}^2 +| v|_{{L^\infty_T} H^{s+1}}^2)
  \label{difs33}
\end{align}
 Noticing that $ \eta(0)=v(0)=0 $  and proceeding as above we then get 
 $$
   | \eta|_{{L^\infty_{T}} H^{s}}^2 + | v|_{{L^\infty_{T}} H^{s+1}}^2
   \lesssim \exp \Bigl[ C\, T( 1+n^2) \Bigr] \epsilon T  n^{4\lambda} E^{s} _{0}(\zeta_0,u_0)
 $$
Taking $\epsilon$ sufficiently close to $0$ according to $n$, we see that the second term in the right-hand side of \eqref{triangle} can be made arbitrarily small.
Therefore, the convergence follows.

\section{A priori estimates and global existence of strong solutions}
In this section, we establish the global existence for any fixed $\epsilon\ge 0$ of (\ref{2}).
This completes the proof of Theorem \ref{maintheorem}.
To obtain the uniform estimates, we proceed as in \cite{Sch} by constructing a convex positive entropy for the associated hyperbolic system
\begin{eqnarray}\label{3}
\left\{
\begin{array}{lcl}
\zeta_t+(u+u\zeta)_x &=&0,\\
u_t+(\zeta+\frac{u^2}{2})_x &=&0.
\end{array} \right.
\end{eqnarray}
Let us we recall the notion of entropy for a hyperbolic system. Consider the system 
\begin{eqnarray}\label{H1}u_t+f(u)_x=0, \end{eqnarray}
where $u=u(t,x) \in \R^n$, $f: \R^n \longrightarrow \R^n$ a smooth function. We say that a pair of  functions 
$\eta, q\; : \R^n\to \R $  is an entropy-entropy flux pair if all smooth solutions of  (\ref{H1}) satisfy the additional conservation law
\begin{eqnarray}\label{H2} \eta(u)_t+q(u)_x=0, \end{eqnarray}
which can also be written
$$ \nabla \eta  u_t + \nabla q u_x=0.$$
On the other hand, multiplying (\ref{H1}) by $\nabla \eta$, we obtain
$$  \nabla \eta u_t+ \nabla\eta \nabla f  u_x =0.$$
This ensures that   the compatibility condition
\begin{eqnarray}\label{H3}\nabla \eta \nabla f=\nabla q, \end{eqnarray}
 forces any smooth solutions of  (\ref{H1}) to satisfy the additional conservation law \eqref{H2}. We define 
$$ w=1+\zeta,\, \sigma(w)= w\ln w,\, \sigma_L(w)=\sigma(1)+\sigma'(1)(w-1)=w-1 $$
and $$\sigma_0(w)=\sigma(w)-\sigma_L(w)=w \ln w -w+1.$$ 
Note that $\sigma_0$ is a convex function on $ ]0,+\infty[ $ and  enjoys the following property.
\begin{lemma} \label{rere}
Let  $ s>1/2 $ be fixed. The functional 
$$
\zeta \mapsto \int_\R\sigma_0(1+\zeta)dx  $$
is well-defined and continuous for the $L^\infty(\R)\cap L^2(\R) $ metric on the subset $\Theta $ of $ H^s(\R)  $ given by
$$
\Theta:=\{\zeta\in H^{s} (\R) , 1+\zeta>0 \; {\rm{on} }\; \R\} \; .
$$
Moreover, there exists  $ C>0 $ such that for all $ \zeta \in \Theta $, 
\begin{equation}\label{boundd}
0 \le  \int_\R\sigma_0(1+\zeta)dx \le C \int_\R \zeta^2 dx \; .
\end{equation}

\end{lemma}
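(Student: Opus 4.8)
The whole statement will follow from the elementary scalar estimate
\[
0\le \sigma_0(1+\zeta)\le C_0\,\zeta^2\qquad\text{for every }\zeta>-1,
\]
with a \emph{universal} constant $C_0>0$. For the lower bound, recall that $\sigma_0'(w)=\ln w$ and $\sigma_0''(w)=1/w>0$ on $]0,+\infty[$, so $\sigma_0$ is convex with a global minimum at $w=1$ where $\sigma_0(1)=0$; hence $\sigma_0\ge 0$. For the upper bound I would study the ratio $h(\zeta):=\sigma_0(1+\zeta)/\zeta^2$ on $]-1,+\infty[\setminus\{0\}$. The second-order Taylor expansion $\sigma_0(1+\zeta)=\tfrac12\zeta^2+o(\zeta^2)$ as $\zeta\to 0$ shows that $h$ extends continuously through $0$ with value $\tfrac12$; as $\zeta\to -1^+$ one has $\sigma_0(1+\zeta)\to\sigma_0(0^+)=1$ and $\zeta^2\to 1$, so $h\to 1$; and as $\zeta\to +\infty$, $\sigma_0(1+\zeta)\sim\zeta\ln\zeta$, so $h\to 0$. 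A continuous function on $]-1,+\infty[$ with finite limits at both endpoints is bounded, which gives the claim with $C_0:=\sup_{\zeta>-1}h(\zeta)$.

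Granting this, the functional is well defined on $\Theta$: if $\zeta\in\Theta\subset H^s(\R)$ with $s>1/2$, then $\zeta\in L^2(\R)$ by Sobolev embedding, $\sigma_0(1+\zeta)\ge 0$ is measurable and $\sigma_0(1+\zeta)\le C_0\zeta^2\in L^1(\R)$; integrating the scalar estimate over $\R$ gives precisely \eqref{boundd}.

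For the continuity, fix $\zeta\in\Theta$ and a sequence $(\zeta_n)\subset\Theta$ with $\zeta_n\to\zeta$ in $L^\infty(\R)\cap L^2(\R)$. Since $1+\zeta$ is continuous, positive and tends to $1$ at $\pm\infty$, we have $m:=\inf_{\R}(1+\zeta)>0$; combined with the $L^\infty$ convergence and the boundedness of $(\zeta_n)$ in $L^\infty$, this shows that for $n$ large the functions $1+\zeta_n$ and $1+\zeta$ take all their values in a fixed compact interval $[a,b]\subset\,]0,+\infty[$. On $[a,b]$ the function $\sigma_0$ is $C^1$, hence Lipschitz with some constant $L$, so $|\sigma_0(1+\zeta_n)-\sigma_0(1+\zeta)|\le L|\zeta_n-\zeta|$ pointwise for $n$ large. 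To conclude I would split $\int_{\R}$ into $\int_{|x|\le R}$ and $\int_{|x|>R}$: on $\{|x|\le R\}$, Cauchy--Schwarz bounds the integral by $L\sqrt{2R}\,\|\zeta_n-\zeta\|_{L^2}$, which tends to $0$ as $n\to\infty$; on $\{|x|>R\}$, one bounds the integrand crudely by $\sigma_0(1+\zeta_n)+\sigma_0(1+\zeta)\le C_0(\zeta_n^2+\zeta^2)$ and uses $\int_{|x|>R}\zeta_n^2\le 2\int_{|x|>R}\zeta^2+2\|\zeta_n-\zeta\|_{L^2}^2$ to make this tail arbitrarily small, uniformly in $n$, by taking $R$ then $n$ large. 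Letting $n\to+\infty$ and then $R\to+\infty$ yields the desired continuity. (Alternatively: $\zeta_n\to\zeta$ in $L^2$ implies that along any subsequence one can extract a further subsequence converging a.e.\ and dominated by a fixed $L^2$ function, whence dominated convergence together with the scalar estimate gives the conclusion directly.)

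The only genuinely non-routine point — and hence the main obstacle — is establishing the scalar inequality with a constant \emph{independent of} $\zeta$, i.e.\ controlling the regime where $\zeta$ approaches $-1$ and $1+\zeta$ approaches the boundary of the domain of $\sigma_0$; this is exactly where the finiteness $\sigma_0(0^+)=1$, together with $\zeta^2\to 1$ there, saves the estimate. Away from $-1$ and away from $0$ the bound $\sigma_0(1+\zeta)\lesssim\zeta^2$ is straightforward (e.g.\ using $\ln w\le w$ for the large-$w$ range), and near $0$ it is nothing but the Taylor expansion of the smooth convex function $\sigma_0$ at its minimum.
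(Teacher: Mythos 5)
Your proof is correct, and its skeleton is essentially the paper's: you use the Sobolev embedding $H^s\hookrightarrow C_0(\R)$ to get $\inf_\R(1+\zeta)>0$, the asymptotics of $\sigma_0(1+\cdot)$ at $0$ and $+\infty$ to get the universal bound $0\le\sigma_0(1+\zeta)\le C_0\zeta^2$ (the paper records this as the two-sided estimate \eqref{estA}), and Lipschitz control of $\sigma_0$ on the range of $1+\zeta$ for the continuity. The one place where the execution genuinely differs is the continuity step: the paper sharpens the derivative bound to $|\sigma_0'(1+z)|=|\ln(1+z)|\le \tfrac{2}{\alpha_0}|z|$ on the relevant range, which yields the pointwise estimate $|\sigma_0(1+\zeta)-\sigma_0(1+\zeta')|\le \tfrac{2}{\alpha_0}\max(|\zeta|,|\zeta'|)\,|\zeta-\zeta'|$ and hence, by a single global Cauchy--Schwarz, the quantitative bound $\bigl|\int\sigma_0(1+\zeta)-\int\sigma_0(1+\zeta')\bigr|\le \tfrac{2}{\alpha_0}(|\zeta|_{L^2}+|\zeta'|_{L^2})|\zeta-\zeta'|_{L^2}$; this also delivers well-definedness for free by taking $\zeta'\equiv 0$. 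You instead use a plain Lipschitz constant $L$ (which is only linear in $|\zeta_n-\zeta|$, hence not globally $L^1$), and compensate with the compact/tail splitting plus the quadratic bound on the tail. Both arguments are valid; the paper's weighted mean-value estimate is slightly slicker and gives an explicit modulus of continuity, while yours is more pedestrian but requires no cleverness in choosing the Lipschitz weight.
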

\begin{proof} Let us fix $\zeta\in \Theta$. 
We first notice that since $s>1/2$, we have $\zeta\in C(\R) $ with $\zeta(x)\to 0 $ as $ |x|\to +\infty $ and thus $1+\zeta $ has got a minimum value  $ \alpha_0 \in ]0,1] $ on $ \R $. Therefore, for  $ \zeta'\in \Theta $ such that $
|\zeta-\zeta'|_{L^\infty} \le \alpha_0/2  $ it holds 
 \begin{equation}
1+\zeta'\ge \min_{\R} (1+\zeta)-\alpha_0/2=\alpha_0/2>0 \; .\label{tu}
\end{equation}
 Now, clearly $  \sigma_0'(1+z)=\ln(1+z) $ and thus $ 0\le \sigma_0'(1+z) \le z $ for $ z\ge 0 $. On the other hand, by the mean-value theorem, 
 for $ z\in [\alpha_0/2-1, 0] $ it holds $ |\ln(1+z)| \le \frac{2}{\alpha_0} |z|$.
  Gathering these two estimates and using again the mean value theorem we thus infer that 
  $$
  |\sigma_0(1+\zeta)-\sigma_0(1+\zeta')| \le \frac{2}{\alpha_0}\max(|\zeta|,|\zeta'|)|\zeta-\zeta'| \; 
 $$
 that yields 
  \begin{equation}\label{difff}
 \Bigl| \int_{\R} \sigma_0(1+\zeta)- \int_{\R} \sigma_0(1+\zeta')\Bigr| \le \frac{2}{\alpha_0} 
 (|\zeta|_{L^2} +|\zeta'|_{L^2}) |\zeta-\zeta'|_{L^2}\; \;.
 \end{equation}
 Taking $ \zeta'\equiv 0 $ we obtain that $ \int_\R\sigma_0(1+\zeta)dx$ is well-defined on $ \Theta$ and 
  the continuity result follows as well  from \eqref{difff}.
  
 Finally, we notice that 
    as $\sigma_0(1+x) \sim x \ln x $ at $ +\infty $ and 
$\sigma_0(1+x) \sim x^2 $ near the origin,  
there exists $ M\ge 1  $ and $c^M_1, \, c^M_2 >0 $ such that  
\begin{eqnarray}
(c_1^M)^{-1} x^2\ge 
\sigma_0(1+x) \ge c^M_1 x^2 \;  &  \text{for} & \; -1<x<M \nonumber \\
\quad \text{and}\quad (c^{M}_2)^{-1} x^2 \ge \sigma_0(1+x) \ge c^{M}_2 x \ln x \ge x   & \text{ for }  &x\ge M \; .\label{estA} 
\end{eqnarray}
This clearly leads to \eqref{boundd}.
\end{proof}
We introduce the Orlicz class associated to the function $\sigma_0(1+\cdot)$ $$\Lambda_{\sigma_0}:= \left\{\zeta\; \mbox{measurable}\;/ \displaystyle \int_{\R}\sigma_0(1+\zeta(x))\,dx<+\infty \right\}, $$ 
with the notation $|\zeta|_{\Lambda_{\sigma_0}}:= \displaystyle{\int_{\R}  \sigma_0(1+\zeta(x))\,dx}.$\\
Now, we shall establish a uniform crucial entropic estimate for our solution.
\begin{proposition}\label{thm1} 
Let $(\zeta_0,u_0)\in  H^s\times H^{s+1}$, for $s>1/2$, and such that $1+\zeta_0>0$. Then, the solution  
$(\zeta,u)\in C([0,T_0]; H^s\times H^{s+1}) $ to \eqref{2}, constructed in Proposition \ref{essentiel}, satisfies 
 $ 1+\zeta(t,x)>0 $ a.e. on $ [0,T_0]\times\R $ with $\zeta\in L^\infty(]0,T_0[; \Lambda_{\sigma_0}) $  and it holds 

\begin{align}
 \frac{1}{2} |u(t)|_{H^1}^2 + \int_{-\infty}^{+\infty}  \sigma_0(1+\zeta(t,x))dx\
  \le  \frac{1}{2} |u_0|_{H^1}^2+ \int_{-\infty}^{+\infty}  \sigma_0(1+\zeta_0(x))dx, \; \forall t\in [0,T_0] .\label{inq1}
 \end{align}
\end{proposition}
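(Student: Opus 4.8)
The plan is to derive the entropy inequality \eqref{inq1} first on the smooth Bona--Smith approximations $(\zeta_n,u_n)$ of \eqref{3l} and then pass to the limit, using the convergences established in Proposition \ref{essentiel} together with the continuity of the Orlicz functional from Lemma \ref{rere}. Concretely, I would first prove that the non-cavitation condition propagates: since $(\zeta_n,u_n)\in C^1([0,T_s];(H^\infty)^2)$, the first equation in \eqref{3l} can be read as a transport equation for $w_n:=1+\zeta_n$ along the flow of $u_n$ with a zeroth-order term and the regularizing perturbation. The genuine obstacle here is the $-\epsilon g_\lambda(\zeta_n)$ term, which is nonlocal and does \emph{not} obey a pointwise maximum principle in the naive sense; however $g_\lambda=|D|^\lambda$ with $\lambda\in]0,2]$ is (up to sign) the generator of a symmetric $\lambda$-stable semigroup, so it satisfies the Córdoba--Córdoba / positivity-preserving type inequality and, more importantly here, $g_\lambda$ applied at a minimum of a function is $\le 0$. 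Evaluating the equation for $\zeta_n$ at a point realizing $\min_x(1+\zeta_n(t,x))$ (which exists and is attained since $\zeta_n\in H^\infty\subset C_0$), one gets a differential inequality of the form $\frac{d}{dt}m_n(t)\ge -C(t)\, m_n(t)$ for $m_n(t):=\min_x(1+\zeta_n(t,x))$, with $C(t)\sim |u_{n,x}|_{L^\infty}$, whence $m_n(t)\ge m_n(0)\exp(-\int_0^t C)>0$ on $[0,T_s]$.

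Next I would establish the entropy identity for the smooth approximations. Following Schonbek, set $w_n=1+\zeta_n$ and use $\sigma_0(w)=w\ln w - w + 1$, $\sigma_0'(w)=\ln w$. Multiplying the first equation of \eqref{3l} by $\sigma_0'(w_n)=\ln w_n$ and the second by $u_n$, integrating in $x$, and exploiting the entropy--entropy flux structure of the hyperbolic part \eqref{3} (so that the flux terms $(u_n+u_n\zeta_n)_x\ln w_n + (\zeta_n+u_n^2/2)_x u_n$ integrate, after integration by parts, to a perfect $x$-derivative and hence vanish), one arrives at
\begin{equation}\label{plan-id}
\frac{d}{dt}\Bigl[\tfrac12|u_n|_{H^1}^2+\int_\R\sigma_0(w_n)\,dx\Bigr]
+\mu\int_\R \sigma_0''(w_n)\,\zeta_{n,xt}\,\zeta_{n,x}\,(\cdots)
+\epsilon\int_\R g_\lambda(\zeta_n)\,\ln w_n\,dx = 0 ,
\end{equation}
where the $\mu$-term comes from $-\mu\zeta_{n,txx}$ and must be controlled, and where the crucial point is the sign of the $\epsilon$-term. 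One checks $\int_\R g_\lambda(\zeta_n)\ln(1+\zeta_n)\,dx\ge 0$: writing $g_\lambda=|D|^\lambda$ as a singular integral with nonnegative kernel and using that $z\mapsto \ln(1+z)$ is nondecreasing, the bilinear form $\int |D|^\lambda(\zeta)\,\Psi(\zeta)\,dx\ge 0$ for any nondecreasing $\Psi$ (this is the standard nonlinear-stability inequality for fractional Laplacians, cf.\ Córdoba--Córdoba). The $\mu$-term I would not fight head-on: since $\mu=\mu_n=n^{-5}\to 0$ and, by \eqref{313}--\eqref{ou}, $|\zeta_{n,xt}|$, $|\zeta_{n,x}|$ are bounded by powers of $n$ on $[0,T_s]$, the whole $\mu_n$-contribution is $O(n^{-2})$ uniformly on $[0,T_s]$ after integration in time — so it is a harmless error $\gamma(n)\to0$.

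Integrating \eqref{plan-id} in time and dropping the two nonnegative terms then yields, for every $n$,
\begin{equation}\label{plan-ineq-n}
\tfrac12|u_n(t)|_{H^1}^2+\int_\R\sigma_0(1+\zeta_n(t))\,dx
\le \tfrac12|u_{0,n}|_{H^1}^2+\int_\R\sigma_0(1+\zeta_{0,n})\,dx + \gamma(n),\qquad t\in[0,T_s].
\end{equation}
Finally I pass to the limit $n\to\infty$. By Proposition \ref{essentiel}, $u_n\to u$ in $C([0,T_0];H^{s+1})$ hence $|u_n(t)|_{H^1}\to|u(t)|_{H^1}$; and $\zeta_n\to\zeta$ in $C([0,T_0];H^{s'})$ for $s'<s$, in particular in $L^\infty\cap L^2$ uniformly in $t$, while $\zeta_{0,n}=S_n\zeta_0\to\zeta_0$ in $H^s\subset L^\infty\cap L^2$. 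Since $m_n(t)\ge m_n(0)e^{-CT_0}$ with $m_n(0)=\min(1+S_n\zeta_0)\to\min(1+\zeta_0)>0$, for $n$ large the $(1+\zeta_n(t))$ stay uniformly bounded below by a fixed $\alpha_0/2>0$; hence the limit satisfies $1+\zeta(t,x)>0$ a.e., each $\zeta(t)\in\Theta$, and the continuity statement in Lemma \ref{rere} gives $\int_\R\sigma_0(1+\zeta_n(t))\to\int_\R\sigma_0(1+\zeta(t))$ and likewise for the data. Passing to the limit in \eqref{plan-ineq-n} produces \eqref{inq1}; the bound \eqref{boundd} of Lemma \ref{rere} together with $\zeta_0\in H^s\subset L^2$ shows the right-hand side is finite, giving $\zeta\in L^\infty(]0,T_0[;\Lambda_{\sigma_0})$. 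The main obstacle is the sign of the nonlocal term $\int g_\lambda(\zeta_n)\ln(1+\zeta_n)\,dx$ and, to a lesser extent, the propagation of positivity under the nonlocal perturbation; both hinge on the same positivity property of $|D|^\lambda$.
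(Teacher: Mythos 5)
Your overall strategy (entropy inequality for smooth approximations, then a limit passage using Proposition \ref{essentiel} and Lemma \ref{rere}) is reasonable, and your treatment of the sign of the nonlocal term via the symmetrized bilinear form $\iint (\zeta(x)-\zeta(y))(\Psi(\zeta(x))-\Psi(\zeta(y)))|x-y|^{-1-\lambda}\,dx\,dy\ge 0$ for nondecreasing $\Psi$ is a legitimate, and arguably more direct, alternative to the paper's route (which uses the convexity inequality $g_\lambda(\sigma_0(w))\le \sigma_0'(w)g_\lambda(w)$ of Lemma \ref{lem1} combined with $\int_\R g_\lambda(\sigma_0(w))\,dx=0$, the latter requiring the propagation of $W^{2,1}$ regularity through the kernel $K_\lambda$). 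However, there is a genuine gap in your non-cavitation step, and it is load-bearing: since you run the entropy computation on the Bona--Smith system \eqref{3l}, you need $1+\zeta_n>0$ (with a lower bound surviving the limit) just to make sense of $\ln(1+\zeta_n)$ and $\sigma_0(1+\zeta_n)$. Your argument for this evaluates the equation at a point realizing $\min_x(1+\zeta_n)$ and uses that $g_\lambda(w)\le 0$ at a global minimum; but the first equation of \eqref{3l} contains the extra term $-\mu\zeta_{n,txx}$, which you silently drop when you ``read it as a transport equation''. Because of this term, $\partial_t\zeta_n$ is $(1-\mu\partial_x^2)^{-1}$ applied to the flux, not the flux itself, and the favorable sign available at the single minimum point is destroyed by the convolution with the Bessel kernel. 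So the differential inequality $\frac{d}{dt}m_n(t)\ge -C(t)\,m_n(t)$ is not justified for \eqref{3l}, and without it the entropy identity \eqref{plan-id} is not even well defined along the approximating sequence.

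The paper avoids this by never performing the entropy computation at the level of \eqref{3l}: it proves \eqref{inq1} directly for the solution of \eqref{2} with smooth data in $(H^\infty\cap W^{2,1})\times H^\infty$ --- where non-cavitation is obtained by an $L^2$ (Stampacchia-type) argument on the negative part $\nu^-=\min(0,\nu)$, $\nu=1+\zeta-m_{0,\alpha}$, in Proposition \ref{minmax}, again powered by Lemma \ref{lem1} --- and then extends to general $(\zeta_0,u_0)\in H^s\times H^{s+1}$ by continuity of the flow map together with the continuity statement of Lemma \ref{rere}. To repair your proof you should either do the same (drop $\mu$ and work with \eqref{2} for regularized data, which also removes the need to estimate the $\mu$-error term in your identity), or establish the lower bound for \eqref{3l} by an $L^2$ negative-part argument rather than a pointwise one. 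As written, the pointwise step fails, and with it the rest of the computation for the approximating sequence.
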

\begin{proof} 
We first assume that  $(\zeta_0,u_0) \in  (H^\infty(\R)\cap W^{2,1}(\R)) \times H^\infty(\R)$. 
According to Proposition \ref{essentiel}, \eqref{2} has got a unique solution $(\zeta,u)\in C([0,T_0]; H^\infty\times H^{\infty}) $ emanating from  $(\zeta_0,u_0)$, where $ T_0 $ only depends on $ |\zeta_0|_{H^{\frac{1}{2}+} }+
  |u_0|_{H^{\frac{3}{2}+}} $.
Then we observe that for $\theta=0,1,2 $,  $\Lambda^\theta \zeta $ verifies  the following integral representation on $ [0,T_0] $
\begin{eqnarray}\label{I1}
\begin{array}{lcl}
\Lambda^\theta\zeta(t,x)&=& \displaystyle \int_{-\infty}^{+\infty} \Lambda^\theta\zeta_0(z)K_\lambda(t,x-z)dz\\  \\&+ & \displaystyle \int_0^t \int_{-\infty}^{+\infty} 
\Lambda^\theta\partial_x \Bigr(u(s,z)+\zeta(s,z)u(s,z)\Bigl) K_\lambda(t-s,x-z))dz ds,
\end{array}
\end{eqnarray}
where $K_\lambda(t,x)= \mathcal{F}^{-1}(e^{-\epsilon t |\cdot|^\lambda})(x)$ is the kernel associated to $g_\lambda$ that satisfies (see \cite{DGV03}):
\begin{eqnarray}\label{I2} 
\|K_\lambda(t,\cdot)\|_{L^1(\R)}=1, \; \mbox{ and }\|\partial_x K_\lambda(t,\cdot)\|_{L^1(\R)}=c_1 (t\epsilon)^{-1/\lambda}, \forall t\in \R.
\end{eqnarray} 
and for $  (t,x) \in ]0;+\infty[ \times \R$
\begin{eqnarray}\label{Ig2} 
K_\lambda(t,x)=\displaystyle \frac{1}{t^{1/\lambda}}K_\lambda(1,\frac{x}{t^{1/\lambda}}),\, \, |K_\lambda(t,x)|\leq \displaystyle \frac{C}{t^{1/\lambda}(1+t^{-2/\lambda}|x|^2)}.
\end{eqnarray}

In particular, we obtain that 
\begin{equation}\label{W11}
\zeta\in L^\infty(]0,T_0[;W^{2,1}(\R)) \; .
\end{equation}

Now we make a change of unknown  by setting $w=1+\zeta$, the system (\ref{2}) becomes
\begin{eqnarray}\label{4}
\left\{
\begin{array}{lcl}
w_t+(uw)_x &=&-\epsilon g_\lambda(w),\\
u_t+(w+u^2/2)_x&=&u_{xxt},
\end{array} \right.
\end{eqnarray}
with initial data $w(0,x)=w_0(x)=1+\zeta_0(x)$ and $u(0,x)=u_0(x).$ The associated hyperbolic system becomes 
\begin{eqnarray}\label{5}
\left\{
\begin{array}{lcl}
w_t+(uw)_x &=&0,\\
u_t+(w+u^2/2)_x&=& 0.
\end{array} \right.
\end{eqnarray}
As in (\ref{H1}), let $f:\R^2 \longrightarrow \R^2$ be defined by 
$$ f(w,u)=(wu, w +u^2/2).$$
Let $\eta(\cdot,\cdot) $ and $q(\cdot,\cdot)$  be a pair of functions satisfying the compatibility condition (\ref{H3}). Then,  setting $ V=(w,u)^T $, the solution of (\ref{4}) satisfies  
\begin{eqnarray}\label{H5} 
\eta(w,u)_t+ q(w,u)_x &= & \nabla \eta(V) V_t +\nabla q(V) V_x\nonumber \\
& = & \nabla \eta\Bigl( V_t + (f(V))_x\Bigr)\nonumber\\
& = &\nabla \eta (V) \Bigl(\epsilon w_{xx}, u_{txx} \Bigr)^T\nonumber \\
& = & \epsilon \eta_w g_\lambda(w)+\eta_u u_{xxt}. \end{eqnarray}
Let $\eta $ be the function of the form
$$ \eta(w,u)=u^2/2+\alpha(w),$$
for some function $\alpha$. Thus, (\ref{H5}) becomes
\begin{eqnarray}\label{H6}\begin{array}{lcl} \eta(w,u)_t+ q(w,u)_x&=&\epsilon \alpha'(w) g_\lambda(w)+u u_{xxt}\\
&=&\epsilon \alpha'(w) g_\lambda(w)-(u_x^2/2)_t+ (uu_{xt})_x . \end{array}\end{eqnarray}
In order to get an a priori estimate on solutions to \eqref{4}, we have to choose $\alpha$ to be a convex function (see  Lemma \ref{lem1} below) and  $ \eta$ to be a convex  and positive function. Since $ u\mapsto u^2/2 $ is convex and positive, it actually suffices to ask $ \alpha $ to be also convex and positive.   At this stage, it is worth noticing that  Proposition \ref{minmax} in the Appendix ensures that $1+\zeta(t)
\ge \displaystyle \min_{\R} (1+\zeta_0) $ on $\R $ for any $ t\in [0,T_0]$. We set $\alpha(w)=\sigma(w) =w\ln w$ that is a convex function on 
 $ ]0,+\infty[ $. 
It is straightforward to check that with this $ \alpha $,  $\eta $ satisfies the compatibility condition \eqref{H3} with the entropy flux given by 
$$
q(w,u)=\alpha(w) u + u w + u^3/3 \; .
$$
Thus we have found an entropy which is convex but not positive. To obtain a positive convex entropy $\eta $, it suffices to substract from $\alpha $ its linear part at $ 1 $ that leads to 
$$
\tilde{\alpha}(w)= \sigma(w) - \sigma'(1) (w-1)= w \ln w +w-1=\sigma_0(w) \; ,
$$
so that the  entropy function becomes
\begin{equation}\label{defen}
\tilde{\eta}(w,u)=u^2/2+ w\ln w+w-1\; .
\end{equation}
Note that, in order for \eqref{H3} to hold, the entropy flux function $ q$ has to be modified consequently and becomes
$$ \tilde{q}(w,u)=q(w,u)-q(1,0)-\sigma'(1)[f(w,u)-f(1,0)]\;,$$
where we choose the constant so that $ \tilde{q}(1,0)=0 $.
Now, by omitting the tilde, the new $\eta$ and $q$ satisfy the equation (\ref{H6}) which will be the starting point of our calculations. As in \cite{Sch}, we consider the space time square
$$ R= \left\{ (s,x)\in \R^2/ 0\leq s\leq t , \, -N\leq x\leq N \right\}, \, \mbox{ for } N  >0.$$
Then  integrating equation (\ref{H6}) over $R$ and using the divergence theorem, we get
\begin{eqnarray*}\begin{array}{lcl} 
& \displaystyle
\int_{-N}^{N}& (\eta(w,u)(t,x)-\eta(w,u)(0,x))dx +\int_0^t (q(w,u)(s,N)-q(w,u)(s,N))ds \\ \\
&=& \displaystyle-\epsilon\int_0^t \int_{-N}^N \alpha'(w) g_\lambda(w)dx ds -\int_{-N}^{N}(u_x^2/2(t,x)- u_x^2/2(0,x))dx \\ \\ \displaystyle & &+\int_0^t (uu_{xt}(s,N)-uu_{xt}(s,-N))  ds.  \\ \\ \end{array}\end{eqnarray*}

Since for any fixed $t\in [0,T_0]$, $\zeta(t)\in H^\infty(\R) $ and  $w(t)=1+\zeta(t) >0$ on $\R $, we deduce that there exists $w_{\min}(t), \,w_{\max}(t)>0$ such that $ w(t)\in[w_{\min}(t),w_{\max}(t)]$. Clearly the mapping $\sigma_0
\, :\, w\mapsto w \ln w +1-w  $ belongs to $C^2([w_{\min}(t),w_{\max}(t)])$ with $ \sigma_0(1)=0 $ and  using  a regular convex and positive extension of $ \sigma_0 $ we can assume that $\sigma_0 $ is a  $C^2(\R)$ convex positive function that belongs to $ W^{2,\infty}(\R)$. Therefore \eqref{W11} ensures that $ \sigma_0(w(t)) \in W^{2,1}(\R) $ for any $ t\in [0,T_0] $.
  Letting $N$ go to $+\infty$ in the above equality, we can thus use  the Lebesgue dominated convergence theorem to get 
\begin{eqnarray}\label{H77}
\begin{array}{lcl}
& &\displaystyle \int _{-\infty}^{+\infty}\eta(w,u)(t,x)dx +\int _{-\infty}^{+\infty}\frac{u_x^2}{2}(t,x) dx\\  \\& &=\displaystyle 
\int _{-\infty}^{+\infty}\eta(w,u)(0,x)+\int _{-\infty}^{+\infty}\frac{u_x^2}{2}(0,x) dx-\epsilon\int_0^t \int_{\R} \sigma_0'(w) g_\lambda(w)dx ds. 
\end{array}
\end{eqnarray}
We need the  following Lemma to conclude. 
\begin{lemma}\label{lem1}
Let $\lambda \in ]0,2[,\varphi \in C_b^2(\R^n) $ and $\alpha \in C^2(\R)$ be a convex function. Then, we have
$$g_\lambda(\alpha(\varphi))\leq \alpha'(\varphi)g_\lambda(\varphi)$$
 \end{lemma}
For the proof of this lemma, see  \cite{DI}.

Now, let us treat the last term in (\ref{H77}). For each $ t\in [0,T_0] $ Lemma \ref{lem1} yields 
$$
g_\lambda(\sigma_0(w(t)))\leq \sigma_0'(w(t))g_\lambda(w(t))\; .
$$
and  (\ref{H77}) leads to 
\begin{eqnarray}\label{H8}
\begin{array}{lcl}
& &\displaystyle \int _{-\infty}^{+\infty}\eta(w,u)(t,x)dx +\int _{-\infty}^{+\infty}\frac{u_x^2}{2}(t,x) dx\\  \\& &\leq\displaystyle 
\int _{-\infty}^{+\infty}\eta(w,u)(0,x)+\int _{-\infty}^{+\infty}\frac{u_x^2}{2}(0,x) dx-\epsilon\int_0^t  \int_{\R}  g_\lambda(\sigma_0(w(s,x))dx ds.
 \end{array}
  \end{eqnarray}
 Now since $ \sigma_0(w(t))\in W^{2,1}$ for each $t\in [0,T_0] $, we get that $g_\lambda(\sigma_0(w(t,\cdot))\in L^1(\R)$. Indeed, this result is direct for $ \lambda=2 $ and for $ 0<\lambda<2 $ it suffices to notice that  
  $$
  P_N g_\lambda(f)={\mathcal F}_x^{-1} (|\cdot|^{\lambda} \phi_N(\cdot) ) \ast f ={\mathcal F}_x^{-1} \Bigl(
  \xi \mapsto \frac{|\xi|^{\lambda}}{\xi^2}  \phi_N(\xi) \Bigr) \ast \partial_x^2 f \; , \quad \forall f\in W^{2,1}(\R) ,
  $$
   where $ \phi_N $ is defined in \eqref{defphi} . It follows that 
   $$
   |P_N g_\lambda(f)|_{L^1} \lesssim \min(N^\lambda |f|_{L^1}, N^{\lambda-2} |\partial_x^2 f|_{L^1})\quad \forall N>0, 
   $$
  and thus 
  $$
  |g_\lambda(f)|_{L^1} \lesssim |\sum_{N>0} |P_N g_\lambda(f)|_{L^1}\lesssim 
 \sum_{0<N<1} N^{\lambda} |f|_{L^1}+  \sum_{N>1} N^{\lambda-2}  |\partial_x^2 f|_{L^1}\lesssim |f|_{W^{2,1}}\; .
 $$
 that proves the desired result. 
 Finally, since
 $$
   \mathcal{F}\Bigl( g_\lambda(\sigma_0(w(t,\cdot))\Bigr)(0)=0
   $$
 this ensures that   
 $$
 \int_{\R}  g_\lambda(\sigma_0(w(s,x))dx=0 , \quad \forall t\in [0,T_0] \; .
 $$ 
  This proves \eqref{inq1} for $(\zeta_0,u_0)\in (H^\infty(\R)\cap W^{2,1}(\R)) \times H^\infty(\R)$. The result for 
   $(\zeta_0,u_0)\in (H^s(\R)\times H^{s+1}(\R)) $ follows by  using the continuity of the flow-map together with { Lemma } \ref{rere}. Note in particular that the continuity in $ C([0,T]; H^{s}) $ of the flow-map associated with $ \zeta $  and Proposition \ref{minmax} (see the appendix) ensure that $ 1+\zeta(t,x) \ge 
 \displaystyle  \min_{x\in\R}(1+\zeta_0(x))>0 $ a.e. on $[0,T_0]\times \R $.
\end{proof}
Next, we state the global well-posedness  result.  
\begin{proposition}\label{prop1} Let  $(\epsilon, \lambda) \in \R_+\times ]0,2] $  and 
 let $(\zeta_0,u_0)\in H^s(\R)\times H^{s+1}(\R)$,  $s>1/2$, such that 
 $1+\zeta_0 >0$ . Then the unique solution $(\zeta,u) $ to  \eqref{2} constructed in Proposition \ref{essentiel} 
  can be extended for all positive times and thus belongs to $ C(\R_+;H^{s}\times H^{s+1})$. Moreover, for any $ T>0 $ there exists a constants $ C_{T,s}>0 $ only depending on $|\zeta_0|_{H^s}$ and $|u_0|_{H^{s+1}}$  such that 
  \begin{equation}
  |\zeta|_{L^\infty(]0,T[;H^s)}+  |u|_{L^\infty(]0,T[;H^{s+1})}\le C_{T,s}
  \end{equation}
 and the flow-map $S_{\epsilon,\lambda} :(\zeta_0,u_0) \longrightarrow (\zeta^{\epsilon,\lambda},u^{\epsilon,\lambda})$ is  continuous  
 from $H^s\times H^{s+1}  $ into $ C([0,T]; H^s(\R)\times H^{s+1}(\R)) $ uniformly in $\epsilon $ and $ \lambda$.
  \end{proposition}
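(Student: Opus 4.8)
The plan is to combine the entropy inequality of Proposition~\ref{thm1}, the minimum principle of Proposition~\ref{minmax}, and the continuation estimate \eqref{exp} of Proposition~\ref{essentiel}. First I would note that the functional $t\mapsto \tfrac12|u(t)|_{H^1}^2+\int_\R\sigma_0(1+\zeta(t))\,dx$ is non-increasing along any solution (this is exactly \eqref{inq1}, and its proof works verbatim on any time interval on which the solution is defined), so it is bounded by its initial value on the whole maximal existence interval $[0,T^*)$. Since $\sigma_0\ge0$ and $|\zeta_0|_{\Lambda_{\sigma_0}}\le C|\zeta_0|_{L^2}^2$ by Lemma~\ref{rere}, this gives $\sup_{[0,T^*)}|u(t)|_{H^1}\le M_0$ with $M_0$ depending only on $|u_0|_{H^1}$ and $|\zeta_0|_{L^2}$, hence $\sup_{[0,T^*)}|u(t)|_{L^\infty}\le CM_0$ by Sobolev embedding. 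Proposition~\ref{minmax} gives in parallel $1+\zeta(t,x)\ge \delta_0:=\min_\R(1+\zeta_0)>0$ a.e.\ on $[0,T^*)\times\R$.

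The heart of the matter is to upgrade these two facts to a bound on $|u_x|_{L^\infty(]0,T[\times\R)}+|\zeta|_{L^\infty(]0,T[\times\R)}$ valid for every $T<T^*$: by \eqref{exp} this rules out any finite‑time blow‑up of the $H^s\times H^{s+1}$ norm. For $u_x$ I would differentiate the second equation in $x$ and solve for the time derivative, getting the zeroth–order nonlocal identity $\partial_t u_x=\bigl(\mathrm{Id}-(1-\partial_x^2)^{-1}\bigr)(\zeta+u^2/2)$; since $(1-\partial_x^2)^{-1}$ is a contraction on $L^\infty$ and maps $L^2$ into $L^\infty$, this controls $|u_x(t)|_{L^\infty}$ by $|u_{0,x}|_{L^\infty}+\int_0^t\bigl(C|\zeta(s)|_{L^\infty}+C(M_0)\bigr)\,ds$, where one also uses the refined entropy consequence $|\zeta(t)|_{L^2}^2\lesssim 1+|\zeta(t)|_{L^\infty}$ extracted from \eqref{estA} together with the minimum principle. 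For $\zeta$ I would use the transport form $w_t+(uw)_x+\epsilon g_\lambda(w)=0$ of the first equation, with $w=1+\zeta>0$: because $g_\lambda$ is order‑preserving for $\lambda\in\,]0,2]$ (its value at a point of maximum of $w$ is $\ge0$), the maximum principle along the characteristics of $u$ yields $\|1+\zeta(t)\|_{L^\infty}\le \|1+\zeta_0\|_{L^\infty}\exp\!\bigl(\int_0^t|u_x(s)|_{L^\infty}\,ds\bigr)$. Closing the resulting coupling between $|u_x|_{L^\infty}$ and $|\zeta|_{L^\infty}$ by a Gronwall/continuity argument in $T$ then produces the desired bound on $[0,T]$, with a constant depending only on $T$, $M_0$, $\delta_0$, $|\zeta_0|_{L^\infty}$ and $|u_{0,x}|_{L^\infty}$, hence ultimately only on $|\zeta_0|_{H^s}$ and $|u_0|_{H^{s+1}}$.

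Once finite–time blow‑up is excluded we get $T^*=+\infty$, and feeding the $L^\infty$ bound back into \eqref{exp} gives, for each $T>0$, $|\zeta|_{L^\infty(]0,T[;H^s)}+|u|_{L^\infty(]0,T[;H^{s+1})}\le C_{T,s}$ of the stated form; together with Remark~\ref{rem2} this yields $(\zeta,u)\in C(\R_+;H^s\times H^{s+1})$. Finally, the continuity of $S_{\epsilon,\lambda}$ from $H^s\times H^{s+1}$ into $C([0,T];H^s\times H^{s+1})$, uniformly in $(\epsilon,\lambda)$, follows by covering $[0,T]$ by finitely many subintervals on each of which the local uniform–in–$(\epsilon,\lambda)$ continuity of Proposition~\ref{essentiel} applies; the number of such subintervals is controlled by the a priori bounds above, whose (harmless) dependence on $(\epsilon,\lambda)$ does not affect the conclusion, and this also completes the proof of Theorem~\ref{maintheorem}.

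The main obstacle is the second step, namely closing the coupled $L^\infty$ estimate for $\zeta$ and $u_x$ on an arbitrarily long time interval: the naive combination of the transport bound (exponential in $\int|u_x|_{L^\infty}$) with the identity for $\partial_t u_x$ (linear in $\int|\zeta|_{L^\infty}$) only gives a bound on a short time interval whose length degenerates. Producing a bound valid on all of $[0,T]$ requires exploiting the precise interplay between the conservative/dissipative structure of the system and the entropy — this is the technical core, inherited from the analysis of Schonbek~\cite{Sch} and Amick~\cite{Ami}.
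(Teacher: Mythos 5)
There is a genuine gap, and you have in fact located it yourself: the second step, which is the heart of the proof, is not carried out. Your coupled system of estimates --- $|u_x(t)|_{L^\infty}\lesssim |u_{0,x}|_{L^\infty}+\int_0^t(1+|\zeta(s)|_{L^\infty})\,ds$ from the nonlocal identity for $\partial_t u_x$, together with $\|1+\zeta(t)\|_{L^\infty}\le \|1+\zeta_0\|_{L^\infty}\exp\bigl(\int_0^t|u_x|_{L^\infty}\bigr)$ from the transport form --- reduces, after substitution, to a differential inequality of the type $\Phi''\lesssim e^{\Phi}$ for $\Phi(t)=\int_0^t|u_x|_{L^\infty}$, which is consistent with finite-time blow-up. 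So this route cannot by itself rule out blow-up on $[0,T]$ for arbitrary $T$, and deferring the closure to ``the precise interplay \dots inherited from Schonbek and Amick'' is precisely the step whose content a proof must supply. Everything before (entropy bound, $H^1$ and hence $L^\infty$ control of $u$, minimum principle) and after (feeding the $L^\infty$ bound into \eqref{exp}, piecing together the uniform continuity of the flow map) matches the paper and is fine.

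The missing idea, which is what the paper actually does, is to avoid any $L^\infty$ maximum principle for $\zeta$ and instead run $L^{N+1}$ energy estimates on the first equation with constants \emph{uniform in the odd integer} $N$. Multiplying by $\zeta^N$ yields \eqref{LM}, and the dangerous term $-\frac{N}{N+1}\int_0^t\int\zeta^{N+1}u_x$ is controlled from above using only a one-sided \emph{lower} bound on $u_x$: from $u_x(t)=u_{0,x}+\int_0^t\zeta+F$ and $\zeta\ge-1$ (the minimum principle) one gets $u_x(t)\ge u_{0,x}-t+F$, with $F$ bounded in $L^\infty\cap L^2$ purely in terms of the entropy and the $H^1$ norm of $u$ --- crucially, \emph{without} any a priori upper bound on $|\zeta|_{L^\infty}$. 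Since $\zeta^{N+1}\ge0$, this lower bound suffices, and the remaining term $-\int_0^t\int\zeta^N u_x$ is handled by H\"older in time. Gronwall then bounds $\sup_t|\zeta(t)|_{L^{N+1}}$ by a constant independent of $N$, and letting $N\to\infty$ produces the $L^\infty$ bound on $\zeta$, from which the $L^\infty$ bound on $u_x$ follows via the same integral representation. It is exactly this exploitation of the sign structure (only a lower bound on $u_x$ is needed, and it is available linearly in $t$) that breaks the exponential coupling your scheme runs into.
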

  \begin{proof}
  According to \eqref{exp}  and the local well-posedness result, it suffices to proves that for any $ T>0 $ there exists $ c_T>0 $ only depending on $ T $, $|u_0|_{H^\frac{3}{2}+}$ and  $|\zeta|_{H^{\frac{1}{2}+}}$ , such that 
  if the solution $(\zeta,u) $ to \eqref{2} belongs to $ C([0,T[;H^s\times H^{s+1})$ then 
 \begin{equation}
 |\zeta|_{L^\infty(]0,T[\times \R)}+|u_x|_{L^\infty(]0,T[\times \R)}\le c_T \; .\label{estest}
 \end{equation}
   We mainly follow the proof of Theorem $1.2$ in \cite{Ami}.  Let $N$ be a positive odd integer, we start by deriving an estimate on 
  $ \sup_{t\in [0,T[} |\zeta(t)|_{L^N} $.
   For this we multiply the first of (\ref{2}) by $\zeta^N$ and integrate with respect to $x$, to get
$$\frac{1}{N+1}\frac{d}{dt}\int_{\R} \zeta^{N+1}+\epsilon \int_{\R} g_\lambda(\zeta)\zeta^N=-\int_{\R}  \zeta^N u_x - \frac{N}{N+1}\int_{\R}  \zeta^{N+1}u_x.
$$
To treat the  term $\epsilon \int g_\lambda(\zeta)\zeta^N$, we use the property of operator $g_\lambda$ in Lemma \ref{lem1} to prove that it is non negative. Note that the convexe function taking here $\alpha(x)=x^{N+1}$. 
Therefore integrating the above identity on $(0,t) $, using that $ N+1$ is an even integer, we get 
\begin{equation} \frac{1}{N+1}|\zeta(t) |_{L^{N+1}}^{N+1}\le  \frac{1}{N+1}|\zeta_0 |_{L^{N+1}}^{N+1}
 -\int_0^t \int_{\R}  \zeta^N u_x - \frac{N}{N+1}\int_0^t \int_{\R}  \zeta^{N+1}u_x.\label{LM}
\end{equation}
Now, we make use of the fact that for any  $ f\in L^2(\R) $ it holds 
 $(1-\partial_x^2)^{-1} f = \frac{1}{2} e^{-|\cdot|} \ast f $ and $ \partial_x^2(1-\partial_x^2)^{-1} f=-f+(1-\partial_x^2)^{-1} f $. Differentiating the second equation of \eqref{2} with respect to $ x$ we thus obtain 
\begin{eqnarray}
u_{tx} &= & \zeta-  \frac{1}{2} \int_{\mathbb{R}} e^{-|\cdot-z|}\zeta\, dz     +\frac{u^2}{2} - \frac{1}{4} \int_{\mathbb{R}} e^{-|\cdot-z|}u^2 (z)dz \nonumber \\
& =& \zeta+f_1+f_2+f_3 \; .\label{ux}
\end{eqnarray}
We would like to estimate the $ L^\infty $ and the $ L^2$-norms of the $f_i $.
The terms with $u $  in the above right-hand side can be easily estimate in the following way 
$$
\Bigl|f_2+f_3 \Bigr|_{L^\infty}
 \le |u|_{L^\infty}^2(\frac{1}{2} + \frac{1}{4} |e^{-|\cdot|}|_{L^1}) \le |u|_{H^1}^2 \; 
$$
and 
$$
\Bigl|f_2+f_3 \Bigr|_{L^2} \le |u|_{L^4}^2+|e^{-|\cdot|} \ast u^2|_{L^2} 
\lesssim  |u|_{L^4}^2\lesssim  |u|_{H^1}^2 \; .
$$
To estimate $f_1$ we will make use of \eqref{estA}.
Denoting by $A(t) $ the measurable set of $ \R $ defined by 
$$
A(t)=\{z\in \R \, ,/\, \zeta(t,z) \ge M\}\; ,
$$
Young's convolution estimates lead to 
\begin{eqnarray*}
 |e^{-|\cdot|}\ast \zeta  |_{L^\infty} & \le &\Bigl|  e^{-|\cdot|}\ast (\zeta \chi_{A^\complement})\Bigr|_{L^\infty} +
 \Bigl|  e^{-|\cdot|}\ast (\zeta \chi_{A})\Bigr|_{L^\infty}\\
& \le  &  |e^{-|\cdot|}|_{L^1} |\zeta \chi_{A^\complement}|_{L^\infty} + |e^{-|\cdot|}|_{L^\infty} |\zeta \chi_{ A}|_{L^1} \\
 & \le & 2M + |\zeta|_{\Lambda_{\sigma_0}}
\end{eqnarray*}
 and 
\begin{eqnarray}\label{esta1}
\begin{array}{lcl}
 |e^{-|\cdot|}\ast \zeta  |_{2} & \le  &  |e^{-|\cdot|}|_{L^1} |\zeta \chi_{A^\complement}|_{L^2} + |e^{-|\cdot|}|_{L^2} |\zeta \chi_{ A}|_{L^1} \\
 & \le & \frac{1}{C^M_1} |\zeta|_{\Lambda_{\sigma_0}}+  |\zeta|_{\Lambda_{\sigma_0}}\lesssim |\zeta|_{\Lambda_{\sigma_0}}.\end{array}
\end{eqnarray}
Integrating \eqref{ux} on $ [0,t] $ we get 
\begin{equation}\label{uh}
u_x(t) = u_{0,x} + \int_0^t \zeta(s) \, ds + F 
\end{equation}
where, according to  the above estimates and   Proposition \ref{thm1}, 
$$
|F(t)|_{L^\infty}+|F(t)|_{L^2}  \lesssim t \Bigl( 1+ |u_0|_{H^{1}}^2+ |\zeta_0|_{\Lambda_{\sigma_0}}\Bigr),
 \quad \forall t\in [0,T[  \; .
$$
Making use of Holder's inequality, this enables to bound  the first  term of the right-hand side member to \eqref{LM} in the following way :
\begin{align}
\Bigl| -\int_0^t & \int_{\R} \zeta^N(s) u_x(s) \, ds \Bigr| = \Bigl|  -\int_{\R}\int_0^t  \zeta^N (u_{0,x}+F) -  \int_0^t \int_{\R} \zeta^N\int_0^s \zeta(\tau) \, d\tau \Bigr| \nonumber \\
& \le     \Bigl( |u_{0,x}|_{L^{N+1}}+ |F|_{L^{N+1}}\Bigr)\int_0^t  |\zeta|_{L^{N+1}}^N(s) \, ds
+ \int_{\R} \int_0^t |\zeta(s)|^N \, ds \int_0^t |\zeta(s)| \, ds\nonumber \\
& \lesssim \Bigl( |u_{0,x}|_{L^{N+1}}+ |F|_{L^{2}}+ |F|_{L^{\infty}}\Bigr) \int_0^t (1+|\zeta(s)|_{L^{N+1}}^{N+1})\, ds
+t \int_0^t \int_{\R} |\zeta(s)|^{N+1} \, ds\nonumber \\
& \lesssim (1+t) \Bigl( 1+ |u_0|_{H^{\frac{3}{2}+}}^2+ |\zeta_0|_{\Lambda_{\sigma_0}}\Bigr)\Bigl(1+\int_0^t |\zeta|_{L^{N+1}}^{N+1}
(s) \, ds\Bigr)
\label{z1}
\end{align}
where in the penultimate  step we perform Holder's inequalities in time.

Finally, since $ \zeta\ge -1 $ on $[0,t] $, \eqref{uh} leads to 
$$
u_x(t) \ge  u_{0,x} - t  + F \; .
$$
Since $ N+1 $ is even, this enables to control the last term of the right-hand side member to \eqref{LM} in the following way :
\begin{eqnarray}
- \frac{N}{N+1}\int_0^t \int_{\R}  \zeta^{N+1} u_x & \le & \Bigl(t+|u_{0,x}|_{L^\infty}+|F|_{L^\infty(]0,t[\times \R)}\Bigr)
 \int_0^t \zeta^{N+1} 
\nonumber \\
& \lesssim & (1+t)  \Bigl( 1+ |u_0|_{H^{\frac{3}{2}+}}^2+ |\zeta_0|_{\Lambda_{\sigma_0}}\Bigr) \int_0^t |\zeta|_{L^{N+1}}^{N+1}\; .\label{z2}
\end{eqnarray}
Gathering \eqref{LM} and \eqref{z1}-\eqref{z2}, we infer that $ \gamma(t) =\int_0^t \int_{\R}|\zeta(s)|^{N+1} ds $
 satisfies the following differential inequality on $]0,T[ $
 \begin{equation}
\frac{d}{dt}  \gamma(t)\lesssim {|\zeta_0|_{L^{N+1}}^{N+1}}+ (1+t)  \Bigl( 1+ |u_0|_{H^{\frac{3}{2}+}}^2+ |\zeta_0|_{\Lambda_{\sigma_0}}\Bigr)
(1+\gamma(t))\; .\label{et}
 \end{equation}
Making use of  Sobolev inequalities and \eqref{boundd},  Gronwall's inequality  ensures that there exists $\tilde{C}_T>0 $ only depending on $ T $, $|u_0|_{H^{\frac{3}{2}+}}$ and 
$ |\zeta_0|_{H^{\frac{1}{2}+}} $ such that $\gamma(t) \le \tilde{C}_T $ on $ [0,T[ $. Then re-injecting this estimate in \eqref{et} we obtain 
$$
\sup_{t\in [0,T[} |\zeta(t)|_{L^{N+1}} \le C_T \; .
$$
Letting $ N\to +\infty $ this proves the  estimate on the first term in  \eqref{estest}.
Finally, the estimate on the second term in \eqref{estest} follows directly from the first one together with \eqref{uh}.\\

The continuity of the flot-map follow directly from Proposition \ref{essentiel}.
  \end{proof}
Finally we can state the following theorem as a consequence of the previous results.
\begin{theorem}\label{thmreg}
Let $s>1/2$. For $(\zeta_0,u_0) \in H^s\times H^{s+1}$ such that $1+\zeta_0 >0$, the classical Boussinesq system (\ref{1}) has a unique solution $(\zeta,u)$ in $C(\R^+,H^s\times H^{s+1} )\cap C^1(\R^+,H^{s-1}\times H^{s} ) $.\\
The  flot-map $S :(\zeta_0,u_0) \longrightarrow (\zeta,u)$ is  continuous  
 from $H^s\times H^{s+1}  $ into $ C(\R^+; H^s(\R)\times H^{s+1}(\R)) $.
\end{theorem}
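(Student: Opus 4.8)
The plan is to read the statement off as the $\epsilon=0$ instance of the results already established in Sections~3 and~4, since system \eqref{2} with $\epsilon=0$ is exactly the classical Boussinesq system \eqref{1}. First I would invoke Proposition~\ref{essentiel} at $\epsilon=0$ (this is legitimate: all the energy estimates of subsection~\ref{31} and of the proof of Proposition~\ref{essentiel} hold uniformly down to $\epsilon=0$, the coupling term $\epsilon\,g_\lambda(\zeta)$ only ever entering with a favorable sign or being dominated): for $(\zeta_0,u_0)\in H^s\times H^{s+1}$ it produces a solution in $C([0,T_0];H^s\times H^{s+1})$, unique in $L^\infty(]0,T_0[;H^s\times H^{s+1})$, with $T_0$ depending only on $|\zeta_0|_{H^{\frac12+}}+|u_0|_{H^{\frac32+}}$, together with continuity of the solution map. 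Then, using the non-cavitation hypothesis $1+\zeta_0>0$, Proposition~\ref{prop1} (again at $\epsilon=0$) extends this solution to all positive times, keeps it in $C(\R_+;H^s\times H^{s+1})$, provides for every $T>0$ the bound $|\zeta|_{L^\infty_TH^s}+|u|_{L^\infty_TH^{s+1}}\le C_{T,s}$ and the continuity of the flow-map from $H^s\times H^{s+1}$ into $C([0,T];H^s\times H^{s+1})$. This already yields existence, uniqueness (in the class $L^\infty_{\mathrm{loc}}$, hence a fortiori in $C$) and continuity of the flow-map.

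It then only remains to promote the solution to $C^1(\R_+;H^{s-1}\times H^s)$, which I would do directly from the equations. The first equation of \eqref{1} gives $\zeta_t=-u_x-(u\zeta)_x$; since $u\in C(\R_+;H^{s+1})$ and $\zeta\in C(\R_+;H^s)$, the product estimate \eqref{prod3} shows that $t\mapsto u(t)\zeta(t)$ is continuous with values in $H^s$, so $u_x+(u\zeta)_x\in C(\R_+;H^{s-1})$, whence $\zeta_t\in C(\R_+;H^{s-1})$ and therefore $\zeta\in C^1(\R_+;H^{s-1})$. For the second equation one writes $(1-\partial_x^2)u_t=-\zeta_x-uu_x=-\zeta_x-\frac12(u^2)_x$; here $\zeta_x\in C(\R_+;H^{s-1})$ and $u^2\in C(\R_+;H^{s+1})$ by \eqref{prod3}, so the right-hand side lies in $C(\R_+;H^{s-1})$, and applying the smoothing operator $(1-\partial_x^2)^{-1}$ gives $u_t\in C(\R_+;H^{s+1})\hookrightarrow C(\R_+;H^s)$. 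Hence $(\zeta,u)\in C^1(\R_+;H^{s-1}\times H^s)$, as claimed.

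I do not expect a real obstacle: the theorem is a corollary of the uniform-in-$(\epsilon,\lambda)$ analysis already carried out, the only points deserving a word of care being (i) that the statements of Propositions~\ref{essentiel} and~\ref{prop1} are genuinely valid at the endpoint $\epsilon=0$ --- which they are, since every constant appearing there is nondecreasing in $\epsilon$ and the dissipative term is never used with an unfavorable sign --- and (ii) upgrading the time regularity of $(\zeta_t,u_t)$ from $L^\infty_{\mathrm{loc}}$ to $C$, which is immediate from the equations, the time-continuity of $(\zeta,u)$ in $H^s\times H^{s+1}$, the product estimates \eqref{prod2}--\eqref{prod3} and the smoothing of $(1-\partial_x^2)^{-1}$. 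Uniqueness in the stated class is, as noted, a consequence of the unconditional uniqueness already proved in Proposition~\ref{essentiel}.
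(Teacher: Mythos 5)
Your proposal is correct and matches the paper's (implicit) argument: the paper presents Theorem \ref{thmreg} precisely as the $\epsilon=0$ instance of Propositions \ref{essentiel} and \ref{prop1}, whose statements are indeed valid for $\epsilon\ge 0$. Your additional verification of the $C^1(\R^+;H^{s-1}\times H^s)$ regularity directly from the equations, using the product estimates and the smoothing of $(1-\partial_x^2)^{-1}$, correctly fills in a detail the paper leaves to the reader (cf.\ Remark \ref{rem2}).
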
 

\section{Global entropy solutions  of the Boussinesq system}
In this section, we study existence of weak solution for the Boussinesq system (\ref{1}) for initial condition  $(\zeta_0,u_0) \in  \Lambda_{\sigma_0}\times H^1$. To do so, we regularize the initial data by a mollifiers sequence $ (\rho_n)_n\subset D(\R)$ by setting  $\zeta_{0,n}=\rho_n* \zeta_0 $ and $u_{0,n}=\rho_n* u_0 $, where $\rho_n(\cdot)=n\rho(n\cdot), \; \rho\in D(\R)$ such that $$0\leq \rho\leq 1, \mbox{ supp } \rho \subset [0,1] \mbox{ and } \displaystyle \int_\R \rho dx=1 .$$  
Note that $(u_{0,n})_n\subset H^s$  and it is  bounded in $H^1$ with $\|u_{0,n}\|_{H^1} \leq \|u_0\|_{H^1}$
 and $ u_{0,n} \to u_0 $ in $H^1(\R) $. For $\zeta_{0,n}$, we first notice that $ \zeta_0\in   \Lambda_{\sigma_0} $ ensures that $ 1+\zeta_0>0 $ a.e. on $ \R $. Since $ 1+\zeta_{0,n}= \rho_n * (1+\zeta_0) $, it follows that $1+\zeta_{0,n}>0 $ on $\R $. Moreover, using  (\ref{estA}) and proceeding exactly as (\ref{esta1}) by replacing  $e^{-|\cdot|}$ by $\rho_n$ it is straightforward to check that  $\zeta_{0,n} \in L^2$ with $|\zeta_{0,n}|_{L^2}\leq  c_n |\zeta_0|_{ \Lambda_{\sigma_0}}$ where $c_n$  depends on $\|\rho_n\|_{L^2}$. Similary, we can verify that $\zeta_{0,n} \in H^s$, for $s\geq 0$.\\
Now,  consider $(\zeta_n, u_n)$ the solution of (\ref{1}) emanating from $(\zeta_{0,n},u_{0,n})$ given by Proposition \ref{prop1}. we will prove that $(\zeta_n,u_n)$ has a subsequence which converges to a weak solution of the Boussinesq system (\ref{1}) with initial data $(\zeta_0,u_0)$. Note that $(\zeta_n,u_n )$ satisfies the entropy estimate (\ref{inq1}) which implies
$$
\|u_n(t)\|_{H^1}^2+ \int_{-\infty}^{+\infty}  \sigma_0(1+\zeta_n)dx 
\leq  \|u_{0,n}\|_{H^1}^2 + \int_{-\infty}^{+\infty}  \sigma_0(1+\zeta_{0,n})dx \; .
$$
The  $ H^1$-convergence of $ (u_{0,n}) $ towards $ u_0 $ ensures that the first term of the above right-hand side converges to $ \|u_0\|_{H^1}^2 $. For the second term one has to work a little more. We follow  \cite{Sch} and use the convexity of $ \sigma_0$ and Jensen inequality to get that 
$$
\sigma_0(1+\zeta_{0,n})=\sigma_0 \Bigl( \int_{\R} (1+\zeta_0)\rho_n (\cdot-z) dz\Bigr) 
\le \int_{\R} \sigma_0(1+\zeta_0) \rho_n (\cdot-z) dz \; .
$$
Therefore, integrating on $ \R $, using Fubini and $\int_{\R} \rho_n =1 $, we obtain 
$$
 \int_{-\infty}^{+\infty}  \sigma_0(1+\zeta_{0,n})dx \le
  \int_{-\infty}^{+\infty}  \sigma_0(1+\zeta_0)dx \; . 
$$
We thus are lead to the following uniform estimate on $ \R_+$ : 
\begin{equation} \label{inq2}
\|u_n(t)\|_{H^1}^2+ \int_{-\infty}^{+\infty}  \sigma_0(1+\zeta_n(t))dx 
\leq  \|u_{0}\|_{H^1}^2 + \int_{-\infty}^{+\infty}  \sigma_0(1+\zeta_{0})dx \; .
\end{equation}
 In the sequel we will make a constant use of the following lemma that can be easily deduced from \eqref{estA} and \eqref{inq2}.
\begin{lemma}\label{cor2}
let $\chi \in L^1(\R)\cap L^\infty(\R)$. Then
\begin{eqnarray}
\int_\R \left | \zeta_n (t,x)\chi(x)\right|dx \leq c,
\end{eqnarray}
for all $t$, where $c$ is independent of $n$. 
\end{lemma}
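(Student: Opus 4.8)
## Proof Proposal for Lemma \ref{cor2}

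The plan is to split the integral according to whether $\zeta_n(t,x)$ is large or small, exactly as one controls the $f_1$-term in the proof of Proposition \ref{prop1}. Fix $t\ge 0$ and recall the threshold $M\ge 1$ and the constants $c_1^M,c_2^M>0$ from \eqref{estA}. Let $A=A_n(t):=\{x\in\R\,:\,\zeta_n(t,x)\ge M\}$ be the "large" set and $A^\complement$ its complement. On $A^\complement$ we have $-1\le \zeta_n(t,x)<M$, so $|\zeta_n(t,x)|\le M$ pointwise; on $A$ we have $\zeta_n(t,x)\ge M\ge 1$, so by the second line of \eqref{estA}, $|\zeta_n(t,x)|=\zeta_n(t,x)\le \zeta_n(t,x)\ln\zeta_n(t,x)\le \sigma_0(1+\zeta_n(t,x))$ (using $\sigma_0(1+y)\ge c_2^M\, y\ln y\ge y$ there).

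With this splitting I would write
\begin{align*}
\int_\R |\zeta_n(t,x)\chi(x)|\,dx
&= \int_{A^\complement} |\zeta_n(t,x)|\,|\chi(x)|\,dx + \int_{A} |\zeta_n(t,x)|\,|\chi(x)|\,dx\\
&\le M\,|\chi|_{L^1} + |\chi|_{L^\infty}\int_{A}\sigma_0(1+\zeta_n(t,x))\,dx\\
&\le M\,|\chi|_{L^1} + |\chi|_{L^\infty}\int_{\R}\sigma_0(1+\zeta_n(t,x))\,dx,
\end{align*}
where in the last step I used that $\sigma_0(1+\cdot)\ge 0$ to extend the integral from $A$ to all of $\R$. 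Now I invoke the uniform entropic bound \eqref{inq2}, which gives
$$
\int_{\R}\sigma_0(1+\zeta_n(t,x))\,dx \le \|u_0\|_{H^1}^2+\int_{\R}\sigma_0(1+\zeta_0)\,dx=:C_0<\infty,
$$
a quantity independent of $n$ and $t$ (finite since $\zeta_0\in\Lambda_{\sigma_0}$ and $u_0\in H^1$). Hence
$$
\int_\R |\zeta_n(t,x)\chi(x)|\,dx \le M\,|\chi|_{L^1} + C_0\,|\chi|_{L^\infty}=:c,
$$
which is the desired bound with $c$ depending only on $\chi$, $M$, $\|u_0\|_{H^1}$ and $|\zeta_0|_{\Lambda_{\sigma_0}}$, and in particular independent of $n$ and $t$.

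There is essentially no obstacle here: the only point requiring a small amount of care is the pointwise inequality $|\zeta_n|\le \sigma_0(1+\zeta_n)$ on the set where $\zeta_n\ge M$, which is immediate from the second estimate in \eqref{estA}, together with the positivity $\sigma_0(1+\cdot)\ge 0$ used to drop the set $A$ in favor of $\R$. The uniformity in $n$ and $t$ comes for free from \eqref{inq2}, which holds for every $n$ and every $t\in\R_+$. One should note that the hypothesis $\chi\in L^1\cap L^\infty$ is used in exactly the two places above ($L^1$ to bound the contribution of the bounded part, $L^\infty$ to pull $\chi$ out of the entropy integral on the large part).
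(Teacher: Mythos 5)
Your proof is correct and follows essentially the same route as the paper: split the integral at the threshold $M$ from \eqref{estA}, bound the contribution of $\{\zeta_n< M\}$ by $M\,|\chi|_{L^1}$ and that of $\{\zeta_n\ge M\}$ by $|\chi|_{L^\infty}$ times the entropy $\int_\R\sigma_0(1+\zeta_n)\,dx$, then invoke the uniform bound \eqref{inq2}. Nothing to add.
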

 \begin{proof} Let $M$ be defined as in (\ref{estA}). Then by \eqref{inq2}, we have
 \begin{eqnarray}
 \begin{array}{lcl}
\displaystyle \int_\R|\zeta_n(t,x)| |\chi(x)|dx&= &\displaystyle \int_{\{x/{-1<\zeta_n(t,x)|\leq M}\}} |\zeta_n(t,x)||\chi(x)|dx\\
&& \quad+\displaystyle \int_{\{x/{\zeta_n(t,x) \geq M}\}}|\zeta_n(t,x)\chi(x)|dx \\
&\leq & M \displaystyle\int_\R |\chi(x)|dx +|\chi|_{L^\infty} \int_\R \sigma_0(1+\zeta_n(t,x))dx \leq c.
\end{array}
\end{eqnarray}
\end{proof}

\begin{proposition}\label{T6} Consider the sequence $(\zeta_n,u_n) $ constructed above for $(\zeta_0,u_0)\in  \Lambda_{\sigma_0}\times H^1$ { (in particular $1+\zeta_0 >0$ {a.e. on $ \R $)}}. Then there exists a subsequence $\left((\zeta_{n_k},u_{n_k})\right)_k$ and $(\zeta,u) \in L^1_{loc}(]0,+\infty[\times \R)\times (L^\infty(]0,+\infty[,H^1(\R))\cap C(\R^+\times\R))$ such that $(\zeta_{n_k})_k$ converges weakly to $\zeta$ in $L^1$ on every compact of $]0,+\infty[ \times \R$ and $(u_{n_k})_k$ converges  weakly-$\ast$ in $L^\infty(]0,+\infty[,L^\infty (\R))$ and strongly, for any $ T>0$ , in   $C([0,T],C(\R))$ (and then in $C([0,T],L^2_{\mbox{loc}}(\R)))$ to $u$.
\end{proposition}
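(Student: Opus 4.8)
The plan is to extract the announced subsequence directly from the uniform bounds furnished by the entropy inequality \eqref{inq2} and Lemma \ref{cor2}, using weak-$L^1$ compactness for $\zeta_n$ and an Arzelà--Ascoli argument (in the spirit of Aubin--Lions) for $u_n$.

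First I would record the uniform bounds. Since $(\zeta_0,u_0)\in\Lambda_{\sigma_0}\times H^1$ makes the right-hand side of \eqref{inq2} finite, $(u_n)$ is bounded in $L^\infty(\R_+;H^1(\R))$, hence in $L^\infty(\R_+\times\R)$ because $H^1(\R)\hookrightarrow L^\infty(\R)$, while $(\zeta_n)$ is bounded in $L^\infty(\R_+;\Lambda_{\sigma_0})$ and therefore, by Lemma \ref{cor2}, in $L^\infty(\R_+;L^1_{\mathrm{loc}}(\R))$. Next I would bound $\partial_t u_n$: from the second equation of \eqref{1},
\[ \partial_t u_n = -(1-\partial_x^2)^{-1}\partial_x\Bigl(\zeta_n+\tfrac{u_n^2}{2}\Bigr), \]
and $(1-\partial_x^2)^{-1}\partial_x$ is convolution against the kernel $G(x)=-\tfrac12\,e^{-|x|}\,\mathrm{sign}(x)\in L^1\cap L^2\cap L^\infty$. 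Since $u_n^2$ is bounded in $L^\infty(\R_+;L^1\cap L^\infty)$ and, splitting $\zeta_n=\zeta_n\chi_{\{\zeta_n< M\}}+\zeta_n\chi_{\{\zeta_n\ge M\}}$ and invoking \eqref{estA} exactly as in the proof of \eqref{esta1} (the first piece bounded in $L^2\cap L^\infty$, the second in $L^1$), Young's convolution inequality gives that $(\partial_t u_n)$ is bounded in $L^\infty(\R_+;L^2(\R)\cap L^\infty(\R))$.

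With these bounds in hand I would treat the two unknowns separately. For $\zeta_n$: on each box $Q=[a,b]\times[-R,R]$ with $0<a<b$, $(\zeta_n)$ is bounded in $L^1(Q)$ by Lemma \ref{cor2} integrated in time, and since $\int_a^b\!\int_{-R}^R\sigma_0(1+\zeta_n)\,dx\,dt\le (b-a)\,C$ while $\sigma_0(1+s)/|s|\to+\infty$ as $|s|\to+\infty$ (by \eqref{estA}), the de la Vallée-Poussin criterion shows $(\zeta_n)$ is uniformly integrable on $Q$; Dunford--Pettis then gives relative weak compactness in $L^1(Q)$, and a diagonal extraction over an exhaustion of $]0,+\infty[\times\R$ by such boxes yields a subsequence with $\zeta_{n_k}\rightharpoonup\zeta$ weakly in $L^1$ on every compact, $\zeta\in L^1_{\mathrm{loc}}$. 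For $u_n$: for each fixed $t$, $(u_n(t))$ is bounded in $H^1(\R)$, hence precompact in $C([-R,R])$ by Rellich's theorem; moreover $\|u_n(t)-u_n(t')\|_{L^2}\lesssim|t-t'|$ by the bound on $\partial_t u_n$, so the interpolation inequality $\|f\|_{H^\sigma}\lesssim\|f\|_{L^2}^{1-\sigma}\|f\|_{H^1}^{\sigma}$ with $\sigma\in(\tfrac12,1)$ together with $H^\sigma(\R)\hookrightarrow C_b(\R)$ gives $\|u_n(t)-u_n(t')\|_{C([-R,R])}\lesssim|t-t'|^{1-\sigma}$ uniformly in $n$. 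Arzelà--Ascoli then makes $(u_n)$ precompact in $C([0,T];C([-R,R]))$ for all $T,R>0$, and a further diagonal extraction produces $u_{n_k}\to u$ in $C([0,T];C(\R))$ for every $T>0$ (locally uniformly in $x$), hence also in $C([0,T];L^2_{\mathrm{loc}}(\R))$, with $u\in C(\R^+\times\R)$. Weak-$*$ lower semicontinuity of the $H^1$-norm gives $u\in L^\infty(\R_+;H^1(\R))$, Banach--Alaoglu gives, along a further subsequence, $u_{n_k}\rightharpoonup^* u$ in $L^\infty(\R_+\times\R)$, the limit being identified with $u$ through the strong local convergence; relabelling yields the claimed subsequence.

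The main obstacle is the uniform time-equicontinuity of $(u_n)$, i.e.\ the bound on $\partial_t u_n$: its only delicate term is the one carrying $\zeta_n$, which is controlled not in $L^2$ but merely in the Orlicz class $\Lambda_{\sigma_0}$, so one must simultaneously exploit that $(1-\partial_x^2)^{-1}\partial_x$ is smoothing (convolution with an $L^1\cap L^2$ kernel) and the quadratic/superlinear dichotomy \eqref{estA} for $\sigma_0$, exactly as was already done for the spatial estimate \eqref{esta1}; the weak-$L^1$ compactness of $(\zeta_n)$ is then the routine consequence of de la Vallée-Poussin and Dunford--Pettis described above.
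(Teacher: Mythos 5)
Your argument is correct and follows essentially the same route as the paper: the entropy bound \eqref{inq2} together with the dichotomy \eqref{estA} gives uniform integrability of $(\zeta_n)$ (the paper checks this by the explicit level-set splitting rather than invoking de la Vall\'ee-Poussin, but this is the same computation) and hence weak $L^1_{loc}$ compactness via Dunford--Pettis, while the bound on $\partial_t u_n$ in $L^\infty_t L^2_x$ is obtained by exactly the same convolution representation and splitting of $\zeta_n$ at the level $M$. The only cosmetic difference is that you replace the citation of the Aubin--Simon compactness theorem by a direct Arzel\`a--Ascoli argument using the interpolation $\|f\|_{H^\sigma}\lesssim\|f\|_{L^2}^{1-\sigma}\|f\|_{H^1}^{\sigma}$ and $H^\sigma\hookrightarrow C_b$ for $\sigma>1/2$, which is a perfectly valid, self-contained substitute.
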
 
{\bf{Proof.}} 
 First, applying the above lemma with $ \chi=1\!\!1_{[-A,A]} $ for $ A>0 $ we obtain that $ (\zeta_n(t))_n $ is bounded in $ L^1_{loc}(\R) $ uniformly in $ t\in\R_+ $. In particular, $(\zeta_n)_n $ is bounded in $L^1_{loc}(\R_+\times\R) $. According to Dunford-Pettis Theorem (see  \cite{DunSchw} Vol I p.294),  to prove that $(\zeta_n)_n $ is weakly compact in $ L^1(]0,T[\times ]-1,A[)$, it suffices to check that for any $ \varepsilon>0 $ there exists $ \delta>0 $ such that for any bounded measurable set 
$B\subset]0,T[\times ]-1,A[ $ with $ |B|<\delta $ it holds 
$$
\sup_{n\in\N} \int_{B} |\zeta_n|(t,x) dx\, dt < \varepsilon \; .
$$
But this follows directly from \eqref{estA} and \eqref{inq2}. Indeed, proceeding as in the proof of the above lemma, we easily check that for any $ k\ge M $ and any $B\subset]0,T[\times ]-1,A[ $,
\begin{align*}
\int_{B} |\zeta_n|(t,x) dx\, dt & \le  \int_{B\cap (-1<\xi_n\le k)}\zeta_n dx\, dt+\int_{B\cap (\zeta_n>k)}\zeta_n dx\, dt \\
& \le k |B| + (C_2^M \ln k)^{-1} \int_0^T \int_{\R} \sigma_0(1+\zeta_n) dx\, dt \\
& \le k |B| + T |\zeta_0|_{\Lambda_{\sigma_0}} (C_2^M \ln k)^{-1}\; ,
\end{align*} 
that clearly gives the desired result by taking $ k $ large enough. }

 Now, let us tackle the strong convergence of $ (u_n)_n $. By (\ref{inq2}), we have that $(u_n)_n$ is bounded in $L^\infty([0,T], H^1(\mathbb R))$. Then, $(u_n)_n$ is bounded in $L^\infty(]0,+\infty[,L^\infty (\R)).$  We deduce that it has a weakly-$\ast$  convergent subsequence in $L^\infty(]0,+\infty[,L^\infty (\R))$. Now,  we will prove that $(\partial_t u_n)_n$ is bounded in $L^\infty(]0,T[, L^2(\mathbb R))$ and after we use a theorem of Aubin-Simon to get the strong convergence. To do so, we recall that by the equation it holds 
$$ \partial_t u_n(t,x)=\int_\mathbb R k(x-z)(\zeta_n+u_n^2/2)(t,z)dz.$$
where $k(x)=\displaystyle \frac{1}{2}\sgn(x)e^{-|x|}$.
Now by (\ref{inq2}) and  Young's convolution estimates, we have
$$\int_\mathbb R k(x-z) u_n^2/2(t,z)dz=|k* u_n^2/2|^2_{L^2}\leq c |k|_{L^1} |u_n^2|_{L^2}\leq cte. $$

For the first integral on $\zeta_n$, we will use, as in \cite{Ami}, the fact that $|\sigma_0(1+\zeta_n)|_{L^1} \leq cte$ given by (\ref{inq2}) and the property of the mapping $\sigma_0(1+\cdot)$ giving in (\ref{estA}). Again denoting by $A_n(t)=\{ x\in \R/ \zeta_n(t,x)\geq M\}$,  with $ M \ge 1 $ as in \eqref{estA},
 we write
$$ \int_\mathbb R k(x-z)\zeta_n(t,z)dz=\int_{A_n^c} k(x-z)\zeta_n(t,z)dz +\int_{A_n} k(x-z)\zeta_n(t,z)dz =f_1+f_2.$$
 By  Young's convolution estimates, (\ref{estA}) and (\ref{inq2}), we get 
$$ |f_1(t,\cdot)|_{L^2}\leq |k|_{L^1} | \zeta_n 1_{A_n^c}|_{L^2}\leq  (C^M_1)^{-1/2} \left(\int_{A_n^c} \sigma_0(1+\zeta_n(t,x))dx\right)^{1/2}\lesssim |\zeta_0|_{\Lambda_{\sigma_0}}^{1/2},$$
$$ |f_2(t,\cdot)|_{L^\infty}\leq \displaystyle \frac{1}{2} \int_{A_{n}}|\zeta_n (t,x)|dx \lesssim |\zeta_0|_{\Lambda_{\sigma_0}}$$
and
$$  |f_2(t,\cdot)|_{L^1}\leq |k|_{L^1} | \zeta_n 1_{A_n}|_{L^1} \lesssim |\zeta_0|_{\Lambda_{\sigma_0}}.$$
Then, we obtain
$$  |f_2(t,\cdot)|_{L^2}\leq  |f_2|_{L^\infty}^{1/2} |f_2|_{L^1}^{1/2}\leq cte.$$
Combining the above estimates, we deduce that $\|\partial_t u_{n}\|_{L^2}$ is  bounded uniformly in $n$ and $t$. \\
Next, we prove that $(u_n)_n$ has a strongly convergent subsequence in $C([0,T]; C(\R))$, i.e. in $C([0,T]; C(K))$ for every compact $K$ of $\R$. For this end, set $K_m=[-m,m]$, by compact Sobolev injection and Aubin-Simon theorem see \cite{S87}, we have that $ E^m_{\infty,\infty}$ is compactly embedded in $ C([0,T], C(K_m))$  where $$ E^m_{\infty,\infty}= \{ u\in L^\infty(]0,T[,H^1(K_m)) \mbox{ such that }\displaystyle \partial_t u \in L^\infty(]0,T[,L^2(K_m))\}.$$ By the preceding calculations, we have proved that $(u_n)_n$ is bounded in $E^m_{\infty\infty}$. We deduce that it has a  subsequence $(u_{n_k}^m)_k$ strongly convergent in $C([0,T],C(K_m))$ $(\mbox{also in }  C([0,T],L^2(K_m)))$. By applying the diagonal extraction processus, we can construct a subsequence $(u_{n_k})_k$ which is strongly convergent in $C([0,T],C(K_m))$, for every $m\geq 1 $ and thus  in $C([0,T],C(K))$ $(\mbox{ and then in }  C([0,T],L^2(K)))$ for every compact $K$ of $\R$.
This completes the proof of the theorem.
Now, the next theorem gives a weak solution of the Boussinesq system.
\begin{theorem}\label{T7}
Let $(\zeta_n,u_n)$ be as in Proposition \ref{T6}. Then, the limit functions $(\zeta,u)$ obtained by Proposition \ref{T6} is a weak solution for the Boussinesq system with initial data $(\zeta_0,u_0)$.
\end{theorem}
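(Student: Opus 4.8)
The goal is to pass to the limit in the weak formulation \eqref{weak} (with $\epsilon=0$), using the convergences already obtained in Proposition \ref{T6}: the weak-$L^1_{loc}$ convergence $\zeta_{n_k}\rightharpoonup\zeta$, the weak-$\ast$ $L^\infty_{t,x}$ convergence of $u_{n_k}$ together with its strong convergence in $C([0,T];C(K))$ and in $C([0,T];L^2_{loc}(\R))$, and the uniform bound \eqref{inq2}. Each $(\zeta_{n_k},u_{n_k})$ is a genuine (in fact smooth, by Theorem \ref{thmreg}) solution to \eqref{1} emanating from $(\zeta_{0,n_k},u_{0,n_k})$, so it satisfies \eqref{weak} with $\epsilon=0$; fixing a test function $\psi\in C_c^\infty(]-T,T[\times\R)$, I would examine each term separately.

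\textbf{The second (velocity) equation.} This one is the easy half. The terms $\int\!\!\int(\psi_t-\psi_{txx}+\psi_x)u_{n_k}$ pass to the limit because $\psi_t-\psi_{txx}+\psi_x$ is a fixed $L^1\cap L^2$ function supported in a compact set, and $u_{n_k}\to u$ strongly in $C([0,T];L^2_{loc})$ (or one may even use weak-$\ast$ convergence against this $L^1$ test function). The nonlinear term $\int\!\!\int\psi_x u_{n_k}^2/2$ passes because on the compact support of $\psi$ one has $u_{n_k}\to u$ uniformly, hence $u_{n_k}^2\to u^2$ uniformly there. The datum term $\int\psi(0,\cdot)u_{0,n_k}\to\int\psi(0,\cdot)u_0$ since $u_{0,n_k}\to u_0$ in $H^1$. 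So $(\zeta,u)$ satisfies the second line of \eqref{weak}.

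\textbf{The first (height) equation.} Here the linear terms $\int\!\!\int(\psi_t+\psi_x)\zeta_{n_k}$ converge by the weak-$L^1_{loc}$ convergence of $\zeta_{n_k}$ against the compactly supported test function $\psi_t+\psi_x$ (the $\epsilon g_\lambda(\psi)$ term is absent since $\epsilon=0$), and $\int\psi(0,\cdot)\zeta_{0,n_k}\to\int\psi(0,\cdot)\zeta_0$ — the latter needs a small argument using that $\psi(0,\cdot)\in C_c^\infty$ and the Orlicz estimate \eqref{estA} (splitting according to $\{\zeta_0\le M\}$ and $\{\zeta_0>M\}$ as in Lemma \ref{cor2}, together with $\zeta_{0,n}\to\zeta_0$ in $L^1_{loc}$), or simply that $\zeta_{0,n_k}\to\zeta_0$ in $L^1_{loc}$. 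The genuinely delicate term is the product $\int\!\!\int\psi_x(\zeta_{n_k}u_{n_k})$: one factor converges only weakly in $L^1_{loc}$, the other only weak-$\ast$ in $L^\infty$, and a product of a weakly convergent and a weak-$\ast$ convergent sequence need not converge to the product of the limits. This is the main obstacle. The way around it is to exploit the \emph{strong} (uniform on compacts) convergence of $u_{n_k}$: write, on the support of $\psi$,
\[
\zeta_{n_k}u_{n_k} - \zeta u = \zeta_{n_k}(u_{n_k}-u) + (\zeta_{n_k}-\zeta)u .
\]
For the first piece, $|\int\!\!\int\psi_x\,\zeta_{n_k}(u_{n_k}-u)|\le \|u_{n_k}-u\|_{L^\infty_{t,x}(\mathrm{supp}\,\psi)}\,\|\psi_x\|_{L^\infty}\int_0^T\!\!\int_{\mathrm{supp}_x\psi}|\zeta_{n_k}|\,dx\,dt$, and the last integral is bounded uniformly in $k$ by Lemma \ref{cor2}, while the sup-norm factor tends to $0$ by the strong convergence in $C([0,T];C(K))$. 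For the second piece, $\psi_x u$ is a fixed function which (on the support of $\psi$) is continuous and bounded, hence lies in $L^1\cap L^\infty$ on that compact set, so $\int\!\!\int(\psi_x u)(\zeta_{n_k}-\zeta)\to 0$ by the weak-$L^1_{loc}$ convergence of $\zeta_{n_k}$ (if one wants $\psi_x u$ smooth one may first approximate it, using the uniform $L^1_{loc}$ bound on $\zeta_{n_k}-\zeta$). Combining all the above, $(\zeta,u)$ satisfies \eqref{weak} with $\epsilon=0$, i.e. it is a weak solution of \eqref{1} with datum $(\zeta_0,u_0)$, which is the assertion of Theorem \ref{T7}. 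I would also remark that the limit inherits the entropy bound \eqref{inq2} (by weak lower semicontinuity of $\zeta\mapsto\int\sigma_0(1+\zeta)$, convexity, and Fatou), so that $(\zeta,u)$ is in fact an \emph{entropy} weak solution, with $u\in L^\infty(\R_+;H^1)\cap C(\R_+\times\R)$ and $\zeta\in L^\infty(\R_+;\Lambda_{\sigma_0})$.
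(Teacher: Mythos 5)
Your passage to the limit in the PDE terms is essentially the paper's own argument: the same splitting $\zeta_{n_k}u_{n_k}-\zeta u=\zeta_{n_k}(u_{n_k}-u)+(\zeta_{n_k}-\zeta)u$, with the first piece controlled by the uniform convergence of $u_{n_k}$ on compacts together with the uniform local $L^1$ bound of Lemma \ref{cor2}, and the second by the weak $L^1$ convergence of $\zeta_{n_k}$ tested against the bounded function $u\psi_x$. Where you genuinely diverge is the treatment of the initial condition. The paper tests against $\varphi\in C_c^\infty(]0,+\infty[\times\R)$, so its distributional identity says nothing at $t=0$, and the attainment of the data is proved separately: $u(0)=u_0$ by a triangle inequality using the uniform convergence in $C([0,T];C(K))$, and the weak convergence $\int\zeta(t_k)\varphi\to\int\zeta_0\varphi$ as $t_k\to 0$ via Dunford's lemma, a diagonal extraction over the times $t_k$, and the bound $|\int(\zeta_k(t_k)-\zeta_{0,k})\varphi|\le c_1 t_k$ extracted from the equation itself. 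You instead allow $\psi(0,\cdot)\neq 0$ and pass to the limit in the boundary terms of \eqref{weak} directly; this is cleaner and avoids the diagonal argument, but it requires the weak $L^1$ convergence of $\zeta_{n_k}$ on sets of the form $]0,T'[\times K$ reaching down to $t=0$, which is slightly more than the statement of Proposition \ref{T6} literally provides (compacts of the \emph{open} set $]0,+\infty[\times\R$). Since the Dunford--Pettis equi-integrability estimate in the proof of Proposition \ref{T6} comes from \eqref{inq2}, which is uniform in $t$, the stronger convergence does hold, but you should make that explicit. With that caveat, your argument is correct, and your closing remark that the limit inherits the entropy bound by convexity and weak lower semicontinuity is a genuine (and welcome) addition not contained in the paper's proof of this theorem.
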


\begin{proof} Let $\varphi\in C_c^\infty(]0,+\infty[\times \R)$. Multiplying (\ref{2}) by $\rho$ and integrating, we obtain 

\begin{eqnarray}\label{iT7}
\int_0^{+\infty}\int_\R\zeta_n \varphi_t dx dt+\int_0^{+\infty}\int_\R(u_n+u_n\zeta_\epsilon )\varphi_x dx dt=0
\end{eqnarray}
and
\begin{eqnarray}\label{iT8}
\int_0^{+\infty}\int_\R u_n \varphi_t dx dt+\int_0^{+\infty}\int_\R(u_n^2/2+\zeta_\epsilon)\varphi_x dx dt- \int_0^{+\infty}\int_\R u_\epsilon \varphi_{xxt} dx dt=0.
\end{eqnarray}
By taking the limit when $n$ tends to infinity, we have to prove that
$$ \int_0^{+\infty}\int_\R\zeta \varphi_t dx dt+\int_0^{+\infty}\int_\R(u+u\zeta )\varphi_x dx dt=0$$
and 
$$\int_0^{+\infty}\int_\R u \varphi_t dx dt+\int_0^{+\infty}\int_\R(u^2/2+\zeta)\varphi_x dx dt- \int_0^{+\infty}\int_\R u \varphi_{xxt} dx dt=0 $$
Since $\varphi$ is with compact support in $]0,+\infty[\times \R$ and $(\zeta_n)_n$ converges weakly in $L^1_{loc}$ to $\zeta$ we obtain
$$ \lim_{n\rightarrow +\infty}\int_0^{+\infty}\int_\R\zeta_n \varphi_tdxdt = \int_0^{+\infty}\int_\R\zeta \varphi_tdxdt.$$
And the strong convergence of $u_n$ to $u$ in $C([0,T],L^2(K))$ implies that 
$$ \lim_{n\rightarrow +\infty}\int_0^{+\infty} \int_\R (u_n-u) \varphi_x dx dt =0.$$
Let $S$ be the support of $\varphi$ and suppose that $ S \subset ]0,T[ \times ]c,d[ \subset ]0,+\infty[\times \R$. Then, we write
$$ \int_0^{+\infty}\int_\R (\zeta_n u_n - \zeta u)\varphi_xdxdt= \int_S \zeta_n(u_n-u)\varphi_x dx dt +\int_Su(\zeta_n-\zeta)\varphi_x dx dt $$
 Since  $u_n\to  u $ in  $ C([0,T];C(K)), $ for every $K$ compact of $\R$ 
and  $ (\zeta_n)_n $ is bounded in $ L^1_{loc} $ we deduce that the first term in the above right-hand side member tends to $0 $ as $n\to +\infty $. Noticing that the limit for second term  follows directly from the weak convergence of $(\zeta_n)_n$ in $ L^1_{loc} $ and the fact that $u\varphi_x \in L^\infty(S)$, we finally obtain
$$\int_0^{+\infty}\int_\R\zeta \varphi_t dx dt+\int_0^{+\infty}\int_\R(u+u\zeta )\varphi_x dx dt=0,$$
which implies that $(\zeta,u)$ satisfies the first equation of (\ref{1}) in the distribution sense.\\ 
For the second equation (\ref{iT8}), the proof is  direct  using the weak convergence of $(\zeta_n)$ and the strong convergence of $(u_n)$. \\ \\
It is still  to prove that the limit $(\zeta,u)$ satisfies the initial data $(\zeta_0,u_0).$  Recall that $(u_n) $ converges to $u$ in $C([0,T],C(K))$ (i.e. in $C([0,T]\times K))$)  for every compact $K$ of $\R$ and that  $u_{0,n}$ converges to $u_0$ in $H^1(\R)$. This enough to implies that $u(0,x)=u_0(x)$ for a.e. $x\in \R$. 
In fact we notice that 
\begin{eqnarray*}
\begin{array}{ll}
\displaystyle || (u(t,\cdot)-u_0(\cdot))||_{\infty,K} \leq   \displaystyle || (u(t,\cdot)-u_n(t,\cdot))||_{\infty,K}  & \\ 
\hspace*{3cm}+ \displaystyle || (u_n(t,\cdot)-u_{0,n}(\cdot))||_{\infty,K} +\displaystyle || (u_{0,n}(\cdot)-u_0(\cdot)))||_{\infty,K}  .&
\end{array}
\end{eqnarray*}
The above convergence results force the first and the third  term of the above right-hand side to converge towards $ 0 $ uniformly in $ t\in [0,T] $ whereas the continuity of $ u_n $ force the second term to tends to $0$ as $ t\searrow 0 $ for each fixed $n\in \N $, 
 and thus  $u(0,x)=u_0(x)$ for a.e. $x\in \R$. \\
Let us now prove that $\zeta$ satisfies the initial condition. For $\varphi \in C^\infty_c(\R)$ and $(t_k)_k \in [0,T]$ converging to $0$, we have
\begin{eqnarray}\label{ic}
\begin{array}{ll}\displaystyle \left|\int_\R (\zeta(t_k,x)-\zeta_0(x))\varphi dx\right| \leq   \displaystyle\left|\int_\R (\zeta(t_k,x)-\zeta_n(t_k,x))\varphi dx\right| &\\ 
\hspace*{2cm}+ \displaystyle \left|\int_\R (\zeta_n(t_k,x)-\zeta_{0,n}(x))\varphi dx\right|+ \left|\int_\R (\zeta_{0,n}(x)-\zeta_0(x))\varphi dx\right|. &
\end{array}
\end{eqnarray}
For the first integral, we proceed as in \cite{Sch}, Claim $4.2$  of Theorem $4.2$]. So by Dunford's lemma and  Lemma \ref{cor2}, for each $t$ there exists a subsequence $(\zeta_n^t)_n$ of $(\zeta_n)_n$ such that 
$$ \lim_{n\rightarrow +\infty} \int_\R (\zeta(t,x)-\zeta^t_n(t,x))\varphi dx =0.$$
Now applying the diagonalization process to $ (\zeta_n^{t_k})_{n,k}$, we can extract a subsequence $(\zeta_k)_k$ such that
$$\lim_{k\rightarrow +\infty} \int_\R (\zeta(t_k,x)-\zeta_k(t_k,x))\varphi dx =0.  $$ 
For the second integral, using the integral representation of $\zeta_k$, we get
\begin{eqnarray} \begin{array}{lcl} \displaystyle \int_\R (\zeta_k(t_k,x)- \zeta_{0,k}(t,x)) \varphi (x) dx&=& -\displaystyle\int_\R \int_0^{t_k} \left((u_k(s,x) +u_k \zeta_k(s,x)\right)_x \varphi(x) dsdx \\ &=&\displaystyle \int_0^{t_k} \int_\R \left((u_k(s,x) +u_k \zeta_k(s,x)\right) \varphi_x(x)dx ds .\end{array} 
\end{eqnarray}
By Lemma \ref{cor2}  it holds 
$$
\begin{array}{ll}
 \left| \displaystyle \int_0^{t_k} \int_\R \left((u_k(s,x) +u_k \zeta_k(s,x)\right) \varphi_x(x)dx ds\right| &\\ 
 \hspace*{2cm} \leq |u_k|_{L^\infty} \displaystyle \int_0^{t_k} \int_\R \left (| \varphi_x(x)| + 
|\zeta_k(s,x)\varphi_x(x)|\right) dx\leq c_1 t_k, &
\end{array}
$$
where $c_1$ is independent of $k$. So for the subsequence $ \zeta_k$, we obtain that 
$$  \displaystyle \left|\int_\R (\zeta_k(t_k,x)-\zeta_{0,k}(x))\varphi dx\right|\leq c_1 t_k. $$
The last integral in (\ref{ic}) goes to $0$ since  $ \zeta_0\in L^1_{loc} $ and thus $(\zeta_{0,k})_k $ converges to $\zeta_0$ in $L^1_{loc}$. Then, by using the subsequence $(\zeta_k)_k$ in (\ref{ic}), we deduce that
$$ \lim _{k\leftrightarrow +\infty} \displaystyle \left|\int_\R (\zeta(t_k,x)-\zeta_0(x))\varphi dx\right|=0. $$ 
\end{proof}

\section{\bf{Appendix}}
\subsection{Proof of Proposition \ref{propcom}}
Let $ N>0$.  We follow \cite{Lanlivre13}. By Plancherel and the mean-value theorem, 
 \begin{align*}
\Bigl| ( [P_N, P_{\ll N}f] g_x)(x)\Bigr| &=\Bigl|( [P_N, P_{\ll N}f] \tilde{P}_N g_x)(x)\Bigr| \\
 & =\Bigl|  \int_{\R} {\mathcal F}^{-1}_x(\varphi_N)(x-y) P_{\ll N}f(y)  \tilde{P}_N g_x(y) \, dy \\ 
 &\qquad -
 \int_{\R}  P_{\ll N}f(x)  {\mathcal F}^{-1}_x(\varphi_N)(x-y) \tilde{P}_N g_x(y) \, dy\Bigr| \\
 & = \Bigl|  \int_{\R}(P_{\ll N}f (y) -P_{\ll N}f(x)) N  {\mathcal F}^{-1}_x(\varphi)(N(x-y))  \tilde{P}_N g_x(y) \, dy\Bigr| \\
& \le \|P_{\ll N}f_x\|_{L^\infty_x}\int_{\R} N |x-y| |{\mathcal F}^{-1}_x(\varphi)(N(x-y))| |\tilde{P}_N g_x(y)| \, dy 
  \end{align*}
  Therefore, since $N |\cdot| |{\mathcal F}^{-1}_x(\varphi)(N \cdot)|=|{\mathcal F}^{-1}_x(\varphi')(N \cdot) | $ we deduce from Young's convolution and Bernstein inequalities that 
 \begin{align*} 
 \Bigl\| [P_N, P_{\ll N}f] g_x)\|_{L^2} & \lesssim  N^{-1} \|P_{\ll N}f_x\|_{L^\infty_x}  \|\tilde{P}_N g_x\|_{L^2} 
 \lesssim  \|P_{\ll N}f_x\|_{L^\infty_x}  \|\tilde{P}_N g\|_{L^2} \; .   \end{align*}
This completes the proof of estimation (\ref{cm1}). Let us prove estimation (\ref{proN1}).  Using Bernstein inequality and the characterization of the Sobolev space, we have
 \begin{equation}\nonumber
 \begin{array}{lcl}
N^s |P_N ( P_{\gtrsim N} f \, g_x)|_{L^2} &\lesssim& N^s \delta_N |P_{\gtrsim N} f \, g_x|_{L^2} \\&\leq& N^s\delta_N |P_{\gtrsim N} f |_{L^2}|g_x|_{L^\infty} \\ &\leq& \delta_N N^s(\sum_{k\gtrsim n}|P_k f|^2_{L^2})^{1/2}|g_x|_{L^\infty} \\ &\leq& \delta_N N^s(\sum_{k\gtrsim n}\delta_k^2 K^{-2s}| f|^2_{H^s})^{1/2}|g_x|_{L^\infty}\\& \leq & \delta_N N^s N^{-s} | f|_{H^s}(\sum_{k\gtrsim n}\delta_k^2 )^{1/2}|g_x|_{L^\infty}\\ &\leq& \delta_N | f |_{H^s}|g_x|_{L^\infty}.
\end{array}
 \end{equation}
Now it remains to prove 
 $$ N^s |P_N ( P_{\gtrsim N} f \, g_x)|_{L^2} \lesssim \delta_N | f |_{H^{s+1}}|g|_{L^\infty}.$$
To do, we write $P_N ( P_{\gtrsim N} f \, g_x)= \partial_xP_N ( P_{\gtrsim N} f \, g)- P_N ( P_{\gtrsim N} f_x \, g)$. The second term can be treated as above to obtain 
 $$N^s |P_N ( P_{\gtrsim N} f_x \, g)|_{L^2} \lesssim  \delta_N | f |_{H^{s+1}}|g|_{L^\infty}.$$
For the first term, we have
\begin{equation}\nonumber
 \begin{array}{lcl}
  N^s |\partial_xP_N ( P_{\gtrsim N} f \, g)|_{L^2}& \lesssim & N^s N |P_N(P_{\gtrsim N} f \, g)|_{L^2} \\& \leq & N^s N \delta_N |P_{\gtrsim N} f \, g|_{L^2} \\ &\leq & N^{s+1}  \delta_N (\sum_{k\gtrsim N}|P_k f|^2_{L^2})^{1/2}|g|_{L^\infty} \\ &\leq& N^{s+1}\delta_N (\sum_{k\gtrsim N}\delta_k^2 K^{-2(s+1)}| f|^2_{H^{s+1}})^{1/2}|g|_{L^\infty}\\ \\ &\leq & N^{s+1}\delta_N N^{-(s+1)}| f|_{H^{s+1}}(\sum_{k\gtrsim N}\delta_k^2 )^{1/2}|g|_{L^\infty}\\ &  \\ &\leq& \delta_N | f|_{H^{s+1}}|g|_{L^\infty}

 \end{array}
 \end{equation}
 \\
Finally to prove estimate (\ref{proN2}), we first notice that it follows directly from \eqref{proN1} for $ s>3/2 $ since $ H^{s-1}(\R) \hookrightarrow L^\infty(\R) $. For $ s\le 3/2 $, we start by noticing that 
$$
P_N(P_{\gtrsim N} f \, g_x) =P_N(P_{\sim N} P_{\lesssim N} g_x)  + P_N(\sum_{K\gtrsim N} P_K f  \tilde{P}_K g_x)\; .
$$
The contribution of the first term of the above  right-hand side  is easily estimated by 
\begin{align*}
N^{s-1} |P_N(P_{\sim N} f  P_{\lesssim N} g_x)|_{L^2} & \lesssim N^{s-1} |P_{\sim N} f |_{L^\infty} N^{2-s} |g|_{H^{s-1}} \\
& \lesssim N  |P_{\sim N} f |_{L^\infty} |g|_{H^{s-1}} \\
& \lesssim \delta_N  |f |_{H^{s+1}} |g|_{H^{s-1}}
\end{align*}
since $ s>1/2 $. 
On the other hand, the contribution of the second term can be estimated by 
\begin{align*}
N^{s-1} \Bigl|P_N(\sum_{K\gtrsim N} P_K f  \tilde{P}_K g_x)\Bigr|_{L^2} & 
\lesssim N^{s-1} N^{1/2}  \Bigl| P_N(\sum_{K\gtrsim N} P_K f  \tilde{P}_K g_x)\Bigr|_{L^1}\\
& \lesssim N^{s-1/2} \sum_{K\gtrsim N} 
K^{-1-s} |P_{\sim K} f |_{H^{s+1}} K^{2-s} |\tilde{P}_K g|_{H^{s-1}} \\
& \lesssim N^{s-1/2} |f |_{H^{s+1}} |g|_{H^{s-1}} \sum_{K\gtrsim N} 
K^{1-2s}\\
& \lesssim N^{1/2-s} |f |_{H^{s+1}}  |g|_{H^{s-1}}
\end{align*}
that is suitable since $s>1/2 $.

  \vspace*{2mm}  
	\subsection{Proof of Proposition \ref{propgl}}  
\begin{proof} 
$$
\begin{array}{ll}
(g_{\lambda}[\Lambda^s f],\Lambda^s f) &=\int \widehat{g_{\lambda}[\Lambda^s f]}\overline{\widehat {\Lambda^s f}}\,d\xi \\
 &= \int |\xi|^{\lambda-2}(1+\xi^2)^{s}|\xi|^2\widehat{f}\overline{\widehat{f}}\,d\xi = \int |\xi|^{\lambda-2}(1+\xi^2)^{s}\widehat{f_x}\overline{\widehat{f_x}}\,d\xi \\
&\geq \int (1+\xi^2)^{\frac{\lambda}{2}-1}(1+\xi^2)^{s}\widehat{f_x}\overline{\widehat{f_x}}\,d\xi=|f_x|^2_{H^{s-1+\lambda/2}}.
\end{array}
$$
\end{proof}
  \begin{proposition}\label{minmax}
Let $\zeta_0 \in H^\infty$ and $ \zeta \in L^\infty(]0,T[,H^\infty{\times  W^{2,1}(\R) )}$ satisfying the first equation of (\ref{2}). If $1+\zeta_0>0$ on $ \R $ then for all $t\in [0,T] $, $1+\zeta(t,x)\ge \displaystyle \min_{\R} (1+\zeta_0)  $ on $\R $.
\end{proposition}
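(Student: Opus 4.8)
The plan is to prove the minimum principle by the method of characteristics applied to the transport equation satisfied by $w = 1+\zeta$. Recall that the first equation of \eqref{2} reads $\zeta_t + u_x + (u\zeta)_x + \epsilon g_\lambda(\zeta) = 0$, which in terms of $w = 1+\zeta$ becomes $w_t + (uw)_x + \epsilon g_\lambda(w) = 0$, i.e. $w_t + u w_x = -(u_x) w - \epsilon g_\lambda(w)$. Here I have used that $g_\lambda(1) = 0$ since the constant has no frequency content away from $\xi = 0$, so $g_\lambda(\zeta) = g_\lambda(w)$. The regularity hypothesis $\zeta \in L^\infty(]0,T[; H^\infty \cap W^{2,1})$ guarantees that $w$, $w_x$, $u$, $u_x$ and $g_\lambda(w)$ are all continuous and bounded on $[0,T]\times\R$, which is what makes the characteristic argument rigorous; note in particular that $g_\lambda(w) = g_\lambda(\zeta)\in H^{s-\lambda}\hookrightarrow C(\R)$ and, by the $W^{2,1}$ control, $g_\lambda(w)$ is bounded.

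First I would define the characteristic curves $X(t;x_0)$ as solutions of $\dot X = u(t,X)$, $X(0) = x_0$; since $u$ is Lipschitz in $x$ uniformly in $t$ (as $u\in H^{s+1}$ with $s>1/2$ gives $u_x\in L^\infty$), these are well-defined and yield a flow of diffeomorphisms of $\R$. Then I would set $W(t) = w(t,X(t;x_0))$ and compute $\dot W = w_t + u w_x = -u_x(t,X)\,W - \epsilon g_\lambda(w)(t,X)$. The key structural fact I need is the \emph{positivity of $g_\lambda$ at a minimum}: if, at time $t$, the spatial minimum of $w(t,\cdot)$ over $\R$ is attained at a point $y$ (which exists because $\zeta(t)\to 0$ at infinity and $\zeta(t)$ is continuous, so $w(t)\to 1$ at infinity and has a global minimum $\le 1$, possibly attained; if $\min w(t,\cdot) \ge 1$ there is nothing to prove), then $g_\lambda(w)(t,y) \le 0$. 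This follows from the integral representation of $g_\lambda$ as a nonnegative kernel times differences: for $0<\lambda<2$, $g_\lambda(w)(y) = c_\lambda \,\mathrm{p.v.}\!\int_\R \frac{w(y)-w(y+h)}{|h|^{1+\lambda}}\,dh$, and at a global minimum $w(y) - w(y+h) \le 0$ for all $h$, hence the integral is $\le 0$; for $\lambda = 2$, $g_2(w) = -w_{xx}$ and at an interior minimum $w_{xx}(y)\ge 0$, again giving $g_2(w)(y)\le 0$.

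The main step is then a standard comparison/continuity argument. Let $m(t) = \inf_{x\in\R} w(t,x) = \min_{x} w(t,x)$ (the min is attained for each $t$ as argued above). I want to show $m(t) \ge m(0)$ for all $t$, equivalently $m(t) \ge \min_\R(1+\zeta_0)$. Fix $\delta>0$ and consider, for $\beta>0$ to be chosen, the comparison: suppose for contradiction that $m(t_1) < m(0)$ for some $t_1$. Let $t_* = \inf\{t : m(t) < m(0)\}$; by continuity of $(t,x)\mapsto w(t,x)$ and the uniform decay at infinity, $m$ is continuous in $t$, so $m(t_*) = m(0)$ and $m$ decreases strictly just after $t_*$. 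Pick a point $y_*$ where $w(t_*,\cdot)$ attains its minimum and follow the characteristic $X(t)$ through $(t_*,y_*)$: along it, $\frac{d}{dt}\big[W(t)\big]\big|_{t=t_*} = -u_x(t_*,y_*) W(t_*) - \epsilon g_\lambda(w)(t_*,y_*)$. The second term is $\ge 0$ by the positivity fact, while the first is bounded below by $-|u_x|_{L^\infty_{t,x}}\,W(t_*)$; combining with the same estimate along the whole characteristic gives $W(t) \ge W(t_*)\exp(-|u_x|_{L^\infty_{t,x}}(t-t_*))$ for $t\ge t_*$ as long as $W$ stays the running minimum, which by a Gronwall-type bootstrap forces $m(t)\ge m(0)e^{-C(t-t_*)}$ — this still allows decay, so to get the sharp bound one should instead argue directly: for any $\epsilon_0>0$ let $h(t) = m(t) + \epsilon_0(1+t)$ wait — cleaner is to note that since the term $-\epsilon g_\lambda$ has the good sign at the minimum, the minimum of $w$ is nondecreasing up to the damping $-u_x w$; more precisely $m$ is a viscosity-type subsolution of $\dot m \ge -\|u_x\|_\infty |m|$, but since we only need $m(t)\ge \min(1+\zeta_0)$ and $w_t + uw_x = -u_x w - \epsilon g_\lambda(w)$, I instead track $\tilde W(t) = W(t)\exp(\int_0^t u_x(s,X(s))\,ds)$, for which $\frac{d}{dt}\tilde W = -\epsilon\,e^{\int_0^t u_x}\,g_\lambda(w)(t,X(t))$; this does not immediately close either.

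The cleanest route, which I would actually carry out, uses the \emph{conservative} form: since $\int_\R w_t\,dx = -\int_\R (uw)_x\,dx - \epsilon\int_\R g_\lambda(w)\,dx = 0$ (both integrals vanish, the first by $w-1\in H^s\cap$ decay and $uw\in L^1$-type control via $W^{2,1}$, the second by $\widehat{g_\lambda(w)}(0)=0$ and $g_\lambda(w)\in L^1$ as shown in the proof of Proposition~\ref{thm1}), mass is conserved; but more to the point, I reduce to the regularized-by-$\mu$ Bona--Smith system \eqref{3l} used to construct $\zeta$, or I simply invoke: for smooth solutions, apply the standard argument that $v := w - m(0)$ satisfies $v_t + u v_x + u_x v = -\epsilon g_\lambda(v)$ (using $g_\lambda(\text{const})=0$), multiply by $(v^-)^{p-1}$ with $v^- = \max(-v,0)$, integrate, use that $\int (v^-)^{p-1} g_\lambda(v)\,dx \ge 0$ (a consequence of Lemma~\ref{lem1} applied to the convex function $\alpha(r)=((r)^-)^p/p$, or directly of the bilinear positivity of $g_\lambda$), and use $|u_x|_{L^\infty}$ control and Gronwall to get $\int (v^-)^p\,dx \equiv 0$, hence $v\ge 0$, i.e. $w\ge m(0)$. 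Letting $p\to\infty$ is not even needed since $v^-\equiv 0$ for one exponent suffices. The main obstacle is making the $g_\lambda$-monotonicity estimate $\int (v^-)^{p-1} g_\lambda(v)\,dx \ge 0$ precise: this is where one invokes Lemma~\ref{lem1} (with $\alpha$ a smoothed convex approximation of $r\mapsto ((r)^-)^p/p$, so that $\alpha'(v) = -((v)^-)^{p-1}$ and $g_\lambda(\alpha(v)) \le \alpha'(v) g_\lambda(v)$, then integrate and use $\int g_\lambda(\alpha(v)) = 0$ as in Proposition~\ref{thm1}), and the regularity $\zeta\in L^\infty(]0,T[;H^\infty\cap W^{2,1})$ is exactly what legitimizes all these integrations by parts and the application of Lemma~\ref{lem1}.
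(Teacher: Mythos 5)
Your final argument (the ``cleanest route, which I would actually carry out'') is essentially the paper's own proof: subtract the constant from $w$, test the resulting equation against (a power of) the negative part, control the nonlocal term by applying Lemma \ref{lem1} to a mollified convex function $\alpha$ whose derivative is the test function together with $\int_\R g_\lambda(\alpha(\cdot))\,dx=0$, and conclude by Gronwall. The paper does exactly this with $p=2$, $\eta(x)=\min(0,x)^2/2$ mollified, and the extra precaution of replacing $\min_\R(1+\zeta_0)$ by $m_{0,\alpha}=\min_\R(1+\zeta_0)\wedge\alpha$, $\alpha<1$, so that $\nu^-$ is compactly supported and lies in $C([0,T];L^2)$ --- a point you do not address. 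The long first part of your proposal (characteristics plus the sign of $g_\lambda$ at a global minimum) is sound as intuition, but as you yourself concede it only yields $m(t)\ge m(0)e^{-Ct}$ because of the damping $-u_xw$, and you abandon it; it plays no role in the proof.

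There is, however, a genuine gap in the route you do carry out: the equation you write for $v=w-m(0)$ is wrong. The first equation of \eqref{2} is in conservative form, so $(u(v+m(0)))_x=(uv)_x+m(0)u_x$ and
\begin{equation*}
v_t+uv_x+u_xv+m(0)\,u_x=-\epsilon g_\lambda(v) \; ;
\end{equation*}
the source term $m(0)u_x$ cannot be dropped --- it is precisely the term $m_{0,\alpha}u_x$ that the paper carries in \eqref{w1} and \eqref{w2}. With this term restored, the energy inequality for the negative part reads (for $p=2$)
\begin{equation*}
\frac{d}{dt}\,|v^-|_{L^2}^2\;\lesssim\; |u_x|_{L^\infty}\,|v^-|_{L^2}^2+m(0)\,|u_x|_{L^2}\,|v^-|_{L^2},
\end{equation*}
and the inhomogeneous term destroys the ``zero initial data implies identically zero'' conclusion that your homogeneous version gives for free: $y(0)=0$ together with $\dot y\le Cy+C'$ does not force $y\equiv 0$. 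So the step ``use $|u_x|_{L^\infty}$ control and Gronwall to get $\int(v^-)^p\,dx\equiv 0$'' is exactly where the argument requires an additional idea to absorb $m(0)\int u_x(v^-)^{p-1}$, and as written it does not close. You should restore this term and explain how it is handled (and note that this is also the delicate point in the paper's own Gronwall step); everything else in your final paragraph --- the use of Lemma \ref{lem1} with a smoothed convex approximation of $r\mapsto((r)^-)^p/p$ and the vanishing of $\int_\R g_\lambda(\alpha(v))\,dx$ via $W^{2,1}$ regularity --- matches the paper and is correct.
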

\begin{proof} Let $ \alpha\in ]0,1[ $. We set  $\nu=1+\zeta -m_{0,\alpha}$, where $0<m_{0,\alpha} =
\displaystyle \min_{\R} (1+\zeta_0)\wedge \alpha \le \alpha< 1$. $\nu$ satisfies the equation 
\begin{eqnarray}\label{w1}
\nu_t+ (\nu u)_x+\epsilon g_\lambda(\nu)+m_{0,\alpha} u_x=0
\end{eqnarray}
Let $\nu^-=\min(0,\nu)$. Note that since for all $t\in [0,T] $, $ \zeta(t,x) \to 0 $ as $ |x|\to +\infty $ and $ \zeta\in C([0,T]; \R) $, there exists $ M>0 $ such that $ \nu^-\equiv 0 $ on $[0,T]\times (\R\setminus [-M,M]) $. This ensures that $ \nu^-\in C([0,T]; L^2)$. Multiplying by $\nu^-$ and integrating over $\R$, we get
\begin{eqnarray}\label{w2}
\displaystyle \frac{1}{2}\frac{d}{dt} \int (\nu^-)^2dx + \frac{1}{2} \int (\nu^-)^2u_x dx+ \epsilon \int g_\lambda(\nu)\nu^- dx +m_0 \int u_x \nu^- dx=0.
\end{eqnarray}
We have to prove that $\int g_\lambda(\nu) \nu^- dx\geq 0$. For this aim, set $\eta(x)=\min (0,x)^2/2$ and let $\eta_\delta=\eta*\varphi_\delta $ where $(\varphi_\delta)_{\delta} $ is a mollifiers sequence. It is easy to see that $\eta_\delta$ is a convex function of class $C^\infty(\R)$ and that  
$$ \displaystyle \int g_\lambda(\nu)\nu^- dx = \int g_\lambda(\nu) \eta'(\nu)=\lim_{\delta\rightarrow 0}\int g_\lambda(\nu) \eta_\delta'(\nu)dx,$$
using the dominated convergence theorem.  Let us check that $ \eta_\delta(\nu)\in  W^{2,1}(\R) $.
 Note that we can write 
$$ \eta_\delta (\nu)= \beta_\delta (\zeta)$$
where $\beta_\delta(\zeta)= (\eta*\varphi)(1+\zeta-m_{0,\alpha})$ and $\beta_\delta\in W^{2,\infty}$. For $\delta$ sufficiently small, it is easy to verify that $\beta_\delta(0)=0$. 
Since $ \xi \in H^\infty(\R) $, $ \xi $ is bounded on $ \R $, and since 
 $\beta_\delta \in W^{2,\infty}(I)$ for any interval $ I\subset \R $, we deduce that 
 $\eta_\delta(\nu)\in W^{2,1} $. 
  Now using Lemma \ref{lem1} we have $$ \int g_\lambda(\nu) \eta_\delta'(\nu)dx\geq \int g_\lambda (\eta_\delta(\nu))dx.$$
Since $ \eta_\delta(\nu)\in W^{2,1}$ we get that $g_\lambda(\eta_\delta(\nu))\in L^1(\R)$ and  since 
 $$
   \mathcal{F}\Bigl( g_\lambda(\eta_\delta(\nu))\Bigr)(0)=0
   $$
 this ensures that   
 $$
 \int_{\R}  g_\lambda(\eta_\delta(\nu))dx=0 \; .
 $$ 
Finally, we obtain 
$$ \displaystyle \frac{1}{2}\frac{d}{dt} \int (\nu^-)^2 dx +\frac{1}{2} \int (\nu^-)^2u_x dx +m_{0,\alpha}\int u_x \nu^- dx =- \epsilon \int g_\lambda(\nu)\nu^- dx $$
$$= \displaystyle - \epsilon\lim_{\delta\rightarrow 0} \int g_\lambda(\nu)\eta_\delta'(\nu) dx \leq- \epsilon\lim_{\delta\rightarrow 0} \int g_\lambda(\eta_\delta(\nu)) dx=0 $$
Thus, we get
$$ \displaystyle \frac{d}{dt} \int (\nu^-)^2 dx \lesssim |\nu^-|_{L^2}|u_x|_{L^\infty} + |\nu^-|_{L^2}|u_x|_{L^2} \lesssim |\nu^-|_{L^2}|u|_{s+1}.$$
By Gronwall Lemma, we have
\begin{eqnarray}\label{ineqm} \int (\nu^-)^2 dx \leq C |\nu_0^-|_{L^2}e^{\int_0^t |u|_{s+1} dt}. \end{eqnarray}
 As $\nu_0^-=0$, we deduce that $\nu \geq 0$ and then $1+\zeta\ge \displaystyle \min_{\R} (1+\zeta_0)\wedge \alpha .$
 Since it holds for any $ \alpha\in]0,1[ $, it ensures that $1+\zeta\ge \displaystyle \min_{\R} (1+\zeta_0)\wedge  1
 =\displaystyle \min_{\R} (1+\zeta_0)$.

\end{proof}
Notice that , as signaled page 15, by using the continuity of the flow map associated with $ \zeta$ ,  Proposition \ref{minmax} still valid for $s>\frac{1}{2}$, $\zeta_0 \in H^s$ and $ \zeta \in L^\infty(]0,T[,H^s)$.

\providecommand{\href}[2]{#2}

\end{document}